\documentclass[10 pt]{amsart}
\usepackage{fullpage, amsthm, amsmath, amsfonts, amssymb, url, mathrsfs, stmaryrd}
\usepackage{pxfonts}

\author{Casey Rodriguez}
\address{Department of Mathematics\\
  University of Chicago\\
  5734 S. University Avenue \\
  Chicago, IL 60637}
\email{c-rod216@math.uchicago.edu}

\numberwithin{equation}{section}

\newcommand{\R}{\mathbb R}

\newcommand{\Z}{\mathbb Z}

\newcommand{\ra}{\rangle}
\newcommand{\la}{\langle}

\newcommand{\rar}{\rightarrow}

\newcommand{\energysp}{\dot{H}^{s_p} \times \dot{H}^{s_p-1}}

\newcommand{\supp}{\mbox{supp }}

\newcommand{\laplace}{\Delta}

\newtheorem{lem}{Lemma}[section]
\newtheorem{thm}[lem]{Theorem}
\newtheorem{ppn}[lem]{Proposition}
\newtheorem{defn}[lem]{Definition}
\newtheorem{clm}[lem]{Claim}

\title{Scattering for radial energy--subcritical \\ wave equations in dimensions 4 and 5}
\date{\today}

\begin{document}

\begin{abstract}
In this paper, we consider the focusing and defocusing energy--subcritical, nonlinear wave equation in $\R^{1+d}$ with radial initial data
for $d = 4, 5$. We prove
that if a solution remains bounded in the critical space on its interval of existence, then the solution exists globally
and scatters at $\pm \infty$. The proof follows the concentration compactness/rigidity method initiated by 
Kenig and Merle, and the main obstacle is to show the nonexistence of 
nonzero solutions with a certain compactness property.  A main novelty of this work is the use of a simple virial argument to rule out the existence
of nonzero solutions with this compactness property rather than channels of energy arguments that have been proven to be most  
useful in odd dimensions. 
\end{abstract}

\maketitle

\section{Introduction}
In this paper we consider the following nonlinear wave equation on $\R^{1+d}$,
\begin{align}\label{nlw}
\left\{
     \begin{array}{lr}
       \partial_t^2 u - \Delta u + \mu |u|^{p-1} u = 0, \quad (t,x) \in \R \times \R^d \\
       \vec u(0) = (u_0,u_1) \in \energysp
     \end{array}
   \right..
\end{align}
Here $\mu \in \{ \pm 1\}$, $s_p = \frac{d}{2} - \frac{2}{p-1}$, and $\vec u(t)$ is the vector
$$
\vec u(t) := (u(t), \partial_t u(t)) \in \energysp.
$$
Throughout this paper, we will often write $u(t)$ instead of the function $u(t,x)$. 

The case $\mu = 1$ is referred to as the \emph{defocusing} case, and the case $\mu = -1$ is referred to as the \emph{focusing} case.  The following 
energy is conserved along the flow for regular enough initial data
\begin{align*}
E(\vec u(t)) := \frac{1}{2} \int |\nabla u(t)|^2 + |\partial_t u(t)|^2 dx + \mu \frac{1}{p+1} \int |u(t)|^{p+1} dx
 = E(\vec u(0)). 
\end{align*}
Note that for regular enough initial data, the energy is nonnegative in the defocusing case $\mu = 1$ but 
can be negative in the focusing case $\mu = -1$. 

The Cauchy problem \eqref{nlw} is \emph{critical} in the space $\energysp$ since the scaling
\begin{align*}
u(t,x) \mapsto \frac{1}{\lambda^{\frac{2}{p-1}}} u \left ( \frac{t}{\lambda^{\frac{2}{p-1}}} 
, \frac{x}{\lambda^{\frac{2}{p-1}}}\right )
\end{align*}
preserves both the equation \eqref{nlw} and the size of the initial data $\| (u_0,u_1) \|_{\energysp}$.  If the initial 
regularity $s_p$ 
lies below the regularity of the conserved energy $E(\vec u)$ (i.e. $s_p < 1$), we say
that \eqref{nlw} is \emph{energy--subcritical}.  If the regularity satisfies $s_p > 1$, then we say that \eqref{nlw}
is \emph{energy--supercritical}.  In the special case $s_p = 1$, we say that \eqref{nlw} is \emph{energy--critical}.

The local well--posedness of \eqref{nlw} is well understood for $3 \leq d \leq 5$ and $s_p \geq 1/2$ (see for example \cite{lind}).  Indeed, using Strichartz estimates and the chain rule for fractional 
derivatives, given any initial data $(u_0, u_1) \in \energysp$, there exists a unique solution $u$ to \eqref{nlw}
defined on a maximal time interval $I_{\max}(u) = (T_-(u), T_+(u))$.  Moreover, if the size of the initial data $\| (u_0, u_1) \|_{\energysp}$
is sufficiently small, then $I_{\max}(u) = \R$ and $u$ scatters, i.e. there exist solutions $v_L^{\pm}$ to the 
free wave equation $\partial_t^2 v_L^{\pm} -
\laplace v_L^{\pm} = 0$ so that
\begin{align}\label{scatter}
\lim_{t \rightarrow \pm \infty} \| \vec u(t) - \vec{v}_L^{\pm}(t) \|_{\energysp} = 0.
\end{align}

Much less is known about the long time asymptotics for \eqref{nlw} for large data.  In the focusing case $\mu = -1$, one can have finite time blow--up
even for smooth compactly supported initial data.  To see this, we note that 
\begin{align*}
v(t) = \left (\frac{2(p+1)}{(p-1)^2} \right )^{\frac{1}{p-1}}  (T-t)^{-\frac{2}{p-1}}
\end{align*}
satisfies $\partial_t^2 v(t) - v(t)^p = 0$ and blows up at $t = T$. Let $\varphi$ be a smooth compactly supported cutoff with 
$\varphi(x) = 1$ for $|x| \leq 2T$, and let $u$ be the solution to \eqref{nlw} with initial data $\varphi \vec v(0) 
\in C^\infty_0(\R^d) \times C^\infty_0(\R^d)$.  By 
finite speed of propagation, 
$$
|x| \leq 2T - |t| \implies u(t,x) = v(t), 
$$  
so that $u$ blows--up in finite time.  In particular, by the Sobolev embedding 
\begin{align*}
\| \vec u(t) \|_{\energysp} \geq \| u(t) \|_{L_x^{d(p-1)/2}} \geq \| v(t) \|_{L_x^{d(p-1)/2}(|x| \leq T)}, 
\end{align*}
we see that $\limsup_{t \rightarrow T} \| \vec u(t) \|_{\energysp} = +\infty$. Such blow--up is referred to as 
\emph{type I} blow--up or ODE blow--up. 

In this work, we consider \emph{type II} solutions to \eqref{nlw}, i.e. solutions $u$ that satisfy
\begin{align*}
\sup_{t \in I_{\max}(u)} \| \vec u(t) \|_{\energysp} < +\infty.
\end{align*}
We now give a very brief review of recent work dedicated to type II solutions to \eqref{nlw}. 

In the energy--critical case $s_p = 1$, there has been extensive work recently on understanding type II behavior.  For the defocusing
case, conservation of the energy automatically implies that any finite energy solution to \eqref{nlw} is a type II solution.
Moreover, it is easy to see by integration by parts that there are no nonzero stationary solutions to \eqref{nlw}, i.e. if $f \in \dot H^1$ solves 
$$
-\Delta f + |f|^{\frac{4}{d-2}} f = 0,
$$
then $f = 0$.  Thus, there is no a priori obstruction to scattering, and indeed it has been proven for $3 \leq d \leq 5$ that 
every finite energy solution to \eqref{nlw} in the defocusing case exists globally and scatters (see \cite{stru}
\cite{gril}). For the focusing case, there is an explicit stationary solution to \eqref{nlw} given by
the ground state
\begin{align*}
W(x) = \left ( 1 + \frac{|x|^2}{d(d-2)} \right )^{-\frac{d-2}{2}}.
\end{align*}
Hence there exists a globally defined type II solution for \eqref{nlw} in the focusing case that does not scatter.  Moreover, 
in the works \cite{krst} \cite{krs} \cite{hillr} \cite{jend} type II finite time blow--up solutions have been constructed for $3 \leq d \leq 5$.  The blow--up
in these works occurs via a concentration of energy at the tip of a light cone through a bubbling off of the ground state $W$.
The bubbling behavior exhibited in these constructions is believed to be characteristic of type II solutions to \eqref{nlw} in the focusing case
that either blow--up in finite time or do not scatter. This asymptotic decoupling of the solution into a sum of dynamically rescaled
ground states and a free radiation term has been proven to hold for \emph{radial} solutions
for all time approaching the final time of existence in the seminal work \cite{dkm4} in $3d$ and along a sequence of times in higher dimensions in 
\cite{ckls} \cite{me} \cite{jia}. 

Type II solutions in the energy--subcritical and supercritical regimes exhibit remarkably different asymptotics.  For the defocusing supercritical wave equation, it has been shown that type II solutions 
are global and scatter for a large variety of powers and dimensions using Morawetz type estimates 
(see for example \cite{km11} \cite{km112} \cite{kill1} \cite{kill2} \cite{bul1} \cite{bul2}).  
In the focusing subcritical/supercritical regime, very little is known.  It was shown in \cite{dkm5} (in $3d$) and in \cite{dodl2} (in $5d$) that radial type II solutions to the focusing supercritical wave equation are global and scatter.  
For the focusing subcritical wave equation, 
it was shown in \cite{shen} (for $s_p \in (1/2,1)$) and \cite{dodl1} (for $s_p = 1/2$) that radial type II solutions in $3d$ are global and scatter.  

In this work, we study radial type II solutions to the energy--subcritical wave equation in dimensions $4$ and $5$.  In particular, 
we show that every radial type II solution is global and scatters.
The precise statement of our main result is as follows.    

\begin{thm}\label{thm1}
Assume that $1/2 \leq s_p < 1$ and $d = 4,5$.  Let $u$ be a radial solution to \eqref{nlw} defined on its maximal interval of existence $I_{\max}(u) = (T_-,T_+)$.  Suppose in addition that
\begin{align}
\sup_{t \in I_{\max}(u)} \| \vec u(t) \|_{\energysp} < +\infty.
\end{align}
Then $u$ is globally defined and scatters, i.e. $I_{\max}(u) = \R$ and there exist radial solutions $v_L^{\pm}$ to the free wave equation $\partial_t^2 v_L^{\pm} -
\laplace v_L^{\pm} = 0$ so that
\begin{align}\label{scatter}
\lim_{t \rightarrow \pm \infty} \| \vec u(t) - \vec v_L^{\pm}(t) \|_{\energysp} = 0.
\end{align}
\end{thm}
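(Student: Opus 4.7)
The plan is to implement the concentration--compactness/rigidity scheme of Kenig and Merle, with the rigidity step accomplished by a virial argument rather than channels--of--energy estimates, as announced in the abstract. Arguing by contradiction, and noting that since $s_p \geq 1/2$ the local well--posedness and nonlinear stability theory in $\energysp$ via Strichartz estimates is available (as recalled in the introduction), define
\[
A_* := \sup\{A > 0 : \text{every radial solution with } \sup_{t}\|\vec u(t)\|_{\energysp} \leq A \text{ is global and scatters}\},
\]
so that small data scattering gives $A_* > 0$ and the hypothesized failure of Theorem \ref{thm1} gives $A_* < +\infty$. Taking radial type II solutions whose critical norms approach $A_*$ while failing to scatter, applying a Bahouri--G\'erard linear profile decomposition to their initial data, and combining with a nonlinear perturbation lemma, should extract a single profile whose nonlinear evolution $u_c$ is a nonzero radial type II solution at the critical level that does not scatter.

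Minimality of $u_c$ then yields a compactness property: there exists a continuous $\lambda : I_{\max}(u_c) \to (0,\infty)$ such that
\[
K := \left\{ \lambda(t)^{-\frac{2}{p-1}}\bigl(u_c(t, \cdot/\lambda(t)),\ \lambda(t)^{-1}\partial_t u_c(t, \cdot/\lambda(t))\bigr) : t \in I_{\max}(u_c)\right\}
\]
is pre--compact in $\energysp$. A standard analysis of the behavior of $\lambda(t)$ near the endpoints of $I_{\max}(u_c)$ should reduce the problem to two scenarios: (a) finite--time blow--up with $\lambda(t) \to \infty$ as $t \to T_+$, and (b) the soliton--like case in which $u_c$ is global and $\lambda(t)$ remains bounded above and below.

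To rule out both scenarios, I would introduce a truncated radial virial functional of the form
\[
V_R(t) := \int \chi\!\left(\tfrac{|x|}{R}\right) \partial_t u_c(t,x) \bigl(x \cdot \nabla u_c(t,x) + \alpha\, u_c(t,x)\bigr)\, dx,
\]
with $\chi$ a smooth compactly supported cutoff and $\alpha$ chosen so that a formal computation of $\tfrac{d}{dt} V_R$ produces, via integration by parts in the Pohozaev spirit, a quantity coercive in the gradient and potential of $u_c$ up to boundary errors supported in $|x| \sim R$. In dimensions $d = 4, 5$, radial Sobolev and Hardy--type estimates should make $V_R(t)$ well defined even though $s_p < 1$, and the pre--compactness of $K$ should supply uniform smallness of the spatial tails modulo the scaling, so that the cutoff errors are $o_R(1)$ uniformly in $t$ in case (b), while in case (a) one can couple $R = R(t)$ to $\lambda(t)$ to absorb the boundary contributions.

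Integrating $\tfrac{d}{dt} V_R$ in time, averaging, and using $\sup_t |V_R(t)| \lesssim R\, \sup_t \|\vec u_c(t)\|_{\energysp}^2$ should then yield an inequality schematically of the form
\[
\frac{1}{T}\int_0^T \bigl(\text{coercive quantity in } \vec u_c(t)\bigr) dt \lesssim \frac{R}{T} + o_R(1),
\]
which on letting $T \to \infty$ and then $R \to \infty$ (or with a coupled choice $R = R(T)$) forces $u_c \equiv 0$ and contradicts the extraction step. The principal obstacle is the rigidity step itself: making the virial identity rigorous at the subcritical regularity $\energysp$, extracting genuine coercivity from the localized Pohozaev--type bulk term, and controlling the cutoff errors uniformly using only the compactness of $K$. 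This is expected to be particularly delicate in the focusing case $\mu = -1$, where the nonlinear potential carries the opposite sign and could in principle cancel the coercive gradient contribution unless pre--compactness is used to rule out ground--state--like concentration at each scale.
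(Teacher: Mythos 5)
Your overall framework (Kenig--Merle concentration--compactness leading to a critical element with the compactness property, followed by a rigidity step based on a truncated virial functional) correctly anticipates the structure of the paper's argument, and your virial functional $V_R$ is of the same form as the quantity $y_R(t)$ the paper ultimately uses. However, there are two genuine gaps in the rigidity step, one of which is likely fatal to the scheme as you describe it.

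First, before any virial identity can even be written down, you must show that the compact solution $u_c$, which a priori lives only in $\energysp$ with $s_p < 1$, actually belongs to $\dot H^1 \times L^2$: the quantities $\int |\nabla u_c|^2$, $\int |\partial_t u_c|^2$, $\int |u_c|^{p+1}$ that appear in $\frac{d}{dt}V_R$ have no meaning at the critical regularity. This is not a matter of ``radial Sobolev and Hardy estimates'' applied generically; it requires a substantive gain of regularity for compact solutions. The paper spends all of Section 3 on this via a frequency--envelope bootstrap together with a double Duhamel argument (decomposing the Duhamel integral with a spatial cutoff $\varphi(x/t)$ and exploiting decay of the truncated free propagator kernel, plus the weak vanishing of the linear part from Lemma \ref{lem1}), arriving at the quantitative bound $\|\vec u_c(t)\|_{\dot H^1 \times L^2} \lesssim N(t)^{1-s_p}$ (Proposition \ref{prop2}), and then upgrading further to $\dot H^{1+\epsilon}\times \dot H^\epsilon$ uniformly in $t$ in the soliton case (Proposition \ref{prop4}). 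Your proposal does not contain this step, and without it neither the virial functional nor the energy are well defined.

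Second, your dichotomy and the plan to dispose of the non--soliton case by coupling $R$ to $\lambda(t)$ will not close. The paper's dichotomy is $\liminf_{t\to+\infty} N(t)=0$ (frequency cascade, which subsumes backward self--similar blow--up) versus $N(t)\equiv 1$. In the frequency cascade case the trajectory is pre--compact only \emph{modulo scaling}, so the tail--control function $\rho(R)$ that makes the boundary errors $o_R(1)$ uniformly in $t$ simply does not exist; the virial argument as you set it up has no way to absorb the cutoff errors when the concentration scale escapes to zero, and attempting $R = R(t)$ changes the linear--in--$R$ bound on $|V_R(t)|$ in a way that does not produce a contradiction. The paper handles the frequency cascade by an entirely different mechanism that exploits subcriticality $s_p<1$ directly: since $\|\vec u_c(t)\|_{\dot H^1\times L^2}\lesssim N(t)^{1-s_p}\to 0$ along a sequence, interpolation shows the conserved energy $E(\vec u_c)$ satisfies $E(\vec u_c)\leq 0$, and then a blow--up theorem for nonpositive--energy subcritical solutions (Proposition \ref{prop3}, from Killip--Stovall--Visan) forces either $u_c\equiv 0$ or a finite lifespan, contradicting $T_+=+\infty$. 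Your virial strategy is appropriate and essentially matches the paper in the soliton case $N(t)\equiv 1$, but you need the regularity upgrade first, and you need the separate energy/blow--up argument for the cascade case.
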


We remark that the restriction on $s_p$ and $d$ is only for the purpose of having a readily available well--posedness theory.  However,
much of our work is independent of regularity and dimension.  Moreover, the methods in this work give a unified proof of 
scattering in the subcritical regime $1/2 \leq s_p < 1$ in $3d$ which was shown in the works \cite{shen} ($1/2 < s_p < 1$) and 
\cite{dodl1} (for $s_p = 1/2$).  These works used different methods for $s_p > 1/2$ and $s_p = 1/2$ that were specific to the $3d$ setting (see the discussion below for more
details).   

The proof follows the powerful concentration compactness/rigidity method initiated by Kenig and Merle in \cite{km06} and \cite{km08}, and the outline
is as follows.  The proof proceeds by contradiction.  Using the local well--posedness theory for \eqref{nlw} and the profile decompositions of Bahouri--Gerard \cite{bahger},
it is now standard to show that the failure of Theorem \ref{thm1} implies that there exists a \emph{nonzero} radial solution $u$ to \eqref{nlw}
with $I_{\max}(u) = (T_-,+\infty)$ and a continuous function $N : (T_-, +\infty) \rightarrow (0,+\infty)$ such that the trajectory
\begin{align}
K = \left \{ \left ( \frac{1}{N(t)^{\frac{2}{p-1}}} u\left (t, \frac{\cdot}{N(t)} \right ) ,
\frac{1}{N(t)^{\frac{2}{p-1}+1}} \partial_t u\left (t, \frac{\cdot}{N(t)} \right )  \right ) : t \in I \right \}
\end{align}
is pre--compact in $\energysp$.  Such solutions are said to have the compactness property. 
 Moreover, we can without loss of generality assume that either $\liminf_{t \rightarrow +\infty}
N(t) = 0$ (frequency cascade case) or $I_{\max} = \R$ and $N(t) \equiv 1$ (soliton--like case). The majority of the paper is 
dedicated to showing that in either case, we must
have $u \equiv 0$ which is a contradiction.

In order to be able to use the conservation of energy or monotonicity identities to show that $u \equiv 0$, we first show that $\vec u(t)$ has
more regularity than $\energysp$. In fact we show that $\vec u(t)$ is  
in $\dot H^1 \times L^2$ for all $t$.  This step uses the ``double Duhamel trick'' that exploits the compactness of $K$ and
is a refinement of the analogous procedure from \cite{dodl1} for the $3d$ cubic equation.  This method 
is rooted in the work by Tao \cite{taosolo} and was used in the study of nonlinear Schr\"odinger equations
\cite{killnls1} \cite{killnls2} and semilinear wave equations \cite{bul1} \cite{bul2} \cite{klls1}.  The bound in time on the $\dot H^1 \times
 L^2$ norm of $\vec u(t)$ that we obtain and 
the subcritical assumption $s_p < 1$ show that for the frequency cascade case, the energy of the compact solution 
must be nonpositive.  By a result from \cite{kill}, for energy--subcritical equations this implies that either $u \equiv 0$
or $-\infty < T_- < T_+ < \infty$.  The latter outcome contradicts the fact that $T_+ = \infty$.  Thus, $u \equiv 0$
in the frequency cascade case.  We emphasize here that our methods for ruling out the frequency cascade case rely crucially on 
the subcritical assumption $s_p < 1$ (see the comments after Proposition \ref{prop3} for the role of $s_p$ in the proof of 
the blow--up result in \cite{kill}).  For $s_p \geq 1$, different methods must be employed to rule out even the subcase of self--similar blow--up, $T_- < \infty$
and $N(t) = (t- T_-)^{-1}$ (see, for example, \cite{km08} \cite{dkm5} \cite{dodl2} \cite{dkm7}).

In the soliton--like case, we show that $\vec u(t) \in \dot H^{1 + s_p} \times \dot H^{s_p}$ with a uniform bound in time.  
This implies that the trajectory $K = \{ \vec u(t) : t \in \R \}$ is precompact in $(\energysp) \cap (\dot H^1 \times L^2)$.  In the
defocusing case, it is then simple to show using conservation of energy that $u = 0$.  In the focusing case, this is not so, and this is where the main novelty 
of our work lies.  In the works \cite{dkm5} \cite{shen} \cite{dodl2} on the focusing supercritical and subcritical wave equation in $3d$ and $5d$, the soliton--like solution
is shown to be 0 using channels of energy arguments.  These arguments require exterior energy estimates for the free wave equation that 
are strongest in odd dimensions (see \cite{cks} \cite{klls1}).  In \cite{dodl1}, the soliton--like solution is ruled out using a virial argument
that is useful only for the case $s_p = \frac{1}{2}$ along with a result from \cite{kill} for solutions 
to the subcritical wave equation with nonpositive energy.  In our work, we give a simple virial argument to rule
out soliton--like solutions that is inspired by the work on the focusing critical wave equation \cite{dkm1} and is valid for any dimension and subcritical power $p$.   
The idea is to show that the time average of $\| u(t) \|_{L_x^{p+1}}^{p+1}$ for the soliton--like solution tends to 0 which along with compactness 
implies that the soliton--like solution converges to 0 in $\energysp$ as $t \rightarrow +\infty$.  By the
local theory, we conclude that $u \equiv 0$ as desired. 

The organization of this paper is as follows.  In Section 2, we gather preliminary facts from harmonic analysis, the local well--posedness
theory for \eqref{nlw}, and concentration compactness.  In Section 3, we show that 
a solution $u$ to \eqref{nlw} with the compactness property is in the energy space $\dot H^1 \times L^2$ and satisfies
an estimate that rules out the frequency cascade case.  In Section 
4, we rule out the soliton--like case by establishing even more regularity for the soliton--like solution and by using a simple virial argument. 

\textbf{Acknowledgments:}  This work was completed during the author's doctoral studies at the University of Chicago.  The author would like
to thank his adviser, Carlos Kenig, for introducing him to nonlinear dispersive equations and for his invaluable patience and guidance.  The author would also like to thank the anonymous referees whose careful reading and suggestions greatly improved the exposition in this work.

\section{Preliminaries}

In this section, we gather some well known facts from harmonic analysis, the local well--posedness of \eqref{nlw} and the 
concentration compactness method that will be relevant to the proof of Theorem \ref{thm1}.  
\subsection{Harmonic analysis}

We first recall the following Bernstein's inequalities.  Throughout this work, the operators $P_{\leq N}$, $P_{\geq N}$, and 
$P_N$ are the standard smooth frequency cutoffs.  
For notational convenience, we will denote the Littlewood--Paley cutoffs $P_{2^k}$ simply by $P_k$.  

\begin{lem}
Let $1 \leq p \leq q \leq +\infty$, and let $s \geq 0$.  Let $f : \R^d \rightarrow \R$.  Then 
\begin{align*}
&\| P_{\geq N} f \|_{L^p} \lesssim N^{-s} \| (-\Delta)^{s/2} P_{\geq N} f \|_{L^p}, \quad
\| P_{\leq N} (-\Delta)^{s/2} f \|_{L^p} \lesssim N^{s} \| P_{\leq N} f \|_{L^p}, \\
&\| (-\Delta)^{\pm s/2} P_{N} f \|_{L^p} \simeq N^{\pm s} \| P_{N} f \|_{L^p}, \quad
\| P_{\leq N} f \|_{L^q} \lesssim N^{\frac{d}{p} - \frac{d}{q}} \| P_{\leq N} f \|_{L^p}, \\
&\| P_{N} f \|_{L^q} \lesssim N^{\frac{d}{p} - \frac{d}{q}} \| P_{N} f \|_{L^p}. 
\end{align*}
\end{lem}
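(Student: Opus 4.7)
The plan is to reduce each stated inequality to Young's convolution inequality after identifying the relevant operators as convolution against kernels whose $L^r$ norms scale as the correct power of $N$. The unifying observation is that every projector appearing has Fourier multiplier of the form $m(\xi/N)$ for a fixed symbol $m$, so its kernel $K_N(x) = N^d K_1(Nx)$ satisfies $\|K_N\|_{L^r} = N^{d(1-1/r)} \|K_1\|_{L^r}$ by a change of variables.

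First I would dispatch the two pure Bernstein embeddings $\|P_{\leq N} f\|_{L^q} \lesssim N^{d/p - d/q} \|P_{\leq N} f\|_{L^p}$ and the analogue for $P_N$. Fix a smooth fattened cutoff $\tilde \varphi$ equal to $1$ on the Fourier support of $P_{\leq N}$, so that $P_{\leq N} f = \tilde K_N * P_{\leq N} f$ with $\tilde K_N$ the inverse Fourier transform of $\tilde \varphi(\cdot/N)$. Young's inequality with $1/r = 1 + 1/q - 1/p$, combined with the scaling relation $\|\tilde K_N\|_{L^r} \simeq N^{d(1-1/r)} = N^{d(1/p - 1/q)}$, closes the estimate. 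The bound for $P_N$ is identical with an annular fattened cutoff $\tilde \psi$ in place of $\tilde \varphi$.

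Next I would establish the Riesz-potential equivalences $\|(-\Delta)^{\pm s/2} P_N f\|_{L^p} \simeq N^{\pm s} \|P_N f\|_{L^p}$. Writing $(-\Delta)^{s/2} P_N f = m_N * P_N f$, the kernel $m_N$ has Fourier transform $\tilde \psi(\xi/N) |\xi|^s$; the crucial point is that $\tilde \psi(\xi) |\xi|^s$ is smooth and compactly supported \emph{away from the origin}, so $m_1^\vee \in \mathcal{S}(\R^d)$, and scaling yields $\|m_N\|_{L^1} \simeq N^s$. Young's inequality then gives the upper bound, and the reverse direction follows identically with $|\xi|^{-s}$ (still smooth on $\supp \tilde \psi$). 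For the companion bound $\|P_{\leq N}(-\Delta)^{s/2} f\|_{L^p} \lesssim N^s \|P_{\leq N} f\|_{L^p}$, to avoid the singularity of $|\xi|^s$ at the origin I would split $P_{\leq N} = P_{\leq 1} + \sum_{1 \leq 2^k \leq N} P_{2^k}$, apply the annular equivalence just proved to each $P_{2^k}$ piece, and sum the geometric series in $k$ with ratio $2^s$, which is dominated by $N^s$; the lowest piece $P_{\leq 1}(-\Delta)^{s/2}$ is bounded on $L^p$ by an $N$-independent constant by the same kernel argument.

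Finally, for $\|P_{\geq N} f\|_{L^p} \lesssim N^{-s} \|(-\Delta)^{s/2} P_{\geq N} f\|_{L^p}$, I would dyadically decompose $P_{\geq N} = \sum_{k \geq 0} P_{2^k N}$, apply the annular equivalence $\|P_{2^k N} f\|_{L^p} \lesssim (2^k N)^{-s} \|(-\Delta)^{s/2} P_{2^k N} f\|_{L^p}$ at each scale, and commute $P_{2^k N}$ past $(-\Delta)^{s/2}$ to pick up a copy of $(-\Delta)^{s/2} P_{\geq N} f$ controlled by $L^p$-boundedness of $P_{2^k N}$. Summing the geometric series in $k$ with ratio $2^{-s} < 1$ closes the estimate, with the edge case $s = 0$ being trivial. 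I do not anticipate any serious obstacle — all of these are standard harmonic analysis manipulations — the main care needed is the bookkeeping of fattened cutoffs to ensure Young's inequality applies at each stage and that each multiplier symbol remains smooth on its support.
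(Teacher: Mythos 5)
The paper does not prove this lemma; it simply cites it as the standard Bernstein inequalities (with reference to the usual sources). So there is no paper proof to compare against, and the task reduces to checking your argument. Your proof is the standard one: recognize every frequency projector as convolution against a rescaled kernel, verify the $L^r$ norm of that kernel scales as $N^{d(1-1/r)}$, and close with Young's inequality; for the estimates involving $(-\Delta)^{\pm s/2}$ on an annulus, observe that $\tilde\psi(\xi)|\xi|^{\pm s}$ is smooth and compactly supported away from the origin so its inverse Fourier transform is Schwartz; for the one-sided frequency cutoffs ($P_{\leq N}$ and $P_{\geq N}$), decompose dyadically, apply the annular equivalence at each scale, commute the dyadic projector through $(-\Delta)^{s/2}$ using $L^p$-boundedness of Fourier multipliers with bounded symbol, and sum the resulting geometric series (convergent for $s > 0$, with $s=0$ trivial). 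This is correct and complete.

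One small point worth flagging. In your treatment of $\| P_{\leq N}(-\Delta)^{s/2} f\|_{L^p}$, you handle the low-frequency block $P_{\leq 1}(-\Delta)^{s/2}$ by asserting it is bounded on $L^p$ ``by the same kernel argument.'' If ``the same'' means the scaling/Young argument applied directly to the symbol $\varphi(\xi)|\xi|^s$, note that this symbol is \emph{not} smooth at the origin for non-even $s$, so the kernel is not Schwartz; one needs the separate (true, but not scaling-automatic) fact that it decays like $|x|^{-d-s}$ and hence lies in $L^1$ precisely when $s>0$. You can sidestep this altogether by continuing the dyadic sum downward: $P_{\leq 1}(-\Delta)^{s/2} = \sum_{k \leq 0} P_{2^k}(-\Delta)^{s/2}$, with each annular term bounded on $L^p$ with operator norm $\lesssim 2^{ks}$ by the part you already proved, and $\sum_{k \leq 0} 2^{ks} < \infty$ for $s > 0$. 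That keeps the whole proof resting on exactly one kernel computation. Similarly, when you write $P_{\geq N} = \sum_{k \geq 0} P_{2^k N}$ and then replace $P_{2^k N} f$ by $P_{2^k N} P_{\geq N} f$, the exact identity holds only once the frequency supports nest properly; at the boundary scale you should either fatten $P_{\geq N}$ or simply note that $P_{2^k N}$ is a bounded Fourier multiplier and $P_{2^k N} P_{\geq N} = P_{2^k N}$ up to replacing $P_{\geq N}$ by a slightly enlarged cutoff. These are purely bookkeeping adjustments and do not affect the soundness of the argument.
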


In order to show a gain of regularity, we will need the notion of a frequency envelope (see \cite{taobook}). 

\begin{defn}
A sequence of positive real numbers $\alpha = \{\alpha_k \}_{k \in \Z}$ is said to be a \emph{frequency envelope}
if $\| \alpha \|_{\ell^2(\Z)} < +\infty$.  

We say that $(f,g) \in \dot H^s \times \dot H^{s-1}$ \emph{lies underneath} the frequency envelope $\alpha$ if 
\begin{align*}
\| (P_k f, P_k g) \|_{\dot H^s \times \dot H^{s-1}} \leq \alpha_k, \quad \forall k \in \Z. 
\end{align*}
\end{defn}

We note that if $(f,g)$ lies underneath $\alpha$, then 
\begin{align}
\| (f,g) \|_{\dot H^s \times \dot H^{s-1}} \lesssim \| \alpha \|_{\ell^2},
\end{align}
or more generally, for any $\sigma \in \R$
\begin{align*}
\| (f,g) \|_{\dot H^{s+\sigma} \times \dot H^{s+\sigma-1}} 
\lesssim \left \| \{ 2^{\sigma k} \alpha_k \}_{k \in \Z} \right \|_{\ell^2}. 
\end{align*}

We will also need the following refined radial Sobolev embedding which is a consequence of the Hardy--Littlewood--Sobolev
inequality (see \cite{tao} Corollary A.3). 

\begin{lem}
Let $0 < s < d$ and suppose that $f \in \dot H^s$ is radial.  Suppose that 
\begin{align}
\beta > - \frac{d}{q}, \quad \frac{1}{2} - s \leq \frac{1}{q} \leq \frac{1}{2}, \quad \frac{1}{q} = \frac{1}{2} - \frac{\beta + s}{d}, 
\end{align}
and at most one of the equalities $q = 1$, $q = \infty$, $\frac{1}{q} + s = \frac{1}{2}$, holds.  Then 
\begin{align*}
\| |x|^{\beta} f \|_{L^q} \leq C \| f \|_{\dot H^s}. 
\end{align*}
\end{lem}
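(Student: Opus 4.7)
The plan is to exploit radial symmetry to sharpen the weighted Sobolev embedding beyond what standard non-radial Hardy--Littlewood--Sobolev provides. First, the scaling relation $1/q = 1/2 - (\beta+s)/d$ makes both sides of the inequality invariant under $f(x) \mapsto \lambda^{d/2-s} f(\lambda x)$, and the hypothesis $\beta > -d/q$ is equivalent (via the same relation) to the subcritical condition $s < d/2$. These observations serve both as a sanity check and as the reason a single scale can be analyzed at a time.

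After reducing to smooth radial $f$ by density, I would perform a Littlewood--Paley decomposition $f = \sum_k P_k f$. Each $P_k f$ is radial, and for such frequency-localized radial functions the classical Strauss-type refinement of Bernstein gives
$$|P_k f(x)| \lesssim 2^{kd/2}\bigl(1 + 2^k|x|\bigr)^{-(d-1)/2} \|P_k f\|_{L^2}.$$
Combined with $\| |x|^\beta \|_{L^q(|x| \sim R)}^q \simeq R^{\beta q + d}$, splitting into the near zone $|x| \lesssim 2^{-k}$ and the far zone $|x| \gtrsim 2^{-k}$ would yield, after a short calculation using the scaling relation, $\| |x|^\beta P_k f \|_{L^q} \lesssim 2^{ks} \|P_k f\|_{L^2} \simeq \|P_k f\|_{\dot H^s}$. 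The condition $\beta > -d/q$ is precisely what is needed to integrate $|x|^\beta$ near the origin. Summing in $k$ via the Littlewood--Paley square-function characterization of $L^q$ (valid for $1 < q < \infty$) then yields the claimed bound.

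The main technical obstacle is far-field integrability: the pointwise estimate $|P_k f(x)| \lesssim 2^{k/2} |x|^{-(d-1)/2} \|P_k f\|_{L^2}$ combined with $|x|^\beta$ is $L^q$-integrable at infinity only when $s > 1/2$. For the borderline range $s \leq 1/2$, I would instead represent $f = c_s I_s * g$ with $g = (-\Delta)^{s/2} f \in L^2$ radial, and reduce the Riesz potential convolution to a weighted one-dimensional integral against a radial kernel, to which a one-dimensional Hardy--Littlewood--Sobolev / Hardy-type estimate applies under the given hypotheses on $\beta$, $q$, $s$. The excluded endpoint cases $q = 1$, $q = \infty$, and $1/q + s = 1/2$ are precisely those at which either the square-function characterization of $L^q$ or the endpoint Sobolev embedding $\dot H^s \hookrightarrow L^{2d/(d-2s)}$ fails, which is why the hypothesis tolerates at most one of them.
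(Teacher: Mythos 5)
The paper does not actually prove this lemma; it is quoted as Corollary A.3 of Tao--Visan--Zhang \cite{tao}, where it is deduced from a one-dimensional weighted convolution estimate (a variant of Hardy--Littlewood--Sobolev) applied to the radial Riesz potential kernel. Your argument is therefore a genuinely different route, at least in the regime $s>1/2$.

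For $s > 1/2$, your decomposition is sound and the dyadic bookkeeping checks out: the near-zone integral converges precisely when $\beta q + d > 0$, i.e.\ $\beta > -d/q$, and both zones produce $\| |x|^\beta P_k f \|_{L^q} \lesssim 2^{ks}\|P_k f\|_{L^2}$ after using the scaling relation $d/q = d/2 - \beta - s$. The one step you state too loosely is the summation. The phrase ``Littlewood--Paley square-function characterization of $L^q$'' does not immediately apply, because multiplication by $|x|^\beta$ destroys frequency localization. What actually closes the argument is the observation that the hypotheses force $q \geq 2$ (since $1/q \leq 1/2$), together with weighted Littlewood--Paley theory: $|x|^{\beta q}\in A_q$ exactly when $-d < \beta q < d(q-1)$, and the lower bound is the hypothesis $\beta > -d/q$ while the upper bound reduces, via the scaling relation, to $d/2 + s > 0$, which is free. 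With that, $\| |x|^\beta f\|_{L^q} \simeq \| (\sum_k |P_k f|^2)^{1/2}\|_{L^q(|x|^{\beta q})}$, and Minkowski's inequality for $q\ge2$ gives $\leq (\sum_k \| |x|^\beta P_k f\|_{L^q}^2)^{1/2} \lesssim \|f\|_{\dot H^s}$. You should either say this or simply drop the phrase ``square-function characterization'' in favor of ``$A_q$-weighted Littlewood--Paley plus $q\ge2$.''

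For $s \leq 1/2$, your fallback is essentially a sketch of the Tao--Visan--Zhang proof: write $f = c_s I_s * g$ with $g\in L^2$ radial, reduce the convolution to a one-dimensional integral against a kernel obtained by integrating the Riesz kernel over spheres, and apply a weighted one-dimensional estimate. This is the right idea, and it is the standard proof of the cited result, but as written it is a sketch rather than a proof; the actual kernel estimate and the verification that the stated exponent conditions land inside the admissible range for the weighted one-dimensional bound are the content, and you have not carried them out. Net assessment: for $s>1/2$ you have a correct and more self-contained argument via the Strauss-type pointwise radial estimate; for $s\le 1/2$ you are essentially deferring to the cited proof, which is what the paper itself does.
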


A crucial tool in the local well--posedness theory and in dealing with nonlinearities that have nonintegral powers is the following chain rule and Leibniz rule for fractional derivatives
(see, for example, \cite{kpv}): 

\begin{ppn}
If $F \in C^2$, $F(0) = F'(0) = 0$, and for all $a,b$ we have $|F'(a+b)| \lesssim |F'(a)| + |F'(b)|$ and 
$|F''(a+b)| \lesssim |F''(a)| + |F''(b)|$, then we have for $0 < \alpha < 1$
\begin{align*}
\| |D_x|^\alpha F(u) \|_{L^q_x} \lesssim \| F'(u) \|_{L^{q_1}_x} \| |D_x|^{\alpha} u \|_{L^{q_2}_x}, 
\end{align*}
and
\begin{align*}
\| |D_x|^{\alpha} (F(u) - &F(v)) \|_{L^q_x} \lesssim
\left ( 
\| F'(u) \|_{L^{q_1}_x} + 
\| F'(v) \|_{L^{q_1}_x}
\right ) \| |D_x|^{\alpha} (u - v) \|_{L^{q_2}_x} \\
&+  \left ( 
\| F''(u) \|_{L^{r_1}_x} + 
\| F''(v) \|_{L^{r_1}_x}
\right ) 
\left ( 
\| |D_x|^{\alpha} u \|_{L^{r_2}_x} + 
\| |D_x|^{\alpha} v \|_{L^{r_2}_x}
\right ) \| (u - v) \|_{L^{r_3}_x}, 
\end{align*}
where $
\frac{1}{q} = \frac{1}{q_1} + \frac{1}{q_2} =
\frac{1}{r_1} + \frac{1}{r_2} + \frac{1}{r_3}
$.
\end{ppn}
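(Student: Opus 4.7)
My plan is to follow the classical Kenig--Ponce--Vega approach, realizing $|D_x|^\alpha$ for $0 < \alpha < 1$ through either the singular-integral representation
\[
|D_x|^\alpha f(x) = c_{d,\alpha}\,\mathrm{P.V.}\int_{\R^d} \frac{f(x)-f(y)}{|x-y|^{d+\alpha}}\,dy,
\]
or, equivalently for the $L^q$ theory with $1<q<\infty$, the Stein square function
\[
\mathcal{S}_\alpha f(x) = \left(\int_0^\infty |f(x)-A_t f(x)|^2\,\frac{dt}{t^{1+2\alpha}}\right)^{1/2}, \qquad A_t f(x) = \frac{1}{|B(x,t)|}\int_{B(x,t)} f(y)\,dy,
\]
which satisfies $\||D_x|^\alpha f\|_{L^q} \simeq \|\mathcal{S}_\alpha f\|_{L^q}$.

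For the chain rule estimate, I would begin from the mean value representation
\[
F(u(x)) - F(u(y)) = (u(x)-u(y)) \int_0^1 F'\bigl((1-s)u(y)+s\,u(x)\bigr)\,ds,
\]
and use the structural hypothesis $|F'(a+b)| \lesssim |F'(a)| + |F'(b)|$ iteratively, together with the $C^2$ regularity and $F'(0)=0$ (which let one control $|F'(\lambda a)|$ by $|F'(a)|$ for $\lambda\in[0,1]$ via $F'(\lambda a) = a\int_0^\lambda F''(ta)\,dt$ and the parallel hypothesis on $F''$), to obtain the pointwise bound
\[
|F(u(x)) - F(u(y))| \lesssim \bigl(|F'(u(x))| + |F'(u(y))|\bigr)\,|u(x)-u(y)|.
\]
Inserting this into $\mathcal{S}_\alpha F(u)(x)$, the contribution of $|F'(u(y))|$ averaged over $B(x,t)$ is absorbed by the Hardy--Littlewood maximal function $M(F'(u))(x)$, and the remaining factor produces $\mathcal{S}_\alpha u(x)$. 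Hölder's inequality with $\tfrac{1}{q} = \tfrac{1}{q_1}+\tfrac{1}{q_2}$ and the $L^{q_1}$-boundedness of $M$ then yield the first estimate.

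For the difference estimate, I would write
\[
F(u) - F(v) = (u-v)\int_0^1 F'\bigl(v + s(u-v)\bigr)\,ds =: (u-v)\,G_{u,v},
\]
and apply the Kato--Ponce fractional Leibniz rule to distribute $|D_x|^\alpha$ across this product. The first resulting term is $\|G_{u,v}\|_{L^{q_1}}\||D_x|^\alpha(u-v)\|_{L^{q_2}}$, and the $F'$ hypothesis applied to $v+s(u-v)$, followed by integration in $s$, bounds $\|G_{u,v}\|_{L^{q_1}} \lesssim \|F'(u)\|_{L^{q_1}}+\|F'(v)\|_{L^{q_1}}$. The second term, of the form $\||D_x|^\alpha G_{u,v}\|_{L^r}\,\|u-v\|_{L^{r_3}}$ with $1/r = 1/r_1+1/r_2$, I would handle by applying the first (chain rule) estimate \emph{to $F'$ in place of $F$}, invoking the $F''$ hypothesis, and accounting for both endpoints of the $s$-integration, which produces the remaining factors in the claimed bound.

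The main technical obstacle is the rescaling step $|F'(\lambda a)| \lesssim |F'(a)|$ for $\lambda\in(0,1)$: the hypothesis only gives additive subadditivity and not homogeneity, so one must exploit the $C^2$ structure and $F'(0) = 0$ to compare $F'(\lambda a)$ with $F'(a)$ through an integral of $F''$, and then verify that the analogous step with $F''(\lambda a)$ does not introduce constants that blow up as $\lambda \to 0$ or $\lambda \to 1$. The same subtlety reappears in the difference estimate when the chain rule is applied to $F'$, so careful bookkeeping of the $s$-integration is essential.
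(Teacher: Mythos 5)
The paper does not actually prove this proposition---it is stated as a known tool and cited to Kenig--Ponce--Vega \cite{kpv}---so there is no internal proof to compare against. Your sketch follows the right classical blueprint (Strichartz square-function characterization of $\dot W^{\alpha,q}$, a pointwise difference bound, H\"older, and the maximal function), but the step you single out as ``the main technical obstacle'' is not correctly resolved, and this is a genuine gap rather than a deferred detail.

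Specifically, you propose to obtain the rescaling bound $|F'(\lambda a)|\lesssim |F'(a)|$ for $\lambda\in[0,1]$ from the identity $F'(\lambda a)=a\int_0^\lambda F''(ta)\,dt$ together with the subadditivity of $F''$. This does not give what you want. The comparison would require
\begin{align*}
\Bigl|\int_0^\lambda F''(ta)\,dt\Bigr|\lesssim \Bigl|\int_0^1 F''(ta)\,dt\Bigr|,
\end{align*}
but if $F''$ changes sign on the segment from $0$ to $a$ the right-hand side can be much smaller than the left-hand side due to cancellation, and subadditivity of $|F''|$ does nothing to prevent this: it only controls $|F''(a+b)|$ from \emph{above}, while the needed inequality requires a \emph{lower} bound on $|\int_0^1 F''(ta)\,dt|$. (For the concrete nonlinearity $|u|^{p-1}u$ the bound is trivial because $F'$ is monotone in $|u|$, but the proposition is stated abstractly, and your argument is supposed to work from the stated hypotheses alone.) Without the rescaling bound, the subadditivity hypothesis applied to $\xi=(1-s)u(y)+su(x)$ only yields $|F'(\xi)|\lesssim |F'(u(y))|+|F'(s(u(x)-u(y)))|$ or its mirror, and the extra term $F'$ evaluated at a scaled difference has to be controlled somehow; your proposal does not close this loop.

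A second, smaller issue: the passage from the pointwise bound to the $L^q$ estimate is not a straightforward maximal-function absorption. After inserting $|F(u(x))-F(u(y))|\lesssim(|F'(u(x))|+|F'(u(y))|)\,|u(x)-u(y)|$ into the Strichartz square function, the cross term $\fint_{B(x,t)}|F'(u(y))|\,|u(y)-u(x)|\,dy$ is genuinely bilinear in $y$ and does not factor as $M(F'(u))(x)\cdot\mathcal{S}_\alpha u(x)$. Applying Cauchy--Schwarz in $y$ produces the $L^2$-based maximal function $M_2(F'(u))$ and the $L^2$-oscillation square function, which works only for $q_1>2$; covering the full range $1<q_1\le\infty$ requires a more careful argument (this is precisely where the original proofs do real work). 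I would not fault you for leaving the cross term as a detail in a proposal, but your phrasing makes it sound like a one-line step, which it is not. The rescaling lemma, by contrast, is a substantive gap: the resolution you offer does not prove it, and some additional idea is needed.
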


Finally, we will need the following Strichartz estimates for solutions to the inhomogeneous wave equation in $\R^{1+d}$
(see \cite{gin} \cite{keel}).  We say that 
a pair $(q,r)$ of exponents is \emph{wave admissible} if $q, r \in [2,\infty]$, $(q,r,d) \neq (2,\infty,3)$, and 
\begin{align*}
\frac{1}{q} + \frac{d-1}{2r} \leq \frac{d-1}{4}.
\end{align*}

\begin{ppn}
Let $(q,r)$ and $(a,b)$ be wave admissible.  Let $s, \rho, \mu \in \R$ satisfy the scaling condition
\begin{align*}
\frac{1}{q} + \frac{d}{r} &= \frac{d}{2} - s + \rho, \\
\frac{1}{a} + \frac{d}{b} &= \frac{d-2}{2} + s + \mu.
\end{align*}
Let $I$ be an interval with $0 \in I$, and suppose $u$ is a (weak) solution to 
\begin{align*}
\left\{
     \begin{array}{lr}
       \partial_t^2 u - \Delta u  = F, \quad (t,x) \in I \times \R^d \\
       \vec u(0) = (u_0,u_1) \in \dot H^s \times \dot H^{s-1}
     \end{array}
   \right..
\end{align*}
Then 
\begin{align*}
\sup_{t \in I} \| \vec u(t) \|_{\dot H^s \times \dot H^{s-1}} + \| |D_x|^{\rho} u \|_{L^q_t L^r_x(I \times \R^d)} 
\lesssim \left ( \| \vec u(0) \|_{\dot H^s \times \dot H^{s-1}} 
+ \| |D_x|^{-\mu} F \|_{L^{a'}_t L^{b'}_x(I \times \R^d)} \right ).
\end{align*}
\end{ppn}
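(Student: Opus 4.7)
The plan is to reduce this proposition to the sharp Strichartz estimates of Keel--Tao \cite{keel} and Ginibre--Velo \cite{gin}, which correspond to the special case $\rho = \mu = 0$, by commuting fractional derivatives through the wave propagator and invoking Sobolev embedding to absorb the parameters $\rho$ and $\mu$. The main content is scaling bookkeeping; no new analysis is required.

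First, set $v := |D_x|^\rho u$. Since $\cos(t|D_x|)$ and $|D_x|^{-1}\sin(t|D_x|)$ commute with $|D_x|^\rho$, $v$ satisfies the wave equation with initial data $(|D_x|^\rho u_0, |D_x|^\rho u_1) \in \dot H^{s-\rho} \times \dot H^{s-\rho - 1}$ and forcing $|D_x|^\rho F$. The classical Strichartz estimate at regularity $s - \rho$, applied to $v$, gives
\begin{align*}
\sup_t \| \vec v(t) \|_{\dot H^{s-\rho} \times \dot H^{s-\rho - 1}} + \| v \|_{L^q_t L^r_x}
\lesssim \| \vec u(0) \|_{\dot H^s \times \dot H^{s-1}} + \| |D_x|^\rho F \|_{L^{\tilde a'}_t L^{\tilde b'}_x}
\end{align*}
for any wave admissible $(q,r)$ with $\frac{1}{q}+\frac{d}{r} = \frac{d}{2}-(s-\rho)$ (which is the hypothesized scaling) and any wave admissible $(\tilde a, \tilde b)$ with $\frac{1}{\tilde a} + \frac{d}{\tilde b} = \frac{d-2}{2} + (s-\rho)$. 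The left-hand side recovers the quantities in the statement, since $\|\vec v(t)\|_{\dot H^{s-\rho} \times \dot H^{s-\rho-1}} = \|\vec u(t)\|_{\dot H^s \times \dot H^{s-1}}$ and $\|v\|_{L^q_t L^r_x} = \||D_x|^\rho u\|_{L^q_t L^r_x}$.

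To convert the forcing norm into $\||D_x|^{-\mu} F\|_{L^{a'}_t L^{b'}_x}$, set $G := |D_x|^{-\mu} F$, so that $|D_x|^\rho F = |D_x|^{\rho+\mu} G$, and choose $\tilde a = a$. The difference between the two dual scaling conditions then forces $\frac{1}{\tilde b'} - \frac{1}{b'} = \frac{\rho + \mu}{d}$, which is precisely the relation under which Sobolev embedding (or Hardy--Littlewood--Sobolev) yields the pointwise-in-time bound $\| |D_x|^{\rho+\mu} G(t, \cdot) \|_{L^{\tilde b'}_x} \lesssim \|G(t,\cdot)\|_{L^{b'}_x}$; Minkowski's inequality in time then finishes the reduction.

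The one genuine check is that the resulting pair $(\tilde a, \tilde b)$ remains wave admissible, i.e., that $\frac{1}{\tilde a} + \frac{d-1}{2\tilde b} \leq \frac{d-1}{4}$, under the given hypotheses on $(a,b)$ and the allowed range of $\rho, \mu$. This is the true role of the wave admissibility constraints in the statement, and amounts to an elementary arithmetic verification. Beyond this, I would expect no substantive analytical obstacle.
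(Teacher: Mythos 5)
The paper states this proposition without proof, as a cited result from \cite{gin} and \cite{keel}, so there is no argument in the paper to compare against; I will simply assess whether your reduction is valid. It is not, in general, and the failure is not the admissibility check you flagged but the Sobolev step itself. With $\tilde a = a$ the scaling forces $\frac{1}{\tilde b'} - \frac{1}{b'} = \frac{\rho+\mu}{d}$, and Hardy--Littlewood--Sobolev gives $\||D_x|^{\rho+\mu}G\|_{L^{\tilde b'}_x} \lesssim \|G\|_{L^{b'}_x}$ only when $\rho+\mu < 0$, i.e.\ only when $|D_x|^{\rho+\mu}$ is smoothing. The hypotheses permit $\rho+\mu$ to be any value up to $1$. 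Concretely, $(q,r)=(a,b)=(\infty,2)$ gives $\rho = s$, $\mu = 1-s$, $\rho+\mu=1$, and the proposition reduces to the energy inequality
$\sup_t\|\vec u(t)\|_{\dot H^s\times\dot H^{s-1}} \lesssim \|\vec u(0)\|_{\dot H^s\times\dot H^{s-1}} + \||D_x|^{s-1}F\|_{L^1_tL^2_x}$;
your scheme would require $\||D_x|G\|_{L^{\tilde b'}_x}\lesssim\|G\|_{L^2_x}$ with $\tilde b' < 2$, which is false. Even in the regime $\rho+\mu<0$ the scheme can break, because then $\tilde b < b$, which pushes $(a,\tilde b)$ toward the sharp admissibility line $\frac{1}{a}+\frac{d-1}{2\tilde b}=\frac{d-1}{4}$; if $(a,b)$ sits on that line, the auxiliary pair exits the admissible region entirely. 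So this is a genuine obstruction, not an elementary arithmetic verification.

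The standard route (as in Ginibre--Velo, and implicit in Keel--Tao) does not attempt to place both exponent pairs at one common regularity level via Sobolev embedding. Instead one proves the homogeneous estimate $\||D_x|^{\rho}e^{\pm it\sqrt{-\Delta}}g\|_{L^q_tL^r_x}\lesssim\|g\|_{\dot H^s}$ for each admissible pair separately at its own scaling (Sobolev embedding is used here only to reduce a sub-admissible pair to a sharp one on the left-hand side, which is the direction in which it works); its dual for $(a,b)$ bounds $G\mapsto|D_x|^{\mu-1}\int e^{-i\tau\sqrt{-\Delta}}G(\tau)\,d\tau$ from $L^{a'}_tL^{b'}_x$ into $\dot H^{s}$. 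Composing these two maps yields the non-retarded inhomogeneous bound, and the Christ--Kiselev lemma (using $q>a'$; the endpoint $q=a'=2$ is handled separately by the Keel--Tao argument) passes to the retarded Duhamel operator. The shifts $\rho$ and $\mu$ are built into the two separate homogeneous estimates and never need to be matched by a single Sobolev embedding. Your reduction only recovers the subset of cases with $\rho+\mu<0$ and $(a,\tilde b)$ still admissible, which is strictly weaker than the proposition.
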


\subsection{Local well--posedness} 

We first define what we mean when we say that a function $u$ is a solution of \eqref{nlw}.  Let $S(t)$ be the propagator to the free wave equation, so that 
\begin{align*}
S(t)(u_0,u_1) = \cos (t \sqrt{-\Delta} ) u_0 + \frac{\sin(t \sqrt{-\Delta}) }{\sqrt{-\Delta}} u_1.
\end{align*}
We define the following function spaces 
\begin{align*}
S(J) &= L_{t,x}^{\frac{(p-1)(d+1)}{2}}(J \times \R^d), \\
W(J) &= L_{t,x}^{\frac{2(d+1)}{d-1}}(J \times \R^d), \\
N(J) &= L_{t,x}^{\frac{2(d+1)}{d+3}}(J \times \R^d).
\end{align*}
We remark that if $s_p = 1/2$, then $p = \frac{d+3}{d-1}$ and $S(J) = W(J)$.  

\begin{defn}
A solution to \eqref{nlw} on an interval $I$, where $0 \in I$, is a function $u$ such that $\vec u(t) \in C(I ; \energysp)$,
\begin{align*}
J \Subset I \implies \| u \|_{S(J)} + \| |D_x|^{s_p-1/2} u \|_{W(J)} < +\infty, 
\end{align*}
and $u(t)$ satisfies the Duhamel formulation
\begin{align*}
u(t) = S(t)(u_0,u_1) - \mu \int_0^t \frac{\sin((t-s) \sqrt{-\Delta})}{\sqrt{-\Delta}} |u(s)|^{p-1} u(s) ds.
\end{align*}
\end{defn}
 
Using the Strichartz estimates 
for the inhomogeneous equation
\begin{align*}
\sup_{t \in I} \| \vec u(t) \|_{\energysp} + \| u \|_{S(I)}& + \| |D_x|^{s_p-1/2} u \|_{W(I)} \\
&\lesssim \| (u_0,u_1) \|_{\energysp} + \| |D_x|^{s_p - 1/2} F \|_{N(I)},
\end{align*}
and the chain rule for fractional derivatives, it is now standard (via contraction and continuity arguments) to establish the following 
local well--posedness and long--time perturbation theory for \eqref{nlw} in our current setting (see for example
\cite{km06} \cite{km08} \cite{km10} \cite{km11} for details).

\begin{ppn}[Local well--posedness theory]
Let $d = 4, 5$, and let $1/2 \leq s_p < 1$. Let $I$ be an interval with $0 \in I$, and let $(u_0,u_1) \in \energysp$
with $\| (u_0,u_1) \|_{\energysp} \leq A$.  There exists 
$\delta = \delta(A) > 0$ such that if
$$
\| S(t)(u_0,u_1) \|_{S(I)} < \delta,
$$
then there exists a unique solution $u$ to \eqref{nlw} in $I \times \R^d$.  The solution $u$ satisfies the estimates
\begin{align*}
\sup_{t \in I} \| \vec u(t) \|_{\energysp} + \| |D_x|^{s_p - 1/2} u \|_{W(I)} &\leq C(A), \\
\| u \|_{S(I)} &\leq 2  \delta.
\end{align*}
Moreover, the map $(u_0, u_1) \mapsto \vec u \in C(I ; \energysp)$ is Lipschitz.  
Finally, we have persistence of regularity: if $\mu \in [s_p, s_p +1]$ and $(u_0,u_1) \in (\energysp) \cap (\dot H^{\mu} \times \dot H^{\mu-1})$, then $\vec u(t) \in C(I; (\energysp) \cap (\dot H^{\mu} \times \dot H^{\mu-1}))$ and for all $J \Subset I$ we have 
$\| u \|_{X(J)} <\infty$
for any Strichartz admissible norm $X(J)$ satisfying the scaling condition with $s = \mu$.  
\end{ppn}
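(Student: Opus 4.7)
The plan is to run the standard Strichartz plus contraction mapping argument, in the spirit of the Kenig--Merle works cited in the excerpt. First I would verify that the diagonal pair $(q,r) = \bigl(\tfrac{(p-1)(d+1)}{2}, \tfrac{(p-1)(d+1)}{2}\bigr)$ is wave admissible (the admissibility inequality $\tfrac{1}{q} + \tfrac{d-1}{2r} \leq \tfrac{d-1}{4}$ reduces precisely to the hypothesis $s_p \geq 1/2$) and satisfies the scaling condition with $s = s_p$, $\rho = s_p - 1/2$. With this pair, the inhomogeneous Strichartz estimate applied to the Duhamel map
\begin{align*}
\Phi(u)(t) := S(t)(u_0,u_1) - \mu \int_0^t \frac{\sin((t-s)\sqrt{-\Delta})}{\sqrt{-\Delta}} |u|^{p-1}u(s)\,ds
\end{align*}
bounds $\sup_{t \in I} \|(\Phi(u)(t),\partial_t \Phi(u)(t))\|_{\energysp} + \|\Phi(u)\|_{S(I)} + \||D_x|^{s_p-1/2}\Phi(u)\|_{W(I)}$ by $\|(u_0,u_1)\|_{\energysp} + \||D_x|^{s_p-1/2}(|u|^{p-1}u)\|_{N(I)}$.

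The key nonlinear estimate is
\begin{align*}
\||D_x|^{s_p-1/2}(|u|^{p-1}u)\|_{N(I)} \lesssim \|u\|_{S(I)}^{p-1}\,\||D_x|^{s_p-1/2}u\|_{W(I)},
\end{align*}
proved by H\"older's inequality (using $\tfrac{1}{N} = \tfrac{1}{W} + \tfrac{p-1}{S}$, which is readily verified from the definitions) combined with the fractional chain rule applied to $F(z) = |z|^{p-1}z$. The hypotheses of the chain rule are satisfied because $p \geq 2$ throughout the range $d \in \{4,5\}$, $s_p \geq 1/2$, so $F \in C^2$ with submultiplicative $F'$ and $F''$; when $s_p = 1/2$ the multiplier $|D_x|^{s_p-1/2}$ is the identity and the bound reduces to pure H\"older. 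Inserting this estimate into the Strichartz bound and using the smallness $\|S(t)(u_0,u_1)\|_{S(I)} < \delta$, I would choose $\delta = \delta(A)$ small enough that $\Phi$ contracts on the ball $\{u : \|u\|_{S(I)} \leq 2\delta,\ \||D_x|^{s_p-1/2}u\|_{W(I)} \leq C(A)\}$ equipped with the metric $\|u-v\|_{S(I)} + \||D_x|^{s_p-1/2}(u-v)\|_{W(I)}$; the contraction estimate on differences uses the second (Leibniz-type) inequality of the fractional chain rule. Banach's fixed point theorem then produces the unique solution with the claimed bounds, and the same difference estimate applied to two solutions arising from nearby data gives the Lipschitz continuity of the data-to-solution map.

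For persistence of regularity, I would partition $I$ into finitely many subintervals on which $\|u\|_{S(\cdot)}$ is small, and on each apply the Strichartz estimate at regularity $\mu$ together with the fractional Leibniz/chain rule to bootstrap a bound of the form $\||D_x|^{\mu-1/2}u\|_{W(I_j)} < \infty$; the hypothesis $\mu \in [s_p, s_p+1]$ keeps the fractional derivative order in the range where the chain rule from the excerpt applies (or is trivial at the endpoints). The statement for an arbitrary Strichartz admissible norm then follows from one more Strichartz application at regularity $\mu$ with the appropriate scaling. The main obstacle is the nonlinear estimate, since $p-1$ is in general non-integer and the map $z \mapsto |z|^{p-1}z$ has only limited smoothness; the argument pivots on the fortunate fact that $p \geq 2$ throughout the range considered, which allows the fractional chain rule to be applied cleanly with a controlled second derivative. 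Everything else is routine once that estimate is in hand.
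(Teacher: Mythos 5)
Your proposal is the standard Strichartz-plus-contraction argument with the fractional chain rule, which is exactly what the paper itself defers to (it cites the Kenig--Merle papers rather than writing the proof out); the structure, the admissibility check for the diagonal pair, the H\"older exponent identity $\tfrac{1}{N} = \tfrac{1}{W} + \tfrac{p-1}{S}$, the key nonlinear estimate, the Lipschitz dependence via the difference estimate, and the observation that when $s_p = 1/2$ the derivative multiplier disappears are all correct. Two small inaccuracies are worth flagging, neither of which changes the viability of the scheme. First, a bookkeeping slip: for the diagonal pair $(q,r) = \bigl(\tfrac{(p-1)(d+1)}{2}, \tfrac{(p-1)(d+1)}{2}\bigr)$ that defines $S(I)$, the scaling condition with $s = s_p$ forces $\rho = 0$, not $\rho = s_p - 1/2$; the value $\rho = s_p - 1/2$ belongs to the separate pair $\bigl(\tfrac{2(d+1)}{d-1}, \tfrac{2(d+1)}{d-1}\bigr)$ underlying $W(I)$, and both pairs appear in the Strichartz bound. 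Second, in the persistence-of-regularity step you assert that $\mu \in [s_p, s_p+1]$ ``keeps the fractional derivative order in the range where the chain rule from the excerpt applies,'' but the chain rule quoted in the paper is stated only for $\alpha \in (0,1)$, whereas $\mu - \tfrac12$ can exceed $1$ (for instance $s_p = 0.9$, $\mu = 1.9$ gives $\mu - \tfrac12 = 1.4$). In that regime one must first peel off a whole derivative via $\nabla(|u|^{p-1}u) = p|u|^{p-1}\nabla u$ and then apply the fractional chain/Leibniz rule at the residual order $\mu - \tfrac32 \in [0, s_p - \tfrac12) \subset [0, \tfrac12)$, or argue with a paraproduct decomposition; this is the device used in the Kenig--Merle references the paper cites and is routine, but it is not covered by the chain-rule lemma as stated.
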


\begin{ppn}[Long--time perturbation theory]\label{longtime}
Let $M, A, A' > 0$. There exists $\epsilon_0 = \epsilon_0(M,A,A') > 0$ and $\alpha > 0$ with the following property.  Let 
$\epsilon < \epsilon_0$, and let $(u_0,u_1) \in \energysp$.  Assume that 
$\tilde u$ is defined on $I \times \R^d$ and solves 
\begin{align*}
\partial_t^2 \tilde u - \Delta \tilde u + \mu |\tilde u|^{p-1} \tilde u = e, \quad \mbox{on } I \times \R^d,
\end{align*}
such that $\| |D_x|^{s_p - 1/2} \tilde u \|_{W(J)}  < + \infty$ for every $J \Subset I$, 
\begin{align*}
\sup_{t \in I} \| \vec{\tilde u}(t) \|_{\energysp} &\leq A, \\
\| \tilde u \|_{S(I)} &\leq M, \\
\| \vec{\tilde u}(0) - (u_0,u_1) \|_{\energysp} &\leq A', \\
\| |D_x|^{s_p - 1/2} e \|_{N(I)} + \| S(t)( \vec{\tilde u}(0) - (u_0,u_1)) \|_{S(I)} &\leq \epsilon.
\end{align*}
Then there exists a unique solution $u$ to \eqref{nlw} defined on $I$ with $\vec u(0) = (u_0,u_1)$, and $u$ satisfies
\begin{align*}
\| u \|_{S(I)} &\leq C(M,A,A'), \\
\sup_{t \in I} \| \vec{\tilde u}(t) - \vec u(t) \|_{\energysp} &\leq C(M,A,A')(A' + \epsilon + \epsilon^{\alpha}).
\end{align*}
\end{ppn}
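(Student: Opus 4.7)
The plan is the classical subdivide-and-iterate argument from \cite{km06}, adapted to the spaces $S$, $W$, $N$ defined above. I work with the difference $w=u-\tilde u$, which formally satisfies
\[
\partial_t^2 w - \Delta w = -\mu\bigl(|\tilde u+w|^{p-1}(\tilde u+w)-|\tilde u|^{p-1}\tilde u\bigr) - e,
\qquad \vec w(0)=(u_0,u_1)-\vec{\tilde u}(0).
\]
The local well--posedness theory produces a unique maximal solution $u$ with initial data $(u_0,u_1)$; my task is to push this solution through all of $I$ and to estimate $w$.

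First I would partition $I=\bigcup_{j=0}^{J-1} I_j$ with endpoints $t_0<\dots<t_J$ chosen so that $\|\tilde u\|_{S(I_j)}\leq \eta$ on each subinterval, where $\eta=\eta(A,A')$ is a small absorbing constant. Since $\|\tilde u\|_{S(I)}\leq M$, the number of pieces satisfies $J\leq J(M,\eta)$. On each $I_j=[t_j,t_{j+1}]$ I apply the Strichartz estimate of Proposition $2.5$ to the Duhamel formulation of the equation for $w$, with $s=s_p$ and $\rho=s_p-\tfrac12$, to obtain
\[
\sup_{t\in I_j}\|\vec w(t)\|_{\energysp}+\|w\|_{S(I_j)}+\||D_x|^{s_p-1/2}w\|_{W(I_j)}
\]
\[
\lesssim \|\vec w(t_j)\|_{\energysp}+\|S(\cdot-t_j)\vec w(t_j)\|_{S(I_j)}+\||D_x|^{s_p-1/2}(F(\tilde u+w)-F(\tilde u))\|_{N(I_j)}+\||D_x|^{s_p-1/2}e\|_{N(I_j)},
\]
where $F(z)=|z|^{p-1}z$. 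The nonlinear difference is controlled by the fractional Leibniz/chain rule of Proposition $2.4$: splitting via H\"older to match the scaling of the $N$, $W$, $S$ norms, the $F'$-piece is bounded by $(\|\tilde u\|_{S(I_j)}^{p-1}+\|w\|_{S(I_j)}^{p-1})\,\||D_x|^{s_p-1/2}w\|_{W(I_j)}$, and the $F''$-piece (for which I use $p\geq 2$, valid throughout our range of $s_p\geq\tfrac12$ with $d=4,5$) contributes terms of analogous structure. Provided $\eta$ is sufficiently small, the $\tilde u$ contribution is absorbed on the left.

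Given these bounds, I would induct on $j$. Set $\gamma_j:=\|\vec w(t_j)\|_{\energysp}+\|S(\cdot-t_j)\vec w(t_j)\|_{S(I_j)}$, so $\gamma_0\leq A'+\epsilon$. On $I_j$, a continuity argument starting from $\vec w(t_j)$ (closed using smallness of $\eta$ and of $\gamma_j$) yields a solution $u$ on $I_j$ with
\[
\|w\|_{S(I_j)}+\||D_x|^{s_p-1/2}w\|_{W(I_j)}+\sup_{t\in I_j}\|\vec w(t)\|_{\energysp}\leq C_1(\gamma_j+\epsilon).
\]
Transporting this control to $\vec w(t_{j+1})$ via the Strichartz estimate again produces $\gamma_{j+1}\leq C_2\gamma_j+C_2\epsilon$, so $\gamma_j\leq (2C_2)^{j}(A'+\epsilon)$. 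Because $J$ depends only on $M,A,A'$, choosing $\epsilon_0=\epsilon_0(M,A,A')$ small keeps all $\gamma_j$ small enough to close the continuity argument at each stage. Summing the $I_j$-bounds gives $\|u\|_{S(I)}\leq C(M,A,A')$ and $\sup_{t\in I}\|\vec w(t)\|_{\energysp}\leq C(M,A,A')(A'+\epsilon)$. The exponent $\alpha>0$ in the statement arises naturally when one interpolates between $\|w\|_{S(I_j)}$ and the Strichartz data using the fractional power $p-1$; e.g.\ bounding $\|w\|_{S(I_j)}^{p-1}$ times a small Strichartz norm by $\epsilon^{\alpha}$ with $\alpha$ some positive number depending on $p$.

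The main obstacle is the fractional--derivative estimate on the nonlinear difference $F(\tilde u+w)-F(\tilde u)$ in the $N$-norm. Selecting H\"older exponents so that both the $F'$-term and the $F''$-term in Proposition $2.4$ reorganize into the quantities we control (powers of the $S$-norm paired with $\||D_x|^{s_p-1/2}\cdot\|_{W}$) requires careful scaling bookkeeping, and is the only step that is sensitive to the precise values of $d$ and $s_p$. Once this estimate is in place, the subdivision plus continuity iteration is routine and yields both existence of $u$ on $I$ and the quantitative bounds claimed.
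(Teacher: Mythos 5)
The paper itself does not write out a proof of this proposition; it defers to the standard references \cite{km06}, \cite{km08}, \cite{km10}, \cite{km11}, so in that sense your subdivide-and-iterate outline is the ``same approach'' as the one the paper has in mind. However, there is one technical point the paper explicitly flags immediately after the statement, and your outline overlooks it. The paper remarks that \emph{the Strichartz estimates as stated in Proposition 2.5 do not quite suffice to prove Proposition \ref{longtime}}: one needs inhomogeneous Strichartz estimates for the Duhamel operator
\[
F \longmapsto \int_0^t \frac{\sin\bigl((t-s)\sqrt{-\Delta}\bigr)}{\sqrt{-\Delta}} F(s)\, ds
\]
that hold for a \emph{wider range of exponent pairs} than the wave-admissible ones in Proposition 2.5 (this is Corollary 8.7 of Taggart \cite{tag}), and it points to \cite{km11} for the details of how they are used.

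Your proof invokes Proposition 2.5 directly for every Strichartz application, in particular in the transport step ``Transporting this control to $\vec w(t_{j+1})$ via the Strichartz estimate again produces $\gamma_{j+1}\leq C_2\gamma_j+C_2\epsilon$.'' This is exactly where the difficulty hides: to relate $\gamma_{j+1}$ to $\gamma_j$ one must write $S(t-t_{j+1})\vec w(t_{j+1}) = S(t-t_j)\vec w(t_j) + \int_{t_j}^{t_{j+1}} S(t-s)\,(0,G(s))\,ds$ and estimate the truncated Duhamel integral (source supported on $I_j$, solution evaluated on $I_{j+1}$) in the $S$-norm, while the H\"older splitting coming from the $F'$ and $F''$ pieces of Proposition 2.4 pushes one outside the admissible pairs $(q,r)$, $(a,b)$ allowed by Proposition 2.5 at regularity $s_p$. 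To make the iteration close you need the non-admissible inhomogeneous estimates of \cite{tag}. Your sketch treats this as ``careful scaling bookkeeping'' but presents it as if Propositions 2.4 and 2.5 alone would supply all the norms; the paper tells you they do not. Apart from this omission, the structure (partition $I$ so that $\|\tilde u\|_{S(I_j)}\leq\eta$, continuity argument on each piece with the $\gamma_j$ bookkeeping, the extra $\epsilon^\alpha$ from the fractional power $p-1$ in the chain rule, and the observation that $p\geq 2$ for $s_p\geq 1/2$ in $d=4,5$) matches the cited proofs.
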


We remark that the Strichartz estimates as previously stated do not quite suffice to prove Proposition \ref{longtime}.  One needs 
Strichartz estimates for the Duhamel operator
\begin{align*}
\left \| \int_0^t \frac{\sin((t-s)\sqrt{-\Delta})}{\sqrt{-\Delta}} F(s) ds \right \|_{L^q_tL^r_x} \lesssim \| F \|_{L^{a'}_tL^{b'}_x}
\end{align*}
that hold for a wider range of pairs (Corollary 8.7 of \cite{tag}).  Again, see for example
\cite{km11} for details.

From the local well--posedness theory and Strichartz estimates, one has the following 
criteria for blow--up and scattering:
\begin{description}
 \item[Blow--up criterion] if $T_+ < +\infty$, then for all $T \in I_{\max}(u)$ we have $\| u \|_{S(T,T_+)} = +\infty$, 
\item[Scattering criterion] there exists $T \in I_{\max}(u)$ such that  $\| u \|_{S(T,T_+)} < +\infty$ if and only if $T_+ = +\infty$ and 
$u$ scatters at $+\infty$, i.e. there exists a solution $v_L$ to the free wave equation $\partial_t^2 v_L - \Delta v_L = 0$ such that 
\begin{align*}
\lim_{t \rightarrow +\infty} \| \vec u(t) - \vec v_L(t) \|_{\energysp} = 0.
\end{align*}
\end{description}
Similar statements hold in the negative time direction.

By Strichartz estimates, the scattering criterion, and the local well--posedness theory we have the following 
small data theory:
\begin{description}
 \item[Small data theory] there exists $\bar \delta > 0$ such that
if $\| (u_0,u_1) \|_{\energysp} < \bar \delta$, then the unique solution $u$ to \eqref{nlw} with initial data $(u_0,u_1)$ is 
globally defined and scatters at $\pm \infty$.  Moreover, $u$ satisfies the estimate
\begin{align*}
\sup_{t \in \R} \| \vec u(t) \|_{\energysp} + \| u \|_{S(\R)} + \| |D_x|^{s_p - 1/2} u \|_{W(\R)} \leq C \| (u_0,u_1) \|_{\energysp}.
\end{align*}

\end{description}  

\subsection{Concentration compactness}

We now summarize the method of concentration compactness applied in our setting.  We use the notation from \cite{km10}. For 
$(u_0, u_1) \in \energysp$, we denote the solution to \eqref{nlw}  with initial data $(u_0,u_1)$ by $u$ which is defined 
on $I_{\max}(u) = (T_-,T_+)$. For $A > 0$, we define 
\begin{align*}
\mathcal{B}(A) := \left \{ (u_0,u_1) \in \energysp : \sup_{t \in I_{\max}(u)} \| \vec u(t) \|_{\energysp} \leq A  \right \}.
\end{align*}
We say that the property $\mathcal{SC}(A)$ holds if for all $(u_0,u_1) \in \mathcal{B}(A)$ we have 
$\| u \|_{S\left (I_{\max}(u)\right )} < +\infty$.  We say that $\mathcal{SC}(A; \vec u)$ holds if $\vec u(0) \in \mathcal{B}(A)$
and $\| u \|_{S\left (I_{\max}(u)\right )} < +\infty$. Recall from the comments following the local well--posedness theory, that
$\| u \|_{S\left (I_{\max}(u)\right )} < +\infty$ is equivalent to the statement that $u$ is globally defined and scatters. 

We now assume that Theorem \ref{thm1} is false.  By the small data theory, we have for 
all $A < \bar \delta$, $\mathcal{SC}(A)$ holds.  We then define
\begin{align*}
A_C := \sup \{ A > 0 : \mathcal{SC}(A) \mbox{ holds } \} > 0. 
\end{align*}
Our assumption that Theorem \ref{thm1} is false is equivalent to the statement that $A_C < +\infty$. 

In order to state the conclusion assuming Theorem \ref{thm1} is false, we need the following definition.

\begin{defn}
Let $I$ be a time interval and let $u$ be a solution to \eqref{nlw} on $I$.  We say that $u$ has the compactness property on $I$
if there exists a continuous function $N : I \rightarrow (0,+\infty)$ such that the set
\begin{align}
K = \left \{ \left ( \frac{1}{N(t)^{\frac{2}{p-1}}} u\left (t, \frac{\cdot}{N(t)} \right ) ,
\frac{1}{N(t)^{\frac{2}{p-1}+1}} \partial_t u\left (t, \frac{\cdot}{N(t)} \right )  \right ) : t \in I \right \}
\end{align}
is pre--compact in $\energysp$.
\end{defn}

We say that $u$ has the compactness property if $u$ has the compactness property on $I_{\max}(u)$.  We remark here
that by Arzela-Ascoli if $u$ has the compactness property on $I$, then for every $\eta > 0$, there exists a constant
$C = C(\eta)$ such that
\begin{align}
\int_{|x| \geq C(\eta)/N(t)} \left | (-\Delta)^{s_p/2} u(t) \right |^2 dx + 
\int_{|\xi| \geq C(\eta)N(t)} |\xi|^{2s_p}\left  |\hat u(t) \right |^2 d\xi < \eta, \\
\int_{|x| \geq C(\eta)/N(t)} \left | (-\Delta)^{(s_p-1)/2} \partial_t u(t) \right |^2 dx + 
\int_{|\xi| \geq C(\eta)N(t)} |\xi|^{2(s_p-1)} \left |\hat \partial_t u(t) \right |^2 d\xi < \eta 
\end{align}
for all $t \in I$.

Using Bahouri--Gerard profile decompositions (see \cite{fanelli} for $s_p > 1/2$ and \cite{ramos} for $s_p = 1/2$), the local well--posedness theory, and the long time 
perturbation theory, one reaches the following conclusion assuming Theorem \ref{thm1} is false.  For complete details see 
for example \cite{km10} \cite{shen} \cite{kill0} \cite{murphy} \cite{tao} \cite{tao2}. 

\begin{ppn}\label{contra}
Suppose that Theorem \ref{thm1} is false.  Then there exists a solution $u$ to \eqref{nlw} with the compactness property such that 
$\mathcal{SC}(A_C, \vec u)$ fails.  Moreover, we may assume that $I_{\max}(u) = (T_-,+\infty)$ and 
that one of the following cases holds: 
\begin{itemize}
\item $\liminf_{t \rightarrow +\infty} N(t) = 0$ (frequency cascade case),
\item $N(t) \equiv 1$ for all $t$ and $T_- = -\infty$ (soliton--like case). 
\end{itemize}
\end{ppn}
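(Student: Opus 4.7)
The plan is to follow the standard concentration-compactness/rigidity scheme of Kenig--Merle. By the definition of $A_C$, I can select a sequence of initial data $(u_{0,n},u_{1,n}) \in \mathcal{B}(A_C + 1/n)$ whose solutions $u_n$ satisfy $\|u_n\|_{S(I_{\max}(u_n))} = +\infty$. After time translations I may assume the $S$-norm of $u_n$ blows up on both sides of $t=0$. Apply the radial Bahouri--Gerard profile decomposition in $\energysp$ (available from \cite{fanelli} for $s_p > 1/2$ and \cite{ramos} for $s_p = 1/2$) to the sequence $(u_{0,n},u_{1,n})$, obtaining orthogonal profiles $V_L^j$ with parameters $(\lambda_{j,n}, t_{j,n})$, a remainder $w_n^J$ whose free evolution has arbitrarily small $S$-norm uniformly in $n$ as $J \to \infty$, and the Pythagorean decoupling of the critical norms. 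To each $V_L^j$ associate its nonlinear profile $U^j$, the unique solution of \eqref{nlw} that matches $V_L^j$ asymptotically at $\lim_n (-t_{j,n}/\lambda_{j,n})$.

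The first main step is to reduce to a single critical profile. Using the Pythagorean decoupling together with the long-time perturbation theory (Proposition \ref{longtime}), if every nonlinear profile satisfied $\mathcal{SC}(A)$ for some $A < A_C$, the superposition of the rescaled $U^j$ would serve as an approximate solution with controlled $S$-norm, forcing $\|u_n\|_{S(I_{\max}(u_n))} < +\infty$ for large $n$ and contradicting our choice. Hence at least one profile, call it $U^1 =: u$, must fail the scattering criterion; since the critical norms of the profiles add up to at most $A_C$ in the limit, only this single profile can be nontrivial, it must saturate $\sup_t \|\vec u(t)\|_{\energysp} = A_C$, and the remainder $w_n^J$ tends to $0$ in $\energysp$.

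The compactness property of the extracted solution $u$ is then obtained by applying the same profile decomposition to $\vec u(t_n)$ for an arbitrary sequence $t_n \in I_{\max}(u)$. Criticality of $u$ again forces the decomposition to collapse to a single profile with parameters $(\lambda_n, \tau_n)$; setting $N(t_n) := 1/\lambda_n$ and interpolating gives a continuous $N : I_{\max}(u) \to (0,+\infty)$ for which the rescaled orbit $K$ is precompact in $\energysp$.

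The final and most delicate step is the rigidity of $N(t)$. First one excludes $T_+ < \infty$: if $T_+$ were finite, a combination of finite speed of propagation and precompactness of $K$ forces $N(t) \to +\infty$ at a controlled rate, after which a rescaling and extraction argument produces a self-similar-type limit solution that must be identically zero by compactness, contradicting that $u$ itself is the corresponding scaled limit. After time reversal one has $I_{\max}(u) = (T_-, +\infty)$. Then split according to whether $\liminf_{t \to +\infty} N(t) = 0$ (the frequency cascade case) or $\inf_{t \geq 0} N(t) > 0$; in the latter, rescaling along a sequence $s_n \to +\infty$ with $N(s_n)$ bounded above and below and extracting a limit $u_\infty$ using the compactness of $K$ yields a new critical element still with the compactness property, for which one verifies $N_\infty(t) \equiv 1$ and $T_-(u_\infty) = -\infty$. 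I expect this rigidity extraction---ensuring that the extracted limit is nonzero, still critical, and that its maximal past time is $-\infty$---to be the main obstacle, because it requires carefully combining the precompactness of $K$ with the local well-posedness theory to control the modulation $N(t)$ near both endpoints of the existence interval.
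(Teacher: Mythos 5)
The paper itself does not supply a proof of this proposition; it refers to the standard concentration--compactness literature (Kenig--Merle \cite{km10}, Shen \cite{shen}, Killip--Tao--Visan \cite{kill0}, Murphy \cite{murphy}, Tao--Visan--Zhang \cite{tao}, \cite{tao2}). Your outline reproduces the standard scheme used in those references: extraction of a minimizing sequence, radial Bahouri--Gerard profile decomposition, reduction to a single profile via Pythagorean decoupling plus long--time perturbation theory, derivation of the compactness property by re--applying the decomposition along arbitrary time sequences, and a rescaling/extraction argument to normalize $N(t)$. On those points the proposal is essentially in line with the cited proofs.

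There is, however, a genuine gap in your treatment of the step excluding $T_+ < \infty$. You assert that a rescaled limit of a self--similar--type critical element ``must be identically zero by compactness, contradicting that $u$ itself is the corresponding scaled limit.'' This is not correct: a compactness limit of nonzero critical elements is a nonzero critical element with the compactness property, and nothing at this stage of the argument identifies it as zero. Indeed, the paper explicitly points out that for $s_p \geq 1$ ruling out self--similar blow--up requires substantial additional work (see the remarks following Proposition \ref{prop3}), and in the present subcritical range the self--similar/finite--time--blow--up scenario is not excluded in Proposition \ref{contra} at all -- it is absorbed into the frequency cascade case. Concretely: after a time reversal (to arrange $T_+=+\infty$ when $T_-=-\infty$) or after a rescaling/extraction along a sequence of times near the finite endpoint, one obtains a critical element on $(T_-,+\infty)$ with $T_-$ possibly finite; the ensuing dichotomy is then governed by $\liminf_{t\to+\infty} N(t)$, not by any vanishing of the limit. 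The actual elimination of the frequency cascade / self--similar scenario occurs only later, in Proposition \ref{noenergycas}, after the $\dot H^1\times L^2$ bound of Proposition \ref{prop2} and the Killip--Stovall--Visan blow--up result (Proposition \ref{prop3}) are available. You should replace the ``limit is zero by compactness'' step with the standard rescaling/extraction dichotomy from \cite{kill0} or \cite{tao}.
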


In the remainder of this work, we show that if $u$ is as in Proposition \ref{contra}, then $\vec u = 0$, a contradiction.  Thus,
our main result, Theorem \ref{thm1}, is true. 
 
Although the notion of solutions with the compactness property may at first seem like a construct specific to the 
concentration compactness approach, it has been recently proven that this is not necessarily the case.  In particular, in \cite{dkm6} it was
shown that for any nonlinear dispersive equation with a strong enough local well--posedness theory and a notion of a profile decomposition, any type II solution 
that blows--up in finite time or does not scatter must converge weakly to a nonzero solution with the compactness property.  So, in a 
sense, solutions with the compactness property arise naturally when studying the long time asymptotics of type II solutions to 
dispersive equations.  

A standard fact about solutions to \eqref{nlw} with the compactness property is that the linear part converges weakly to 0
as $t \rightarrow T_-$ or $t \rightarrow T_+$.  In fact this is deduced from the stronger statement that any Strichartz norm 
of the linear part vanishes asymptotically as $t \rightarrow T_-$ or $t \rightarrow T_+$ (see \cite{tao2} Section 6 or \cite{shen} Proposition
3.6).  The following lemma will be crucial in establishing higher regularity for solutions to \eqref{nlw} with the compactness property. 

\begin{lem}\label{lem1}
Let $u$ be a solution to \eqref{nlw} on $I_{\max}(u) = (T_-,+\infty)$ with the compactness property.  Then for any $t_0 \in I$ we have
\begin{align}
&\int_{t_0}^T S(t_0 - t)(0, \mu |u|^{p-1}u) dt \rightharpoonup \vec u(t_0) \quad \mbox{as } T \rightarrow +\infty \mbox{ in }
\energysp \\
-&\int^{t_0}_T S(t_0 - t)(0, \mu |u|^{p-1}u) dt \rightharpoonup \vec u(t_0) \quad \mbox{as } T \rightarrow T_- \mbox{ in }
\energysp.
\end{align}
\end{lem}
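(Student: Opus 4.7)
The plan is to use the Duhamel formula to reduce both assertions to a single weak-convergence claim. For $T > t_0$ the Duhamel identity reads
$$\vec u(t_0) = S(t_0 - T)\vec u(T) + \int_{t_0}^T S(t_0 - s)(0, \mu |u|^{p-1}u)\,ds,$$
with a sign-flipped analogue for $T < t_0$, so both assertions of the lemma are equivalent to
$$S(t_0 - T)\vec u(T) \rightharpoonup 0 \text{ in } \energysp \text{ as } T \to +\infty \text{ and as } T \to T_-.$$
Since $S(t)$ is unitary on $\energysp$ and $\vec u$ is uniformly bounded there by the compactness assumption, the family $\{S(t_0 - T)\vec u(T)\}$ is bounded in $\energysp$; by Banach--Alaoglu combined with uniqueness of weak limits, it suffices to prove that every weakly convergent subsequence has limit $0$.

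Fix $T_n \to +\infty$ with $S(t_0 - T_n)\vec u(T_n) \rightharpoonup \vec w$ (the $T_n \to T_-$ argument is identical). By unitarity of $S$,
$$\la \vec w, \vec\phi\ra = \lim_{n \to \infty}\la \vec u(T_n), S(T_n - t_0)\vec\phi\ra$$
for every $\vec\phi$ in the dual $\dot H^{-s_p}\times \dot H^{1-s_p}$, and a dense subclass sufficient for the argument is $\vec\phi = (\phi_0, \phi_1)$ with each $\phi_i \in C_c^\infty(\R^d)$, with the further restriction, in the frequency-cascade step, that $\hat\phi_i$ be supported in a fixed annulus $\{|\xi| \sim 1\}$.

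The two cases of Proposition \ref{contra} are then handled separately. In the soliton-like case $N(t) \equiv 1$, the compactness of $K$ yields, for each $\eta > 0$, a radius $C(\eta)$ outside which $\vec u(T_n)$ carries $\energysp$-mass at most $\eta$; meanwhile, the dispersive estimate $\| S(T_n - t_0)\vec\phi\|_{L^\infty_x} \lesssim_\phi |T_n - t_0|^{-(d-1)/2}$ forces $S(T_n - t_0)\vec\phi$ to $0$ in $L^2(|x| \leq C(\eta))$. Localizing the pairing with a smooth cutoff near $|x| \leq C(\eta)$, converting the $\energysp$-bound on the cutoff of $\vec u(T_n)$ into an $L^2$-bound via local Sobolev embedding, and applying Cauchy--Schwarz yields a pairing bound of size $o_n(1) + O(\eta^{1/2})$. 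In the frequency cascade case, pass to a subsequence with $N(T_n) \to 0$; then the compactness places the $\energysp$-mass of $\vec u(T_n)$ at frequencies $|\xi| \lesssim C(\eta) N(T_n)$ up to an $\eta$-tail, while $S(T_n - t_0)\vec\phi$ retains its Fourier support in $\{|\xi| \sim 1\}$, so Littlewood--Paley orthogonality gives a pairing of size $O(\eta)$. Sending $n \to \infty$ and then $\eta \to 0$ yields $\la \vec w, \vec\phi\ra = 0$, hence $\vec w = 0$. The main technical point I expect is the bookkeeping of the fractional Sobolev duality for $s_p < 1$, handled via the embedding and Littlewood--Paley tools of the previous subsection; the argument is in essence that of \cite{tao2} Section 6 and \cite{shen} Proposition 3.6.
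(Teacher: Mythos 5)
The paper itself does not prove Lemma~\ref{lem1}; it cites \cite{tao2} (Section 6) and \cite{shen} (Proposition 3.6) for the stronger fact that the linear evolution of a compact solution vanishes asymptotically in Strichartz norms, of which weak vanishing is a corollary. Your reduction via Duhamel to ``$S(t_0-T)\vec u(T)\rightharpoonup 0$'' and the two mechanisms you invoke (dispersive decay of the free wave when $N$ is comparable to $1$, Littlewood--Paley decoupling when $N\to0$) are exactly the right ingredients.

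However, there is a gap in the case analysis, and it is not cosmetic. You organize the argument around the dichotomy of Proposition~\ref{contra} (soliton-like vs.\ frequency cascade), but Lemma~\ref{lem1} is stated for an arbitrary solution with the compactness property on $(T_-,+\infty)$ and, even granting the dichotomy, it does not cover all the subsequential behaviors of $N(T_n)$ that actually occur. Concretely: (i)~in the frequency cascade case one only knows $\liminf_{t\to+\infty}N(t)=0$, so along a given subsequence $T_n\to+\infty$ produced by Banach--Alaoglu one may have $N(T_n)$ bounded away from $0$; your $N\to0$ Fourier-support argument does not apply there, and you never fall back to the dispersive argument in that scenario. (ii)~For the backward limit $T\to T_-$ you assert the argument is ``identical,'' but if $T_-$ is finite the local theory forces $N(t)\to+\infty$ as $t\to T_-$, a case you never treat; the correct argument there is the dual of your $N\to0$ case, with the Fourier mass of $\vec u(T_n)$ escaping to $|\xi|\gtrsim N(T_n)/C(\eta)\to\infty$ while the test function has compactly supported Fourier transform. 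The clean fix is to drop Proposition~\ref{contra} entirely and split directly on the subsequential behavior of $N(T_n)$: if $N(T_n)\to0$ or $N(T_n)\to\infty$ use the Fourier-support decoupling; if $N(T_n)$ is bounded away from $0$ and $\infty$ along the subsequence, use the dispersive-decay/spatial-compactness argument. One should also be a bit more careful in the spatially-localized Cauchy--Schwarz step: for the $\partial_t u$ component one cannot get a local $L^2$ bound from $\dot H^{s_p-1}$ (since $s_p-1<0$); the correct bookkeeping is to put $(-\Delta)^{(s_p-1)/2}\partial_t u(T_n)$ and $(-\Delta)^{s_p/2}u(T_n)$ in $L^2$ (where compactness localizes them) and let the dispersive $L^\infty\to0$ decay of the free wave, combined with the bounded support, produce the $o_n(1)$ in the paired factor. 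With those repairs your outline matches the standard argument.
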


Using Lemma \ref{lem1}, we will show in the following section that a solution $u$ as in Proposition \ref{contra} satisfies
\begin{align*}
\| \vec u(t) \|_{\dot H^1 \times L^2} \lesssim N(t)^{1-s_p}. 
\end{align*}
Since we are in the subcritical regime $s_p < 1$, we have $E(\vec u) \leq 0$ if $u$ is in the frequency cascade case. 
To show that $\vec u = 0$ in the frequency cascade case, we will need the following blow--up result for solutions with nonpositive energy (see \cite{kill} Theorem 3.1). 

\begin{ppn}\label{prop3}
Assume $1/2 \leq s_p < 1$.  Let $\vec u(t) \in C\left (I_{\max}(u); (\dot H^1 \times L^2) 
\cap (\energysp)\right )$ be a solution to \eqref{nlw} on $I_{\max}(u) = (T_-,T_+)$ in the focusing case.  If $E(\vec u(t)) \leq 0$ then either
$I_{\max}(u)$ is a finite interval or $u \equiv 0$. 
\end{ppn}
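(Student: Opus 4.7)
My plan is to argue by contradiction using a Levine-type convexity argument applied to a spatially truncated variance. By the time reversibility of \eqref{nlw}, it suffices to rule out the case $T_+=+\infty$ under the assumption $u\not\equiv 0$; the same argument run backwards in time then yields $T_->-\infty$, so that $I_{\max}(u)$ is a bounded interval.

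Assume for contradiction that $u\not\equiv 0$ and $T_+=+\infty$. Fix a smooth radial cutoff $\psi\in C_c^\infty(\R^d)$ with $\psi\equiv 1$ on $\{|x|\leq 1\}$ and $\supp\psi\subset\{|x|\leq 2\}$, and set $\psi_R(x):=\psi(x/R)$. Introduce the truncated variance and momentum
\begin{align*}
y_R(t):=\int \psi_R(x)\,u(t,x)^2\,dx,\qquad I_R(t):=\int \psi_R(x)\,u(t,x)\,\partial_t u(t,x)\,dx = \tfrac12 y_R'(t),
\end{align*}
both finite since $\psi_R$ is compactly supported and $u(t)\in \dot H^1$, $\partial_t u(t)\in L^2$. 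Differentiating $I_R$ along the focusing flow $\partial_t^2 u=\Delta u+|u|^{p-1}u$ and integrating by parts yields
\begin{align*}
I_R'(t)=\int \psi_R(\partial_t u)^2 - \int \psi_R|\nabla u|^2 + \int \psi_R|u|^{p+1} + \tfrac12\int(\Delta\psi_R)\,u^2.
\end{align*}
Substituting the energy identity $\int|u|^{p+1}=(p+1)\bigl[\tfrac12\int(|\nabla u|^2+(\partial_t u)^2)-E\bigr]$ and invoking $E\leq 0$ gives, after regrouping,
\begin{align*}
I_R'(t)\;\geq\;\tfrac{p+3}{2}\int\psi_R(\partial_t u)^2+\tfrac{p-1}{2}\int\psi_R|\nabla u|^2\;-\;\mathcal E_R(t),
\end{align*}
where $\mathcal E_R(t)$ collects the cutoff error: the exterior nonlinear term $\int_{|x|\geq R}|u|^{p+1}$ and the annular contribution $\tfrac{C}{R^2}\int_{R\leq |x|\leq 2R}u^2$.

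Combining this lower bound with the Cauchy--Schwarz inequality $I_R(t)^2\leq y_R(t)\int\psi_R(\partial_t u)^2\,dx$ and absorbing $\mathcal E_R(t)$ into the main positive terms produces a Levine-type differential inequality $y_R y_R''\geq(1+\delta)(y_R')^2$ for some $\delta>0$, valid once $R$ is chosen large and $t$ is large enough that $I_R(t)>0$ (guaranteed by monotonicity of $I_R$ and $u\not\equiv 0$). Standard convexity applied to $y_R^{-\delta}$, which is concave, positive, and has negative derivative, then forces $y_R(t)\to+\infty$ in finite time. But on any compact subinterval $[0,T]$ of $[0,+\infty)$ one has $\vec u\in C([0,T];\dot H^1\times L^2)$; by Sobolev $\dot H^1\hookrightarrow L^{2d/(d-2)}$ and H\"older on the ball of radius $2R$, the quantity $y_R(t)$ is bounded on $[0,T]$. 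This contradicts $T_+=+\infty$ and completes the proof.

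The principal obstacle is the control of the cutoff error $\mathcal E_R(t)$: since $u(t)$ is not assumed to lie in $L^2(\R^d)$, there is no global variance to differentiate, and the errors introduced by truncation must be dominated by the coercive terms. This is precisely where the subcritical hypothesis $s_p<1$ enters. The strict inequality $p+1<\tfrac{2d}{d-2}$, combined with $u\in\dot H^{s_p}\cap\dot H^1$ and interpolation between the Sobolev embeddings $\dot H^{s_p}\hookrightarrow L^{d(p-1)/2}$ and $\dot H^1\hookrightarrow L^{2d/(d-2)}$, places $u(t)$ in $L^{p+1}(\R^d)$ with quantitative tail decay; the annular $L^2$ integral is likewise controlled by Sobolev on $B_{2R}$. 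Both contributions to $\mathcal E_R(t)$ can thus be made an arbitrarily small fraction of the main positive terms by taking $R$ sufficiently large. In the energy--critical or supercritical regime $s_p\geq 1$ this buffer disappears and genuinely different techniques must be used (cf.\ \cite{km08}, \cite{dkm5}, \cite{dodl2}).
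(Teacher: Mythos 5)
The paper does not prove Proposition \ref{prop3} itself; it cites Theorem 3.1 of \cite{kill} and only describes the strategy --- a Levine/Glassey convexity argument applied to a truncated virial quantity, with the truncation errors controlled using $s_p<1$. Your outline follows that description, and your formal computation of $I_R'(t)$, the substitution of the energy identity with $E\leq 0$, and the Cauchy--Schwarz reduction to a differential inequality $y_R y_R''\geq(1+\delta)(y_R')^2$ are correct in form.

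The gap is in the step where you assert that $\mathcal E_R(t)$ can be made a small fraction of the coercive terms, uniformly on the relevant time interval, simply by taking $R$ large, citing interpolation between Sobolev embeddings. Interpolation places $u(t)$ in $L^{p+1}(\R^d)$ with a bound on its \emph{norm}, but it gives no uniform-in-$t$ quantitative control of the tail $\int_{|x|\geq R}|u(t)|^{p+1}\,dx$; at each fixed $t$ the tail is small beyond some radius, but that radius depends on $t$ and, absent further information, can escape to infinity as $t$ ranges over $[t_0,T_+)$. The same is true of the annular $L^2$ term. This matters because the convexity argument requires the differential inequality to persist on the entire interval $[t_0,T^\ast)$ up to the forced blow-up time $T^\ast$ of $y_R$. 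One cannot easily repair this via continuity of $t\mapsto\vec u(t)$ on a compact interval: $T^\ast$ depends on $y_R(t_0)$ and $y_R'(t_0)$, and $y_R(t_0)=\int\psi_R u(t_0)^2$ can itself grow polynomially in $R$ since $u(t_0)\notin L^2$, creating a circularity. The standard cure --- and presumably the actual content of the cited theorem --- is to invoke finite speed of propagation: choosing the initial time so that the exterior $\dot H^1\times L^2$ and $L^{p+1}$ data outside a ball $B_{R_0}$ are small, the exterior contribution at time $t$ outside $B_{R_0+|t-t_0|}$ remains small, which forces one to use a cutoff whose radius grows in time and introduces further boundary terms in the virial computation. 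It is in estimating those terms that the hypothesis $s_p<1$ does its work; the role of $s_p<1$ is genuinely more delicate than the interpolation you invoke. As written, the proposal does not close the argument.
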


The proof of Proposition \ref{prop3} uses the formal virial 
identity
\begin{align*}
\frac{d^2}{dt^2} \int |u(t)|^2 dx = 2 \int |\partial_t u(t)|^2 dx - 2 \int |\nabla u(t)|^2 dx + 2 \int |u(t)|^{p+1} dx. 
\end{align*}
and convexity arguments based on the works \cite{levine} \cite{glass}.  However, since $u(t)$ is not necessarily in $L^2$, one must truncate and show that all errors 
are suitably small in order to close the argument.  It is in showing the errors are small that the subcritical assumption 
$s_p < 1$ becomes crucial.  

\section{Higher Regularity for Compact Solutions}

In this section, we show that a solution $u$ to \eqref{nlw} on $I_{\max}(u) = (T_-,+\infty)$ with the compactness property has more
regularity than $\energysp$.  This is achieved in two steps.  The first step is contained in the following proposition.

\begin{ppn}\label{prop1}
Let $s_0 \in (s_p, 1)$ be defined by the relation
$$
s_0 = \frac{d}{2} - \frac{d+2}{2p}.
$$
Let $u$ be a solution to \eqref{nlw} on $(T_-,+\infty)$ with the compactness property.  Then for every $t \in (T_-, +\infty)$,
$\vec u(t) \in \dot H^{s_0} \times \dot H^{s_0 - 1}$ and 
$$
\| \vec u(t) \|_{\dot H^{s_0} \times \dot H^{s_0 - 1}} \lesssim N(t)^{s_0 - s_p}.
$$
\end{ppn}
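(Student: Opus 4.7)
The approach is the double Duhamel trick, enabled by Lemma \ref{lem1}, wrapped inside a frequency envelope bootstrap. First, using the scale invariance of \eqref{nlw}, I reduce to the claim that $\| \vec u(t_0) \|_{\dot H^{s_0} \times \dot H^{s_0 - 1}} \lesssim 1$ uniformly over $t_0$ with $N(t_0) = 1$. Indeed, $v(s, y) := N(t_0)^{-2/(p-1)} u \bigl( t_0 + s/N(t_0), y/N(t_0) \bigr)$ is another solution to \eqref{nlw} whose rescaled trajectory lies in the same pre--compact set $K$, so any bound depending only on $K$ applies to $v$ at $s = 0$; rescaling back reintroduces the $N(t_0)^{s_0 - s_p}$ factor.

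I then define the frequency envelope
\begin{align*}
\alpha_k := \sup_{\{ t_0 \, : \, N(t_0) = 1 \}} \| P_k \vec u(t_0) \|_{\dot H^{s_p} \times \dot H^{s_p - 1}},
\end{align*}
which is $\ell^2$--summable by compactness of $K$. It suffices to upgrade this into the quantitative decay $\alpha_k \lesssim 2^{-(s_0 - s_p) k}$ for $k \geq 0$. I proceed by a bootstrap: given a provisional decay $\alpha_k \lesssim 2^{-\sigma k}$ for some $\sigma \in [0, s_0 - s_p)$, I derive a strictly faster rate. The input per iteration is the identity obtained by applying $P_k$ to the two weak limits in Lemma \ref{lem1} and pairing them in $\dot H^{s_p} \times \dot H^{s_p - 1}$,
\begin{align*}
\| P_k \vec u(t_0) \|^2_{\dot H^{s_p} \times \dot H^{s_p - 1}} = \pm \int_{t_0}^{+\infty} \! \int_{-\infty}^{t_0} \bigl\langle S(t_0 - t) P_k (0, F(t)), \, S(t_0 - s) P_k (0, F(s)) \bigr\rangle \, ds \, dt,
\end{align*}
where $F = \mu |u|^{p-1} u$. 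Using unitarity to combine the propagators into a single $S(s - t)$ with $s - t < 0$, I apply frequency-localized dispersive and Strichartz estimates (together with radial Sobolev embeddings and the fractional chain rule for $|u|^{p-1} u$) to bound the integrand by envelope-controlled nonlinearity norms multiplied by a suitably decaying function of $2^k |t - s|$, and then integrate.

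I expect the main obstacle to be the calibration of Lebesgue and Strichartz exponents so that three conditions hold simultaneously: (i) the fractional chain rule applied to $|u|^{p-1} u$ does not consume the regularity gain (since $p$ need not be an integer), (ii) the double integral over $\{s < t_0 < t\}$ is absolutely convergent with integrand decaying sufficiently in $|t - s|$, and (iii) the iteration on $\sigma$ can be driven all the way up to the sharp threshold $s_0 - s_p$ dictated by $s_0 = d/2 - (d+2)/(2p)$, but no further. The value $s_0$ should be exactly the regularity at which the double Duhamel integrand ceases to be absolutely convergent under the available dispersive decay, so terminating the iteration sharply at $s_0$ will be the delicate point. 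This refines the $3d$ cubic argument of \cite{dodl1} to the present subcritical setting with general non-integer $p$ in dimensions $d = 4, 5$.
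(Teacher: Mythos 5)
Your high-level strategy is correct and matches the paper's: reduce by scaling to $N(t_0) = 1$, set up a frequency envelope, and extract the gain of regularity from a double Duhamel pairing enabled by the weak-convergence Lemma \ref{lem1}. However, there are two genuine gaps in the proposal.

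\textbf{The double Duhamel integral is not absolutely convergent.} You claim that after combining the propagators into $S(s-t)$ you can bound the integrand by ``envelope-controlled nonlinearity norms multiplied by a suitably decaying function of $2^k|t-s|$, and then integrate,'' and you identify absolute convergence of the $\{s < t_0 < t\}$ integral as the main calibration issue. This is not how the argument closes. The half--wave propagator's dispersive decay $(2^k|t|)^{-(d-1)/2}$ combined with any Lebesgue-space control of $|u|^{p-1}u$ that is available from critical-regularity data does not make the untruncated double integral absolutely convergent --- that is precisely why weak convergence in Lemma \ref{lem1}, rather than norm convergence, is all that is available. The paper's proof instead splits each Duhamel integral into a near-time piece on $[0, \delta/N(0)]$ (controlled by the local Strichartz input of Lemma \ref{lem3} and Claim \ref{clm1}) plus a far-time piece, and the far-time piece is further decomposed using a spatial cutoff $\varphi(x/t)$ into a part $A$ supported away from the light cone, controlled via radial Sobolev embedding, and a part $B$ supported well inside the cone. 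The only reason $\la B, B' \ra$ is small is that the support constraints force $|x-y| \leq \tfrac12(|t|+|\tau|)$, so the kernel of $P_k^2\,e^{i(\tau-t)\sqrt{-\Delta}}(-\Delta)^{s_p-1}$ is estimated by \emph{non-stationary phase}, giving arbitrary polynomial decay $\la 2^k(|t|+|\tau|)\ra^{-L}$ --- decay that dispersive estimates alone cannot supply. Meanwhile the cross terms $\la A, A'+B' \ra$, $\la A+B, A' \ra$ are not bounded absolutely at all; they are evaluated by taking the $T_1, T_2$ limits via Lemma \ref{lem1} and pairing against $P_k v(0)$. Omitting the spatial cutoff and treating the whole thing as an absolutely convergent oscillatory integral is a step that would fail.

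\textbf{The role of $s_0$ is misidentified.} You predict that ``$s_0$ should be exactly the regularity at which the double Duhamel integrand ceases to be absolutely convergent,'' and that ``terminating the iteration sharply at $s_0$ will be the delicate point.'' In fact the paper's double Duhamel iteration is not terminal at $s_0$: the proof establishes Proposition \ref{prop5}, the uniform bound $\| \vec u(t) \|_{\dot H^s \times \dot H^{s-1}} \lesssim N(t)^{s-s_p}$ for \emph{all} $s \in [s_p, 1)$, and then Proposition \ref{prop2} pushes to $s=1$ by the same device. The iteration (Lemmas \ref{lem6} and \ref{lem7}) gains a fixed amount $\epsilon_p$ of regularity per step, shrinking to $\min\{1-s,\epsilon_p\}$ as $s \to 1$, but never stops at $s_0$. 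The number $s_0 = \tfrac{d}{2} - \tfrac{d+2}{2p}$ is chosen for an entirely different reason: it is exactly the regularity for which the Sobolev embedding of $\dot H^{s_0}$ (together with a suitable interpolation of Strichartz exponents in Lemma \ref{lem3}) yields $u \in L^p_t L^{2p}_x$ locally with the scaling-correct bound $N(t_0)^{s_0-s_p}$, hence $|u|^{p-1}u \in L^1_t L^2_x$ with bound $N(t_0)^{p(s_0-s_p)} = N(t_0)^{1-s_p}$, which is the key input to the $\dot H^1 \times L^2$ regularity in Proposition \ref{prop2}. You also need the identity $s_0 - s_p = \tfrac{1}{p}(1-s_p)$ at several points to close the powers of $N(0)$; the quantity $s_0$ is an embedding exponent, not a convergence threshold for the Duhamel pairing.

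A third, smaller omission: your frequency envelope involves only $\|P_k \vec u(t_0)\|_{\dot H^{s_p} \times \dot H^{s_p-1}}$. To control the near-time Duhamel piece one needs the envelope to include frequency-localized Strichartz norms of $P_k u$ on the local interval $J = [t_0 - \delta/N(0), t_0 + \delta/N(0)]$ (this is the role of the terms $2^{\beta k}\|P_k u\|_{W(J)}$ in Claim \ref{clm1}, together with the smallness $\|u\|_{S(J)} < \eta$ of Lemma \ref{lem2}), so that the fractional chain rule applied to $|u|^{p-1}u$ returns a contribution summable against $\sum_j 2^{-\beta|j-k|}a_j$. Without these Strichartz components of the envelope, the nonlinear estimate in the bootstrap does not close.
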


Using Proposition \ref{prop1}, we then show that $\vec u(t)$ is in the energy space $\dot H^1 \times L^2$ with a size estimate in time.

\begin{ppn}\label{prop2}
Let $u$ be a solution to \eqref{nlw} on $(T_-, +\infty)$ with the compactness property.  Then for every $t \in (T_-, +\infty)$,
$\vec u(t) \in \dot H^1 \times L^2$ and 
\begin{align}
\| \vec u(t) \|_{\dot H^1 \times L^2} \lesssim N(t)^{1-s_p}.
\end{align}
\end{ppn}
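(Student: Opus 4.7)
My plan is to bootstrap the regularity gain provided by Proposition \ref{prop1} until the regularity reaches the energy level. Concretely, I would establish the following induction step: if for some $s \in [s_0, 1)$ the solution satisfies $\vec u(t) \in \dot H^s \times \dot H^{s-1}$ with $\|\vec u(t)\|_{\dot H^s \times \dot H^{s-1}} \lesssim N(t)^{s - s_p}$ for all $t \in (T_-, +\infty)$, then there is a fixed $\delta = \delta(d,p) > 0$ such that the same conclusion holds with $s$ replaced by $\min(s+\delta, 1)$. Starting from $s = s_0$ (the output of Proposition \ref{prop1}) and iterating finitely many times yields $\vec u(t) \in \dot H^1 \times L^2$ together with the claimed bound.

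The inductive step is proved by the double Duhamel trick based on Lemma \ref{lem1}. After rescaling so that $N(t_0) = 1$ at the time $t_0$ of interest, Lemma \ref{lem1} lets one represent $P_k \vec u(t_0)$ simultaneously as a forward Duhamel integral over $(t_0, +\infty)$ and as a backward Duhamel integral over $(T_-, t_0)$ of the frequency-localized nonlinearity $(0, \mu P_k(|u|^{p-1}u))$. Pairing these two representations in the $\dot H^s \times \dot H^{s-1}$ inner product and invoking the unitarity of $S(\cdot)$ in this norm converts $\|P_k \vec u(t_0)\|^2_{\dot H^s \times \dot H^{s-1}}$ into a double time integral whose kernel $\langle S(\sigma - \tau)(0, P_k(|u|^{p-1}u)(\tau)), (0, P_k(|u|^{p-1}u)(\sigma)) \rangle$ exhibits dispersive decay in $|\sigma - \tau|$. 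This decay, sharper as $k$ grows, is the source of the extra regularity.

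To estimate the pairing, I would control $P_k(|u|^{p-1}u)$ in Strichartz-dual spaces via the fractional chain and Leibniz rules from Section 2, combined with the compactness property. Compactness yields uniform $L^{p+1}_x$ bounds on $u(t)$ (after rescaling so that $N(t)=1$) via the Sobolev embedding $\dot H^{s_p} \hookrightarrow L^{p+1}_x$, while the inductive hypothesis delivers improved Strichartz integrability for $|D_x|^s u$. Organizing the outcome with a frequency envelope and summing over dyadic frequencies gives the improved estimate at level $s+\delta$; the correct $N(t_0)$ scaling then follows from the scaling symmetry of \eqref{nlw} once the time $t_0$ is rescaled back.

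The main difficulty lies in choosing the Strichartz exponents so that the fractional chain rule, the dispersive pairing, and the compactness-derived integrability bounds close consistently at every iteration, and in ensuring that the gain $\delta$ does not shrink to zero before $s=1$ is reached. The subcritical condition $s_p < 1$---equivalently, that $|u|^{p-1}$ has sufficient integrability to gain derivatives from $u$---leaves enough room between $s_p$ and $1$ to accommodate these issues; the endpoint $s = 1$, where certain exponents become borderline, requires the most care.
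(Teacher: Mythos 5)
Your plan contains a genuine gap at the heart of the matter.  You claim an induction step with a \emph{fixed} gain $\delta = \delta(d,p) > 0$ that carries regularity from $s$ to $\min(s+\delta,1)$, so that finitely many iterations reach $\dot H^1 \times L^2$.  But the double--Duhamel/frequency-envelope estimate does not yield a uniform gain: the tail contribution of the high-frequency ``$A$'' piece is of the form $b_k \sim 2^{-k(1-s-\epsilon)} N(0)^{1-s_p}$ for $2^k \geq N(0)$, and $\ell^2$-summability of $\{b_k\}$ forces $\epsilon < 1-s$.  This is precisely the content of Lemma~\ref{lem7} in the paper, which proves the gain to be at most $\epsilon_s = \min\{1-s, \epsilon_p\}$; the increment necessarily collapses to zero as $s \uparrow 1$.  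Consequently, the bootstrap you outline can prove Proposition~\ref{prop5} (regularity $\dot H^s \times \dot H^{s-1}$ for every $s<1$) --- and the paper does exactly that to establish Proposition~\ref{prop1} --- but it cannot reach the endpoint $s=1$.  Your last paragraph acknowledges that the endpoint ``requires the most care'' but supplies no mechanism to close it; that missing mechanism is the actual content of Proposition~\ref{prop2}.

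The paper's proof of Proposition~\ref{prop2} is structurally different from another iteration of the bootstrap.  It replaces Littlewood--Paley projections $P_k$ with a mollifier family $Q_M$, splits each Duhamel integral at time $\delta/N(0)$ and in space via $\varphi(x/t)$, and exploits two ingredients you do not use: (i) Lemma~\ref{lem3}, which uses the $\dot H^{s_0}$ bound from Proposition~\ref{prop1} to put $|u|^{p-1}u$ in the \emph{dual endpoint} space $L^1_t L^2_x$ on short intervals, with size $N(0)^{1-s_p}$; and (ii) the radial Sobolev inequality $\| |x|^\beta f \|_{L^q} \lesssim \| f \|_{\dot H^{s_p}}$ applied to the spatially exterior piece, giving the integrable-in-time decay $\| (1-\varphi(x/t)) |u|^{p-1}u \|_{L^2_x} \lesssim |t|^{s_p-2}$, again with total size $N(0)^{1-s_p}$.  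These two bounds control the ``tail'' $A$, $A'$ in $L^2_x$ uniformly in $T_1,T_2$; the ``core'' $\la B,B'\ra$ is shown to be $O(M^{-1})$ via a stationary-phase kernel estimate; and the argument closes by sending $M\to\infty$, not by summing a frequency envelope.  Without this spatial splitting and the radial decay it buys, you have no route to the endpoint.

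A smaller but real error: you assert that the compactness property gives uniform $L^{p+1}_x$ bounds on $u(t)$ ``via the Sobolev embedding $\dot H^{s_p} \hookrightarrow L^{p+1}_x$.''  That embedding is false in the energy-subcritical regime: $\dot H^{s_p}(\R^d) \hookrightarrow L^{d(p-1)/2}_x$, and $d(p-1)/2 < p+1$ precisely when $p < (d+2)/(d-2)$.  In the paper the $L^{p+1}_x$ control comes only \emph{after} Proposition~\ref{prop2} is established, by interpolating between $\dot H^{s_p}$ and $\dot H^1$.  Using it as an input to the proof of Proposition~\ref{prop2} is circular.
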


We remark that the implicit constants in the previous two propositions depend on $d$, $p$, and
\begin{align*} 
\sup_{t \in (T_-,+\infty)}
\| \vec u(t)\|_{\energysp}
\end{align*}
but not $t$. 

The proofs of these two propositions draw from the proofs of the corresponding statements 
for the $3d$ cubic equation found in Section 3 of \cite{dodl1} but contain refinements since 
we are no longer in such a specific setting.  We first prove Proposition \ref{prop2} using Proposition \ref{prop1}.

\subsection{Proof of Proposition \ref{prop2} using Proposition \ref{prop1}}  We first require the following lemma.

\begin{lem}\label{lem3}
Let $u$ satisfy the conclusion of Proposition \ref{prop1}.  Then there exists a small $\delta > 0$ and a constant
$C > 0$ such that for every $t_0 \in
I_{\max}(u)$
\begin{align}
\| u \|_{L^p_tL^{2p}_x([t_0 - \delta / N(t_0), t_0 + \delta / N(t_0)] \times \R^d)} \leq C N(t_0)^{s_0 - s_p}.
\end{align}
\end{lem}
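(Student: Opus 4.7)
The plan is to reduce to unit scale via the scaling symmetry of \eqref{nlw} and then execute a short-time Strichartz/contraction argument at the gained regularity $s_0$. Fix $t_0 \in I_{\max}(u)$, write $\lambda := N(t_0)$, and define the rescaled solution
\[
v(\tau, y) := \lambda^{-2/(p-1)} u(t_0 + \tau/\lambda,\, y/\lambda).
\]
Then $v$ again solves \eqref{nlw}, and a direct change of variables gives
\[
\|u\|_{L^p_t L^{2p}_x([t_0 - \delta/\lambda,\, t_0 + \delta/\lambda] \times \R^d)} = \lambda^{s_0 - s_p}\, \|v\|_{L^p_\tau L^{2p}_y([-\delta, \delta] \times \R^d)},
\]
the exponent $s_0 - s_p$ arising from the identity $-\tfrac{2}{p-1} + \tfrac{d+2}{2p} = s_p - s_0$. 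Hence it suffices to produce $\delta > 0$ and $C > 0$, both independent of $t_0$, for which $\|v\|_{L^p_\tau L^{2p}_y([-\delta,\delta] \times \R^d)} \leq C$.

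The crucial observation is that $(p, 2p)$ is a wave-admissible exponent pair whose Strichartz scaling matches $(s,\rho) = (s_0, 0)$. Indeed, the diagonal admissibility condition $\tfrac1p + \tfrac{d-1}{4p} \leq \tfrac{d-1}{4}$ reduces to $p \geq \tfrac{d+3}{d-1}$, which is equivalent to $s_p \geq 1/2$; and the scaling identity $\tfrac1p + \tfrac{d}{2p} = \tfrac{d+2}{2p} = \tfrac{d}{2} - s_0$ is precisely the definition of $s_0$. Applying Proposition \ref{prop1} after rescaling yields $\|\vec v(0)\|_{\dot H^{s_0} \times \dot H^{s_0-1}} \leq C$ uniformly in $t_0$, so the homogeneous Strichartz bound gives $\|S(\tau)\vec v(0)\|_{L^p_\tau L^{2p}_y(\R \times \R^d)} \leq C$.

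It remains to control the Duhamel term on $[-\delta, \delta]$. I would apply the inhomogeneous Strichartz estimate at $(s,\rho) = (s_0, 0)$, with left pair $(p, 2p)$ and a dual admissible pair $(a, b)$ together with parameter $\mu$ chosen to accommodate the nonlinearity. The fractional Leibniz/chain rule recalled earlier --- applicable because $s_0 \in (1/2, 1)$ keeps the relevant derivative order within $(0,1)$ --- combined with H\"older in time on the interval of length $2\delta$, bounds
\[
\||D|^{-\mu}(|v|^{p-1} v)\|_{L^{a'}_\tau L^{b'}_y([-\delta,\delta])} \lesssim \delta^{\theta}\, \|v\|^{p}_{L^p_\tau L^{2p}_y([-\delta,\delta])}
\]
for some $\theta > 0$. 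Taking $\delta$ small but independent of $t_0$ allows the nonlinear contribution to be absorbed into the left-hand side, yielding $\|v\|_{L^p_\tau L^{2p}_y([-\delta,\delta])} \leq C$; rescaling back then gives the lemma.

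The main technical obstacle is the book-keeping needed to select $(a, b, \mu)$ so that the fractional Leibniz rule closes with a strictly positive gain $\delta^{\theta}$. Morally this is the same contraction underlying the persistence-of-regularity statement in the local well-posedness proposition, carried out at the subcritical regularity $s_0 \in [s_p, s_p + 1]$ and targeting the Strichartz space $L^p_\tau L^{2p}_y$ in place of the $W$-norm.
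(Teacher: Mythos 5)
Your rescaling preamble and the identification of $(p,2p)$ as a wave-admissible pair whose Strichartz scaling matches $(s,\rho)=(s_0,0)$ are both correct, and they correctly recast the lemma as a uniform short-time bound $\|v\|_{L^p_\tau L^{2p}_y([-\delta,\delta])}\leq C$ for the rescaled solution with $\|\vec v(0)\|_{\dot H^{s_0}\times\dot H^{s_0-1}}\lesssim 1$. However, the single-norm bootstrap you propose cannot close, and this is a genuine gap rather than mere book-keeping. If one asks the dual pair $(a,b,\mu)$ to produce exactly $\|v\|^p_{L^p_\tau L^{2p}_y}$ on the right side, then H\"older in space forces $pb'=2p$ (possibly after Hardy--Littlewood--Sobolev on $|D|^{-\mu}$), i.e. the effective spatial exponent hitting $|v|^p$ must be $L^2_x$. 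Tracing through the constraints: with $\mu=0$ one needs $b'=2$, hence $b=2$, and wave admissibility then forces $a=\infty$, $a'=1$, while the scaling relation $\tfrac1a+\tfrac{d}{b}=\tfrac{d-2}{2}+s_0+\mu$ yields $\mu=1-s_0\neq 0$, a contradiction. With $\mu>0$, HLS requires $\tfrac1{b'}=\tfrac12-\tfrac{\mu}{d}<\tfrac12$, so $b<2$, violating admissibility. With $\mu<0$ (a genuine fractional derivative), the chain rule produces a factor $\||D|^{|\mu|}v\|_{L^{q_2}_x}$ which is not controlled by $\|v\|_{L^{2p}_x}$ at all. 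So there is no admissible $(a,b,\mu)$ for which your displayed estimate holds with only $\|v\|_{L^p_\tau L^{2p}_y}$ on the right.

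The fix is to enlarge the contraction space to include a second norm that Strichartz provides for free, namely $L^\infty_\tau L^{q}_y$ with $q$ given by the Sobolev embedding $\dot H^{s_0}\hookrightarrow L^q$ (so $q=\tfrac{4p}{3}$ in $d=4$, $q=\tfrac{10p}{7}$ in $d=5$). This is exactly what the paper does, without rescaling: it bootstraps in $X(J)=L^p_t L^{2p}_x\cap L^\infty_t L^q_x$, takes $\mu=0$, and chooses a sharply admissible dual pair $(a,b)$ so that the spatial exponent $pb'$ interpolates strictly between $2p$ and $q$; H\"older in time then gives a positive power of $\delta$ and the continuity argument closes. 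Your rescaling is a harmless cosmetic reorganization, but as written the key Duhamel estimate is not true, and without the extra $L^\infty_\tau L^q_y$ component the argument does not go through.
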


\begin{proof}
We first note (see for example Corollary 3.6 of \cite{kill0}) that there exists $\tilde \delta > 0$ such that 
for all $t_0 \in I_{\max}(u)$, $[t_0 - \tilde \delta /N(t_0), t_0 + \tilde \delta/N(t_0)] \subset
I_{\max}(u)$. Let $\delta < \tilde \delta$ to be chosen later.  Define $J = [t_0 - \delta / N(t_0), t_0 + \delta / N(t_0)]$.  For clarity in the exposition, we consider the $4d$ and
$5d$ cases separately.  In $4d$, we have $s_0 = 2 - \frac{3}{p}$, $s_0 - s_p = \frac{3-p}{p(p-1)}$, and by Sobolev embedding
$$
\dot H^{s_0} \hookrightarrow L^{\frac{4p}{3}}.
$$
Let
$$
X(J) = L^p_tL^{2p}_x(J \times \R^4) \cap L^{\infty}_t
L_x^{\frac{4p}{3}}(J \times \R^4).
$$
Define a Strichartz admissible pair $(a,b)$ by
\begin{align*}
a' &= \frac{5p}{8p - 9}, \\
b' &= \frac{10p}{p + 12},
\end{align*}
and note that
\begin{align*}
\frac{1}{pb'} = (1- \theta) \frac{1}{2p} + \theta \frac{3}{4p} \quad \mbox{with} \quad
\theta = \frac{24 - 8p}{5p} \in (0,1).
\end{align*}
By Strichartz estimates and H\"older's inequality in time, we have that
\begin{align*}
\| u \|_{X(J)} &\lesssim \| \vec u(t_0) \|_{\dot H^{s_0} \times \dot H^{s_0-1}} +
\| |u|^p \|_{L^{a'}_t L^{b'}_x(J \times \R^4)} \\
&\lesssim N(t_0)^{s_0 - s_p} +
\left \|
\| u \|_{L^{2p}_x}^{p(1-\theta)} \| u \|^{p\theta}_{L^{4/3p}_x}
\right \|_{L^{a'}_t(J)} \\
&\lesssim N(t_0)^{s_0 - s_p} + \| u \|_{X(J)}^p \delta^{\frac{3-p}{p}} N(t_0)^{-\frac{3-p}{p}} \\
&\lesssim N(t_0)^{s_0 - s_p} + \delta^{\frac{3-p}{p}} \left ( \| u \|_{X(J)} N(t_0)^{-(s_0 - s_p)} \right )^{p-1}
\| u \|_{X(J)}.
\end{align*}
By choosing $\delta$ sufficiently small, a standard continuity argument finishes the proof in $4d$.  In $5d$,
we have $s_0 = \frac{5}{2} - \frac{7}{2p}$ and we define
$$
X(J) =  L^p_tL^{2p}_x(J \times \R^4) \cap L^{\infty}_t
L_x^{\frac{10p}{7}}(J \times \R^4).
$$
We choose a Strichartz pair $(a,b)$ by
\begin{align*}
a' &= \frac{3p}{6p - 7}, \\
b' &= \frac{6p}{7},
\end{align*}
and note that
\begin{align*}
\frac{1}{pb'} = (1-\theta) \frac{1}{2p} + \theta \frac{7}{10p} \quad \mbox{with} \quad
\theta = 5\frac{7-3p}{6p} \in (0,1).
\end{align*}
The proof proceeds as in the $4d$ case, and we omit the details.
\end{proof}

The previous lemma immediately implies that there exists a small $\delta > 0$ such that
for every $t_0 \in I_{\max}(u)$
\begin{align}
\| |u|^{p-1}u \|_{L^1_tL^{2}_x([t_0 - \delta / N(t_0), t_0 + \delta / N(t_0)] \times \R^d)} \lesssim N(t_0)^{p(s_0 - s_p)}
= N(t_0)^{1-s_p}. \label{lembd}
\end{align}

\begin{proof}[Proof of Proposition \ref{prop2}]
Let $t_0 \in I_{\max}(u)$, and fix $\delta$ from Lemma \ref{lem3}.  By time translation we may assume that $t_0 = 0$.  Define
\begin{align}
v = u + \frac{i}{\sqrt{-\Delta}} \partial_t u,
\end{align}
so that
$$
\| \vec u(t) \|_{\dot H^1 \times L^2} \simeq \| v \|_{\dot H^1}.
$$
Since $u$ solves \eqref{nlw}, $v$ solves the equation
\begin{align}
\partial_t v = -i \sqrt{-\Delta} v - \mu \frac{i}{\sqrt{-\Delta}} |u|^{p-1} u.
\end{align}
By Duhamel's principle, we have for any $T$ with $T_- < T < 0$
\begin{align}
 v(0) = e^{iT\sqrt{-\Delta}} v(T) - \mu \frac{i}{\sqrt{-\Delta}} \int^0_T e^{i\tau \sqrt{-\Delta}} |u(\tau)|^{p-1}u(\tau)
d \tau.
\end{align}
We define an approximate identity as follows.  Let $\psi \in C^\infty_0(\R^d)$ be radial such that $\| \psi \|_{L^1}
= 1$.  For $M > 0$, define $\psi_M = M^d \psi(M \cdot)$ and
\begin{align*}
Q_Mf = \psi_M \ast f.
\end{align*}
To prove Proposition \ref{prop2}, it suffices to show that for all $M \geq M_0$
$$
\| Q_M v(0) \|_{\dot H^1} \lesssim N(0)^{1-s_p}
$$
where the implied constant is independent of $M$ and $N(0)$.

A key tool in the proof is Lemma \ref{lem1}.  Let $T_- < T_1 < 0 < T_2 < T_+$ and write (by the Duhamel formula)
\begin{align*}
v(0) &= e^{iT_2 \sqrt{-\Delta}} v(T_2) + \mu \frac{i}{\sqrt{-\Delta}} \int_0^{T_2} e^{it \sqrt{-\Delta}} |u(t)|^{p-1}u(t)
d t \\
&= e^{iT_1\sqrt{-\Delta}} v(T_1) - \mu \frac{i}{\sqrt{-\Delta}} \int^0_{T_1} e^{i\tau \sqrt{-\Delta}} |u(\tau)|^{p-1}u(\tau)
d \tau.
\end{align*}
Denote the $L^2$ and $\dot H^1$ pairings by
\begin{align*}
\la f, g \ra_{L^2} &= \Re \int f \overline{g} dx, \\
\la f, g \ra_{\dot H^1} &= \la \sqrt{-\Delta} f, \sqrt{-\Delta} g \ra_{L^2}.
\end{align*}
Then we may write
\begin{align*}
\left \la Q_M v(0) , Q_M v(0) \right \ra_{\dot H^1}
= \left \la Q_M \left ( \sqrt{-\Delta}  e^{iT_2 \sqrt{-\Delta}} v(T_2) + i \mu \int_0^{T_2} e^{it \sqrt{-\Delta}} |u(t)|^{p-1}u(t) dt
\right ), \right. \\
\left. Q_M \left ( \sqrt{-\Delta}  e^{iT_1 \sqrt{-\Delta}} v(T_1) - i \mu \int^0_{T_1} e^{i\tau \sqrt{-\Delta}} |u(\tau)|^{p-1}u(\tau) d\tau
\right )  \right \ra_{L^2}.
\end{align*}

We first focus on estimating the term containing the two Duhamel integrals
\begin{align}
\left \la
Q_M \left (
\int_0^{T_2} e^{it \sqrt{-\Delta}} |u(t)|^{p-1}u(t) dt,
\right ),
Q_M \left (
\int^0_{T_1} e^{i\tau \sqrt{-\Delta}} |u(\tau)|^{p-1}u(\tau) d\tau,
\right )
\right \ra_{L^2}. \label{prop21}
\end{align}
We split the Duhamel integrals into two pieces, one of which we have good control (uniformly in $T_1,T_2$) of the nonlinearity by Lemma \ref{lem3} and radial Sobolev embedding. To do this,
we introduce a smooth cutoff in space.  Let $\varphi \in C^{\infty}_0(\R^d)$ be radial with $\varphi(x) = 1$ for $|x| \leq \frac{1}{8}$ and
$\varphi(x) = 0$ for $|x| \geq \frac{1}{4}$.  Define
\begin{align}
A &:= Q_M \left ( \int_0^{\delta/N(0)} e^{it \sqrt{-\Delta}} |u(t)|^{p-1}u(t) dt +
\int_{\delta/N(0)}^{T_2} e^{it \sqrt{-\Delta}}\left ( 1 - \varphi \left ( \frac{x}{t} \right) \right )
 |u(t)|^{p-1}u(t) dt \right ), \\
B &:= Q_M \left (\int_{\delta/N(0)}^{T_2} e^{it \sqrt{-\Delta}} \varphi \left ( \frac{x}{t} \right ) |u(t)|^{p-1}u(t) dt \right ),
\end{align}
so that
\begin{align*}
Q_M \left (\int_0^{T_2} e^{it \sqrt{-\Delta}} |u(t)|^{p-1}u(t) dt \right ) = A + B.
\end{align*}
We define $A'$ and $B'$ to be the corresponding integrals in the negative time direction so that
\begin{align*}
Q_M \left (
\int^0_{T_1} e^{i\tau \sqrt{-\Delta}} |u(\tau)|^{p-1}u(\tau) d\tau,
\right ) = A' + B'.
\end{align*}

We now estimate $A$.  By Lemma \ref{lem3}, we have that
\begin{align*}
\left \|
Q_M \left ( \int_0^{\delta/N(0)} e^{it \sqrt{-\Delta}} |u(t)|^{p-1}u(t) dt \right )
\right \|_{L^2}
\lesssim \int_0^{\delta/N(0)}  \| u \|_{L^{2p}_x}^p dt \lesssim N(0)^{1-s_p}
\end{align*}
For the second term appearing in $A$, we have
\begin{align}
\left \| Q_M  \left ( \int_{\delta/N(0)}^{T_2} e^{it \sqrt{-\Delta}}\left ( 1 - \varphi \left ( \frac{x}{t} \right) \right )
 |u(t)|^{p-1}u(t) dt \right ) \right \|_{L^2} \\
\lesssim \int_{\delta/N(0)}^{T_2} \left \| \left ( 1 - \varphi \left ( \frac{x}{t} \right) \right )
 |u(t)|^{p-1}u(t) \right \|_{L^2_x} dt
\end{align}
By radial Sobolev embedding with
\begin{align*}
\beta = \frac{2-s_p}{p}, \quad q = 2p, \quad s = s_p,
\end{align*}
we have that
\begin{align}
\left \| \left ( 1 - \varphi \left ( \frac{x}{t} \right) \right )
 |u(t)|^{p-1}u(t) \right \|_{L^2_x} \lesssim |t|^{s_p - 2} \left \| |x|^{\frac{2-s_p}{p}} u(t) \right \|^p_{L^{2p}_x}
\lesssim |t|^{s_p - 2} \| u(t) \|^p_{\dot H^{s_p}} \lesssim |t|^{s_p - 2}. \label{prop42}
\end{align}
Hence
\begin{align}
\int_{\delta/N(0)}^{T_2} \left \| \left ( 1 - \varphi \left ( \frac{x}{t} \right) \right )
 |u(t)|^{p-1}u(t) \right \|_{L^2_x} dt \lesssim N(0)^{1-s_p} \label{prop25}
\end{align}
uniformly in $T_2$.  Thus,
\begin{align}
\| A \|_{L^2} \lesssim N(0)^{1-s_p}. \label{prop22}
\end{align}
Similarly
\begin{align}
\| A' \|_{L^2} \lesssim N(0)^{1-s_p} \label{prop23}
\end{align}
uniformly in $T_1$.

We write \eqref{prop21} as a sume of pairings
$$
\la A + B , A' + B' \ra = \la A + B, A' \ra + \la A, A' + B' \ra + \la B , B' \ra - \la A, A' \ra.
$$
We estimate $\la A, A' \ra$ using \eqref{prop22} and \eqref{prop23},
\begin{align}
| \la A , A' \ra | \leq \| A \|_{L^2} \| A' \|_{L^2} \lesssim N(0)^{2(1-s_p)}
\end{align}

We claim that
\begin{align}
| \la B, B' \ra | \lesssim M^{-1},
\end{align}
where the implied constant depends on $\| u \|_{L^{\infty}_t (\energysp)}$, $\delta$, and $N(0)$ but not on $T_1$, $T_2$, nor $M$. To show this,
we use that $\la e^{it\sqrt{-\Delta}} f , g\ra = \la f , e^{-it\sqrt{-\Delta}} g \ra$ to write
\begin{align*}
\la B , B' \ra = \int_{\delta/N(0)}^{T_2} \int^{-\delta/N(0)}_{T_1}
\left \la \varphi \left ( \frac{x}{|t|} \right ) |u(t)|^{p-1}u(t), Q_M^* Q_M e^{i(\tau - t)\sqrt{-\Delta}}
\varphi \left ( \frac{y}{|\tau|} \right ) |u(\tau)|^{p-1}u(\tau) \right \ra_{L^2} d\tau dt
\end{align*}
Using the inverse Fourier transform, we note that $Q_M^* Q_M e^{i(\tau - t)\sqrt{-\Delta}}$ is given by convolution with a kernel
\begin{align}
K(x) = K(|x|) = c(d) \int_0^{\pi} \int_0^{+\infty} e^{i |x| \rho \cos \theta} e^{i(\tau - t)\rho} \left |
\hat \psi \left ( \frac{\rho}{M} \right )
\right |^2 \rho^{d-1} d\rho \sin^{d-1}\theta d\theta,
\end{align}
where $\hat \psi \in \mathcal S(\R^d)$ is the Fourier transform of $\psi$, and the integrand is written in polar coordinates on
$\R^d$ with $|\xi| = \rho$.  Since
\begin{align*}
\left (1 - \frac{d^2}{d\rho^2} \right ) e^{i\rho M (\cos \theta |x| - (t-\tau))} =
\left (1 + M^2 |\cos \theta |x| - (t-\tau)|^2 \right ) e^{i\rho M (\cos \theta |x| - (t-\tau))},
\end{align*}
we have after integrating by parts $L$ times in $\rho$ the estimate
\begin{align}
|K(x-y)| \lesssim_L \int_0^{\pi} \frac{M^{d}}{\la M |(t - \tau) - \cos \theta |x-y|| \ra^L} d\theta. \label{prop24}
\end{align}

In the spatial $L^2$ pairing appearing in the above expression for $\la B, B' \ra$, we have the constraints $\tau < -\delta/N(0) < \delta/N(0) < t$, $|x| \leq \frac{|t|}{4}$, $|y| \leq \frac{|\tau|}{4}$,
so that $|x-y| \leq \frac{1}{2}(|t| + |\tau|) = \frac{1}{2}(t-\tau)$.  Thus, we have by \eqref{prop24}
\begin{align}
|K(x-y)| \lesssim_L \frac{M^{d}}{\la M (|t| + |\tau|) \ra^L},
\end{align}
whence by H\"older's inequality in space and the Sobolev embedding $\dot H^{s_p} \hookrightarrow L^{\frac{d(p-1)}{2}}$ we have
\begin{align}
\sup_{|x| \leq |t|/4} \left | K \ast \left (\varphi \left ( \frac{y}{|\tau|} \right ) |u(\tau)|^{p-1}u(\tau)  \right )(x) \right |
&\lesssim \frac{M^{d}}{\la M (|t| + |\tau| \ra) \ra^L} \int_{|y| \leq |\tau|/4} |u(\tau)|^p dy \\
&\lesssim \frac{M^{d}|\tau|^{d-\frac{2p}{p-1}}}{\la M (|t| + |\tau|) \ra^L}
\end{align}
where the implied constant does not depend on $M$.  Thus, by choosing $L$ sufficiently large we have
\begin{align*}
\la B , B' \ra
&\lesssim \int_{\delta/N(0)}^{+\infty} \int^{-\delta/N(0)}_{-\infty} \frac{M^{d}
|t|^{d-\frac{2p}{p-1}} |\tau|^{d-\frac{2p}{p-1}}}{\la M (|t| + |\tau|) \ra^L} d\tau dt \\
&\lesssim M^{-1}
\end{align*}
where the implied constant depends on $\| u \|_{L^{\infty}_t (\energysp)}$, $\delta$, and $N(0)$.

We now turn to estimating the remaining terms $\la A , A' + B' \ra$ and $\la A + B , A' \ra$.  We will focus on the term
$\la A , A' + B' \ra$ since estimating $\la A + B, A' \ra$ is similar. We recall that $\la A , A'+B' \ra$ is given by
\begin{align*}
\left \la
Q_M \left ( \int_0^{\delta/N(0)} e^{it \sqrt{-\Delta}} |u(t)|^{p-1}u(t) dt +
\int_{\delta/N(0)}^{T_2} e^{it \sqrt{-\Delta}}\left ( 1 - \varphi \left ( \frac{x}{t} \right) \right )
 |u(t)|^{p-1}u(t) dt \right ) , \right.  \\
 \left. Q_M \left ( \int^0_{T_1} e^{i\tau \sqrt{-\Delta}} |u(\tau)|^{p-1} u(\tau) d\tau \right )
\right \ra_{L^2}
\end{align*}
By the Duhamel formula, we write
\begin{align*}
 Q_M \left ( \int^0_{T_1} e^{i\tau \sqrt{-\Delta}} |u(\tau)|^{p-1} u(\tau) d\tau \right )
= \mu i \sqrt{-\Delta} Q_M v(0) - \mu i\sqrt{-\Delta} e^{i T_1 \sqrt{-\Delta}} Q_M v(T_1).
\end{align*}
By \eqref{prop22} we have
\begin{align*}
\left \la
Q_M \left ( \int_0^{\delta/N(0)} e^{it \sqrt{-\Delta}} |u(t)|^{p-1}u(t) dt +
\int_{\delta/N(0)}^{T_2} e^{it \sqrt{-\Delta}}\left ( 1 - \varphi \left ( \frac{x}{t} \right) \right )
 |u(t)|^{p-1}u(t) dt \right ) , \right.  \\
 \left. \sqrt{-\Delta} Q_M v(0)
\right \ra_{L^2} \lesssim N(0)^{1-s_p} \| Q_M v(0) \|_{\dot H^1}.
\end{align*}

To handle the second half of the pairing, we use Lemma \ref{lem1} and let $T_1 \rightarrow T_-$ and $T_2 \rightarrow T_+$ so it is important to
reemphasize that all previous estimates are uniform in $T_1$ and $T_2$. We first note that by
\eqref{prop25} and the fact that $\hat \psi \in \mathcal{S}(\R^d)$, we have (for fixed $M$)
\begin{align*}
(-\Delta)^{\frac{1-s_p}{2}} Q_M \left ( \int_0^{\delta/N(0)} e^{it \sqrt{-\Delta}} |u(t)|^{p-1}u(t) dt +
\int_{\delta/N(0)}^{T_2} e^{it \sqrt{-\Delta}}\left ( 1 - \varphi \left ( \frac{x}{t} \right) \right )
 |u(t)|^{p-1}u(t) dt \right )
\end{align*}
converges to
\begin{align*}
(-\Delta)^{\frac{1-s_p}{2}} Q_M \left ( \int_0^{\delta/N(0)} e^{it \sqrt{-\Delta}} |u(t)|^{p-1}u(t) dt +
\int_{\delta/N(0)}^{T_+} e^{it \sqrt{-\Delta}}\left ( 1 - \varphi \left ( \frac{x}{t} \right) \right )
 |u(t)|^{p-1}u(t) dt \right )
\end{align*}
in $L^2$ as $T_2 \rightarrow T_+$.  By Lemma \ref{lem1}
\begin{align*}
(-\Delta )^{\frac{s_p}{2}} e^{i T_1 \sqrt{-\Delta}} Q_M v(T_1) \rightharpoonup 0 \quad \mbox{in } L^2 \mbox{ as }
T_1 \rightarrow T_-.
\end{align*}
Thus, for fixed $M$
\begin{align*}
\lim_{T_1 \rightarrow T_-} \lim_{T_2 \rightarrow T_+} &
\left \la
\sqrt{-\Delta} e^{i T_1 \sqrt{-\Delta}} Q_M v(T_1), \right. \\ & \left.
 Q_M \left ( \int_0^{\delta/N(0)} e^{it \sqrt{-\Delta}} |u(t)|^{p-1}u(t) dt +
\int_{\delta/N(0)}^{T_2} e^{it \sqrt{-\Delta}}\left ( 1 - \varphi \left ( \frac{x}{t} \right) \right )
 |u(t)|^{p-1}u(t) dt \right )
\right \ra \\
&= 0.
\end{align*}
Hence we have proved that
\begin{align*}
\left |\lim_{T_1 \rightarrow T_-} \lim_{T_2 \rightarrow T_+}
\la A , A' + B' \ra \right | \lesssim N(0)^{1-s_p} \| Q_M v(0) \|_{\dot H^1}.
\end{align*}
Similarly it follows that
\begin{align*}
\left | \lim_{T_1 \rightarrow T_-} \lim_{T_2 \rightarrow T_+}
\la A + B, A' \ra \right | \lesssim N(0)^{1-s_p} \| Q_M v(0) \|_{\dot H^1}.
\end{align*}
In conclusion, we have proved that for all $M$ sufficiently large
\begin{align*}
\left |
\lim_{T_1 \rightarrow T_-} \lim_{T_2 \rightarrow T_+}
\int_0^{T_2} \int^0_{T_1} \left \la e^{it \sqrt{-\Delta}} Q_M |u(t)|^{p-1}u(t) dt,
 e^{i\tau \sqrt{-\Delta}} Q_M |u(\tau)|^{p-1}u(\tau) \right \ra d\tau dt
\right | \\ \lesssim N(0)^{2(1-s_p)} + N(0)^{1-s_p}\| Q_M v(0) \|_{\dot H^1}.
\end{align*}

What remains to estimate in $\la Q_M v(0) , Q_M v(0) \ra_{\dot H^1}$ are the terms that contain at most one
Duhamel integral.  We first consider the term
\begin{align*}
\left \la \sqrt{-\Delta} \la e^{iT_2 \sqrt{-\Delta}} Q_M v(T_2) , \sqrt{-\Delta} e^{iT_1 \sqrt{-\Delta}} Q_M v(T_1) \right \ra_{L^2}.
\end{align*}
For fixed $M$ and $T_1 > T_-$, we have that $(-\Delta)^{\frac{2-s_p}{2}} e^{iT_1 \sqrt{-\Delta}} Q_M v(T_1) \in L^2$.  By
Lemma \ref{lem1} we conclude that
\begin{align*}
\lim_{T_2 \rightarrow T_+} \left \la
\sqrt{-\Delta}  e^{iT_2 \sqrt{-\Delta}} Q_M v(T_2) , \sqrt{-\Delta}  e^{iT_1 \sqrt{-\Delta}} Q_M v(T_1)
\right \ra_{L^2}
= 0.
\end{align*}

To estimate the term
\begin{align*}
\left \la
\sqrt{-\Delta} e^{iT_2 \sqrt{-\Delta}} Q_M v(T_2), Q_M \left ( \int^0_{T_1} e^{i \tau \sqrt{-\Delta}} |u(\tau)|^{p-1} u(\tau) d\tau
\right )
\right \ra_{L^2},
\end{align*}
we first note that by \eqref{lembd} $\int^0_{T_1} e^{i \tau \sqrt{-\Delta}} |u(\tau)|^{p-1} u(\tau) d\tau  \in L^2$.  Thus
\begin{align*}
(-\Delta)^{\frac{1-s_p}{2}} Q_M \left ( \int^0_{T_1} e^{i \tau \sqrt{-\Delta}} |u(\tau)|^{p-1} u(\tau) d\tau \right )
\in L^2,
\end{align*}
so by Lemma \eqref{lem1}
\begin{align*}
\lim_{T_2 \rightarrow T_+} \left \la
\sqrt{-\Delta} e^{iT_2 \sqrt{-\Delta}} Q_M v(T_2), Q_M \left ( \int^0_{T_1} e^{i \tau \sqrt{-\Delta}} |u(\tau)|^{p-1} u(\tau) d\tau
\right )
\right \ra_{L^2} = 0.
\end{align*}

We now turn to our final term and claim that
\begin{align*}
\lim_{T_1 \rightarrow T_-} \lim_{T_2 \rightarrow T_+}
\left \la
\sqrt{-\Delta} e^{iT_1 \sqrt{-\Delta}} Q_M v(T_1), Q_M \left ( \int_0^{T_2} e^{i t \sqrt{-\Delta}} |u(t)|^{p-1} u(t) dt
\right )
\right \ra_{L^2} = 0.
\end{align*}
By the Duhamel formula
\begin{align*}
Q_M \int_0^{T_2} e^{i t \sqrt{-\Delta}} |u(t)|^{p-1} u(t) dt = -i\mu \sqrt{-\Delta} Q_M \left (v(0) - e^{iT_2 \sqrt{-\Delta}} v(T_2) \right).
\end{align*}
Again by Lemma \ref{lem1} and our previous computations we have 
\begin{align*}
 \lim_{T_1 \rightarrow T_-} \lim_{T_2 \rightarrow T_+}
\left \la
\sqrt{-\Delta} e^{iT_1 \sqrt{-\Delta}} Q_M v(T_1), Q_M \left ( \int_0^{T_2} e^{i t \sqrt{-\Delta}} |u(t)|^{p-1} u(t) dt
\right )
\right \ra_{L^2} \\ 
= \lim_{T_1 \rightarrow T_-} \left \la
\sqrt{-\Delta} e^{iT_1 \sqrt{-\Delta}} Q_M v(T_1), \sqrt{-\Delta} Q_M  v(0) 
\right \ra_{L^2} = 0.
\end{align*}

Thus, we have proved that for all $M \geq M_0$ 
\begin{align*}
\| Q_M v(0) \|_{\dot H^1}^2 \leq C_0 \| Q_M v(0) \|_{\dot H^1} N(0)^{1-s_p} + C_0 N(0)^{2(1-s_p)} + C_1 M^{-1},
\end{align*}
where the constant $C_0$ is independent of $M$ and $N(0)$ and the constant $C_1$ is independent of $M$. This shows that 
\begin{align*}
\limsup_{M \rar \infty} \| Q_M v(0) \|_{\dot H^1} \lesssim N(0)^{1-s_p}, 
\end{align*}
where the implied constant is independent of $N(0)$.  Thus, we see that $v(0) \in \dot H^1$ and $\| v(0) \|_{\dot H^1} \lesssim N(0)^{1-s_p}$ as desired. 
\end{proof}

\subsection{Proof of Proposition \ref{prop1}}  

We first remark that we will prove something stronger than 
Proposition \ref{prop1}.  In fact, we will prove the following.

\begin{ppn}\label{prop5}
Let $u$ be a solution to \eqref{nlw} on $(T_-,\infty)$ with the compactness 
property. Then for all $s \in [s_p,1)$ and $t \in (T_-,\infty)$, $\vec u(t) \in \dot H^s \times
\dot H^{s-1}$
and 
\begin{align}
\| \vec u(t)\|_{\dot H^s \times \dot H^{s-1}} \lesssim N(t)^{s - s_p}.
\end{align}
\end{ppn}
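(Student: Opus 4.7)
The plan is to establish Proposition \ref{prop5} by a frequency-envelope bootstrap argument, using the compactness of $K$ to supply the smallness needed to gain regularity in small increments. This parallels the corresponding argument in Section 3 of \cite{dodl1} but must be adapted to a general subcritical exponent $p$ and to dimensions $d = 4, 5$.

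First I would exploit scaling invariance and time translation to fix $t_0 = 0$ and $N(0) = 1$, so that the goal reduces to showing $\| \vec u(0) \|_{\dot H^s \times \dot H^{s-1}} \lesssim 1$ for every $s \in [s_p, 1)$, with implicit constant depending only on $s$, $d$, $p$, and $\sup_t \| \vec u(t) \|_{\energysp}$. Then I would introduce a slowly-varying frequency envelope
\[
\alpha_k := \sum_{j \in \Z} 2^{-\sigma |k - j|} \| P_j \vec u(0) \|_{\dot H^{s_p} \times \dot H^{s_p - 1}},
\]
for a small $\sigma > 0$, so that $\vec u(0)$ lies underneath $\alpha$ at regularity $s_p$ with $\|\alpha\|_{\ell^2} \lesssim 1$. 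The compactness property guarantees $\alpha_k \to 0$ as $|k| \to \infty$, and quantitatively $\alpha_k < \eta$ for $|k| > K(\eta)$. The proposition is then equivalent to the assertion $\{ 2^{(s-s_p) k} \alpha_k \}_k \in \ell^2$ for each $s \in (s_p, 1)$.

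The heart of the argument is a localized Strichartz bound on the short interval $J = [-\delta, \delta]$ produced by Lemma \ref{lem3}. Applying Strichartz estimates to the Duhamel form of \eqref{nlw}, projecting to dyadic frequency $2^k$, and invoking the fractional chain rule from Section 2 gives a bound of the schematic form
\[
\sup_{t \in J} \| P_k \vec u(t) \|_{\dot H^s \times \dot H^{s-1}} + \| |D_x|^{s - 1/2} P_k u \|_{W(J)} \lesssim \| P_k \vec u(0) \|_{\dot H^s \times \dot H^{s-1}} + \| |D_x|^{s - 1/2} P_k (|u|^{p-1} u) \|_{N(J)}.
\]
The nonlinear term is then estimated using the fractional Leibniz rule, Bernstein's inequality, the refined radial Sobolev embedding, and Lemma \ref{lem3} itself (which provides the $L^p_t L^{2p}_x$ norm of $u$ on $J$). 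Writing $\alpha_k^{(s)} := 2^{(s - s_p) k} \alpha_k$, this produces a diagonally-dominant inequality of the shape
\[
\alpha_k^{(s)} \leq C_0\, 2^{(s-s_p) k} \alpha_k + C_0\, \eta \sum_{j} \beta_{|k - j|}\, \alpha_j^{(s)},
\]
where $\beta$ is a summable off-diagonal kernel produced by the Littlewood-Paley calculus and $\eta$ is the high-frequency smallness extracted from the compactness of $K$. Choosing $\eta$ sufficiently small, Young's inequality on $\ell^2$ lets me absorb the error term on the left, yielding $\{ \alpha_k^{(s)} \}_k \in \ell^2$ and therefore $\vec u(0) \in \dot H^s \times \dot H^{s-1}$ with the desired bound.

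The main obstacle is the bookkeeping of Strichartz admissibility and scaling conditions across the full range $s \in [s_p, 1)$ in both $d = 4$ and $d = 5$; as in Lemma \ref{lem3}, this necessitates splitting the two dimensional cases. A subtler point is to guarantee that the fractional chain rule applies to $F(u) = |u|^{p-1} u$, which requires $F \in C^2$ and hence $p \geq 2$; this holds throughout the subcritical regime $s_p \geq 1/2$. Finally, each invocation of the bootstrap only gains a fixed increment of regularity controlled by the admissibility conditions, so one must iterate finitely often (taking the newly-obtained regularity as the base at each step) to reach any $s$ strictly less than $1$. Crucially, this iteration breaks exactly at $s = 1$, which is why Proposition \ref{prop2} is obtained separately by the double Duhamel trick of the preceding subsection rather than by pushing the present argument further.
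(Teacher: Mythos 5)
Your proposal captures the frequency-envelope bookkeeping in the proof but misses the mechanism that actually produces the regularity gain, and as stated it is circular. The localized Strichartz estimate you write,
\[
\sup_{t \in J}\| P_k \vec u(t)\|_{\dot H^s\times\dot H^{s-1}} + \| |D_x|^{s-1/2} P_k u\|_{W(J)} \lesssim \| P_k\vec u(0)\|_{\dot H^s\times\dot H^{s-1}} + \| |D_x|^{s-1/2} P_k(|u|^{p-1}u)\|_{N(J)},
\]
combined with the Leibniz/Bernstein estimates, only shows that over the short interval $J$ the solution stays \emph{underneath} a frequency envelope comparable to that of the initial data — this is exactly Claim \ref{clm1} in the paper, which concludes $\alpha_k \lesssim \alpha_k(0)$ and no more. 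It does not tell you that $\alpha_k(0)$ itself decays geometrically at high frequency. In your proposed inequality $\alpha_k^{(s)} \leq C_0 2^{(s-s_p)k}\alpha_k + C_0\eta\sum_j\beta_{|k-j|}\alpha_j^{(s)}$, the first term on the right is (by your own definition $\alpha_k^{(s)} = 2^{(s-s_p)k}\alpha_k$) precisely $C_0\alpha_k^{(s)}$, so after absorbing the small error nothing remains but $\alpha_k^{(s)} \lesssim \alpha_k^{(s)}$. Compactness gives qualitative tail-smallness of $\{\alpha_k(0)\}$, not the geometric weight needed for $\{2^{(s-s_p)k}\alpha_k\}\in\ell^2$.

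Your last sentence is in fact where the misunderstanding crystallizes: you assert that the double Duhamel trick is used only for Proposition \ref{prop2}, but it is also the engine behind Proposition \ref{prop5} (via Lemmas \ref{lem6} and \ref{lem7}). In the paper, the envelope claim is paired at each dyadic block with a frequency-localized double Duhamel expansion of $\langle P_k v(0), P_k v(0)\rangle_{\dot H^{s}}$: one writes $P_k v(0)$ once as a Duhamel integral forward in time and once backward, discards the linear parts by Lemma \ref{lem1}, and splits each nonlinear integral into a near-$t_0$ piece, a far-in-time piece supported outside the light cone, and a far-in-time piece inside the light cone. The near piece is controlled by the envelope (giving the absorbable $\eta^{p-1}\sum_j 2^{-\beta|j-k|}a_j$); the outside-the-cone piece decays in time via radial Sobolev embedding, yielding an $\ell^2$ sequence $b_k$ with the required $2^{-\epsilon k}$ weight; and the inside-the-cone piece $\langle B,B'\rangle$ is estimated by a stationary-phase kernel bound $|K_k(x-y)|\lesssim_L 2^{(d-2+2s)k}/\langle 2^k(|t|+|\tau|)\rangle^L$, which produces geometric decay in $k$ beyond $N(0)$. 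It is these far-time terms — not the local Strichartz bound — that break the circularity and furnish the quantitative gain. Without them, the argument you outline cannot establish $\vec u(0)\in\dot H^s\times\dot H^{s-1}$ from the a priori $\dot H^{s_p}\times\dot H^{s_p-1}$ regularity.
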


To start the proof of Proposition \ref{prop1}, we establish the following 
lemma. 

\begin{lem}\label{lem2}
Let $\eta > 0$.  There exists $\delta > 0$ such that for all $t_0 \in I$ 
\begin{align}
\| u \|_{S([t_0 - \delta / N(t_0), t_0 + \delta/N(t_0)]} < \eta.
\end{align}
\end{lem}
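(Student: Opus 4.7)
The plan is to reduce to a uniform statement about solutions with initial data in the compact set $K$, and then invoke the small data theory.

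First, fix $t_0 \in I$ and consider the rescaled solution
\begin{align*}
v(s,y) = \frac{1}{N(t_0)^{2/(p-1)}} u\left(t_0 + \frac{s}{N(t_0)}, \frac{y}{N(t_0)}\right).
\end{align*}
The scaling is precisely the one preserving \eqref{nlw}, so $v$ is also a solution to \eqref{nlw} on a neighborhood of $s = 0$, and $(v(0), \partial_s v(0))$ is, by construction, the element of $K$ associated to $t_0$. Since the $S$-norm is scale invariant,
\begin{align*}
\| u \|_{S([t_0 - \delta/N(t_0),\, t_0 + \delta/N(t_0)])} = \| v \|_{S([-\delta, \delta])}.
\end{align*}
Thus Lemma \ref{lem2} follows once I show: for every $\eta > 0$ there exists $\delta > 0$ such that every solution $w$ of \eqref{nlw} with $\vec w(0) \in K$ satisfies $\| w \|_{S([-\delta, \delta])} < \eta$.

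Next, I would use the compactness of $K$ to upgrade pointwise Strichartz smallness to uniform smallness. Let $A = \sup \{ \|(f,g)\|_{\energysp} : (f,g) \in K \}$ and let $\delta_0 = \delta_0(A)$ be the small-data threshold from the local well--posedness proposition. Set $\epsilon = \min(\delta_0, \eta/2)$. For each $(f,g) \in \energysp$ the Strichartz estimates give $\| S(t)(f,g) \|_{S(\R)} \lesssim \|(f,g)\|_{\energysp}$, so by absolute continuity of the Lebesgue integral $\| S(t)(f,g) \|_{S([-\delta, \delta])} \to 0$ as $\delta \to 0$. Moreover, linearity of $S(t)$ together with the same Strichartz bound shows that $(f,g) \mapsto \| S(t)(f,g) \|_{S([-\delta, \delta])}$ is Lipschitz on $\energysp$ with constant independent of $\delta$. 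A standard finite cover argument, using the pre-compactness of $K$ in $\energysp$, then yields a single $\delta > 0$ such that $\| S(t)(f,g) \|_{S([-\delta, \delta])} < \epsilon$ for every $(f,g) \in K$.

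Finally, the small-data portion of the local well--posedness proposition, applied on $[-\delta, \delta]$ with data bound $A$, produces the solution $w$ on $[-\delta, \delta]$ together with $\| w \|_{S([-\delta, \delta])} \leq 2 \epsilon \leq \eta$, which gives the lemma after undoing the rescaling. The one substantive point is the uniform Strichartz smallness in the second step: this is exactly where the compactness of $K$ is essential, while the rescaling and the small-data theory are applied in a routine way.
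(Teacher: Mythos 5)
Your proof is correct, but it takes a genuinely different route from the paper's. The paper argues directly with $u$ (after translating $t_0$ to $0$): it decomposes the free evolution $S(t)\vec u(0)$ into a high-frequency piece $P_{\geq C(\eta)N(0)}$, controlled by the Arzel\`a--Ascoli equi-tightness of $K$, and a low-frequency piece $P_{\leq C(\eta)N(0)}$, controlled by Bernstein and Sobolev embedding which yield an explicit power of $\delta$; it then treats the Duhamel term via Strichartz and the fractional chain rule (this forces working in the combined $X(J) = S(J)\cap |D_x|^{1/2-s_p}W(J)$ norm), and closes with a continuity/bootstrap argument. You instead rescale to put the data in $K$, observe that the $S$-norm is scale-invariant, extract uniform smallness of $\| S(t)(f,g)\|_{S([-\delta,\delta])}$ over $(f,g) \in \overline K$ by combining absolute continuity of the Lebesgue integral (pointwise smallness), a $\delta$-independent Lipschitz estimate in the data from Strichartz, and a finite subcover of the compact closure of $K$, then invoke the small-data clause of the local well-posedness proposition as a black box. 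Your argument is more modular: it cleanly separates the compactness input (uniform smallness of the free flow's Strichartz norm over $K$) from the nonlinear contraction step, which you outsource entirely to the LWP proposition; the paper's argument is more quantitative, producing explicit $\delta$-powers that it reuses elsewhere. One small caveat: the LWP proposition as literally stated gives the bound $\|u\|_{S(I)} \leq 2\delta_0(A)$ where $\delta_0(A)$ is the threshold, not $\leq 2\epsilon$ for a smaller $\epsilon < \delta_0(A)$. Your use of $\epsilon = \min(\delta_0, \eta/2)$ therefore implicitly appeals to the sharper (and standard) output of the contraction argument, namely $\|u\|_{S(I)} \leq 2\|S(t)\vec u(0)\|_{S(I)}$ once the latter is below threshold; you should either cite that form or note that the threshold can be shrunk to $\min(\delta_0,\eta/2)$ without loss.
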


\begin{proof}
Assume without loss of generality that $t_0 = 0$.  Define $J = [-\delta/ N(0), \delta/N(0)]$,
and 
\begin{align*}
\| u \|_{X(J)} := \| u \|_{S([t_0 - \delta / N(t_0), t_0 + \delta/N(t_0)]}
+ \| |D_x|^{s_p - 1/2} u \|_{W([t_0 - \delta / N(t_0), t_0 + \delta / N(t_0)])}.
\end{align*}
Note that if $s_p = 1/2$, $X(J) = S(J)$.  By the Duhamel formula 
\begin{align}
\| u \|_{X(J)} \leq \left \| S(t)(u_0,u_1) \right \|_{X(J)} + \left \| 
\int_0^t \frac{\sin((t - \tau)\sqrt{-\Delta})}{\sqrt{-\Delta}} |u(\tau)|^{p-1} u(\tau) d\tau 
\right \|_{X(J)}.
\end{align}
We first estimate $\left \| S(t)(u_0,u_1) \right \|_{S(J)}$.  By compactness of the trajectory, there exists $C(\eta) > 0$ such 
that 
$$
\| P_{\geq C(\eta)N(t)} \vec u(t) \|_{\energysp} < \eta.
$$
The constant $C(\eta)$ is independent of $t$ which justifies us taking $t_0 = 0$ initially. By Strichartz estimates
\begin{align}
\| S(t) P_{\geq C(\eta)N(0)} \vec u(0) \|_{X(J)} \lesssim \| P_{\geq C(\eta)N(t)} \vec u(t) \|_{\energysp} \lesssim \eta 
. \label{ppn11}
\end{align}
We now estimate $\| S(t) P_{\leq C(\eta)N(0)} \vec u(0) \|_{X(J)}$.  By Sobolev embedding and Bernstein's inequalities, 
\begin{align*}
\| P_{\leq C(\eta)N(0)} S(t) \vec u(0) \|_{L_x^{(p-1)(d+1)/2}} &\lesssim
\| P_{\leq C(\eta) N(0)} S(t) \vec u(0) \|_{\dot H^{\frac{d}{2} - \frac{2d}{(p-1)(d+1)}}} \\
&\lesssim 
(C(\eta) N(0))^{\frac{d}{2} - \frac{2d}{(p-1)(d+1)} - s_p} \| S(t) u(0) \|_{\energysp} \\
&\lesssim C(\eta)^{\frac{2}{(p-1)(d+1)}} N(0)^{\frac{2}{(p-1)(d+1)}}.
\end{align*}
Taking the $L^{(p-1)(d+1)/2}_t(J)$ norm of both sides of the previous estimate yields 
\begin{align}
 \| P_{\leq C(\eta)N(0)} S(t) \vec u(0) \|_{S(J)} \lesssim C(\eta)^{\frac{2}{(p-1)(d+1)}} \delta^{\frac{2}{(p-1)(d+1)}}. 
\label{ppn12}
\end{align}
Similarly, by Sobolev embedding and Bernstein's inequalities,
\begin{align*}
\| |D_x|^{s_p - 1/2} P_{\leq C(\eta)N(0)} S(t) \vec u(0) \|_{L_x^{2(d+1)/(d-1)}} &\lesssim
\| P_{\leq C(\eta) N(0)} S(t) \vec u(0) \|_{\dot H^{\frac{d-1}{2(d+1)} + s_p}} \\
&\lesssim 
(C(\eta) N(0))^{\frac{d-1}{2(d+1)}} \| S(t) u(0) \|_{\energysp} \\
&\lesssim C(\eta)^{\frac{d-1}{2(d+1)}} N(0)^{\frac{d-1}{2(d+1)}}.
\end{align*}
Taking the $L^{2(d+1)/(d-1)}_t(J)$ norm of both sides of the previous estimate yields 
\begin{align}
 \| |D_x|^{s_p - 1/2} P_{\leq C(\eta)N(0)} S(t) \vec u(0) \|_{W(J)} \lesssim 
C(\eta)^{\frac{d-1}{2(d+1)}} \delta^{\frac{d-1}{2(d+1)}}. 
\label{ppn13}
\end{align}

Combining \eqref{ppn11}, \eqref{ppn12}, and \eqref{ppn13} we obtain 
\begin{align}
\| S(t) \vec u(0) \|_{X(J)} \lesssim \eta + C(\eta)^{\frac{2}{(p-1)(d+1)}} \delta^{\frac{2}{(p-1)(d+1)}}
+ C(\eta)^{\frac{d-1}{2(d+1)}} \delta^{\frac{d-1}{2(d+1)}}.
\end{align}

To estimate the Duhamel integral $\int_0^t \frac{\sin((t - \tau)\sqrt{-\Delta})}{\sqrt{-\Delta}} |u(\tau)|^{p-1} u(\tau) d\tau$,
we use Strichartz estimates and the chain rule for fractional derivatives
\begin{align*}
\left \| 
\int_0^t \frac{\sin((t - \tau)\sqrt{-\Delta})}{\sqrt{-\Delta}} |u(\tau)|^{p-1} u(\tau) d\tau
\right \|_{X(J)}
& \lesssim 
\| |D_x|^{s_p - 1/2} |u|^{p-1} u \|_{N(J)} \\
&\lesssim \| u \|_{S(J)}^{p-1} \| |D_x|^{s_p - 1/2} u \|_{W(J)} \\
&\lesssim \| u \|_{X(J)}^p.
\end{align*}
Combining this with the estimate for $\| S(t) \vec u(0) \|_{X(J)}$, we obtain
\begin{align*}
\| u \|_{X(J)} \lesssim \eta + C(\eta)^{\frac{2}{(p-1)(d+1)}} \delta^{\frac{2}{(p-1)(d+1)}}
+ C(\eta)^{\frac{d-1}{2(d+1)}} \delta^{\frac{d-1}{2(d+1)}}
 + \| u \|_{X(J)}^p.
\end{align*}
The proof is concluded by a standard continuity argument after taking $\delta$ sufficiently small.
\end{proof}

We now prove Proposition \ref{prop1}.  We first show that we gain 
an initial amount of regularity $\epsilon_p > 0$ where $\epsilon_p$ depends only on 
$s_p$.

\begin{lem}\label{lem6}
Let $u$ be as in Proposition \ref{prop5}.  Define 
\begin{align}\label{ep def}
\epsilon_p := 
\begin{cases}
\frac{1}{4} \min \left \{ 1-s_p, s_p - \frac{1}{2} \right \} &\mbox{ if } s_p > \frac{1}{2}, \\
\frac{1}{16} &\mbox{ if } s_p = \frac{1}{2}.
\end{cases}
\end{align}
Then for all $t \in (T_-,\infty)$, $\vec u(t) \in \dot H^{s_p + \epsilon_p}
\times \dot H^{s_p + \epsilon_p - 1}$ and 
\begin{align*}
\| \vec u(t)\|_{\dot H^{s_p + \epsilon_p}
	\times \dot H^{s_p + \epsilon_p - 1}} \lesssim N(t)^{\epsilon_p}.
\end{align*}
\end{lem}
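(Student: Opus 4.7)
The plan is a frequency-envelope bootstrap run over the short time windows of smallness provided by Lemma \ref{lem2}. Fix $t_0 \in I_{\max}(u)$; translating, assume $t_0 = 0$ and set $N_0 = N(0)$. For a small $\eta > 0$ to be chosen in terms of the uniform bound $A := \sup_{t} \| \vec u(t) \|_{\energysp}$, Lemma \ref{lem2} gives $\delta = \delta(\eta) > 0$ with $\| u \|_{S(J)} \leq \eta$ on $J = [-\delta/N_0, \delta/N_0]$; re-examining its proof also yields $\| |D_x|^{s_p - 1/2} u \|_{W(J)} \lesssim \eta$. Define the frequency envelope $\alpha_k := \| P_k \vec u(0) \|_{\energysp}$. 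By the compactness property (via the Arzela--Ascoli characterization given after the definition of compactness) together with Bernstein, the contributions from $k \leq \log_2 N_0 + C$ are trivially dominated by $N_0^{2\epsilon_p} A^2$ once multiplied by $2^{2\epsilon_p k}$, while the high-frequency tail $k \geq \log_2 N_0 + K(\mu)$ is $\ell^2$-controlled by $\mu^2$. Hence it suffices to prove the geometric decay $\alpha_k \lesssim N_0^{\epsilon_p} 2^{-\epsilon_p k}$ at large $k$.

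For each dyadic $k$, I would apply $P_k$ to the Duhamel formulation of \eqref{nlw} on $J$ and invoke the Strichartz estimate from the preliminaries to obtain
\begin{align*}
\| P_k u \|_{S(J)} + \| |D_x|^{s_p - 1/2} P_k u \|_{W(J)} \lesssim \alpha_k + \| |D_x|^{s_p - 1/2} P_k(|u|^{p-1} u) \|_{N(J)}.
\end{align*}
Next I would decompose $P_k(|u|^{p-1} u)$ via a paraproduct into high-high and low-high contributions and apply the fractional Leibniz rule from the preliminaries, so that each piece is controlled by $\eta^{p-1}$ times a weighted sum $\sum_{k'} 2^{-\epsilon_p |k - k'|} \alpha_{k'}$; the geometric weight is exactly the Bernstein cost of moving $\epsilon_p$ derivatives across frequency scales. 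The specific value of $\epsilon_p$ in \eqref{ep def} is the maximal gain per bootstrap step compatible with wave--admissibility and the non-integer power $p$; for $s_p = 1/2$ the coincidence $S(J) = W(J)$ forces the more restrictive choice $\epsilon_p = 1/16$.

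Absorbing the $\eta^{p-1}$ prefactor by taking $\eta$ sufficiently small produces a recursive inequality for $\alpha_k$; passing to the smallest slope-$\epsilon_p$ frequency envelope dominating $\alpha_k$ (an essentially algebraic manipulation) yields $\alpha_k \lesssim N_0^{\epsilon_p} 2^{-\epsilon_p k}$ at high frequencies. Summing $\sum_k 2^{2 \epsilon_p k} \alpha_k^2 \lesssim N_0^{2 \epsilon_p}$ then delivers the claimed bound on $\| \vec u(0) \|_{\dot H^{s_p + \epsilon_p} \times \dot H^{s_p + \epsilon_p - 1}}$. The main obstacle will be the paraproduct step for the non-integer power $|u|^{p-1} u$: the fractional Leibniz rule must be threaded through the tight admissibility constraints of the Strichartz pairs available in dimensions $4$ and $5$, which is exactly the trade-off encoded in the definition \eqref{ep def} of $\epsilon_p$.
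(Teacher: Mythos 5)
Your proposal correctly identifies the preliminary step of the paper's proof — the frequency-localized Strichartz bootstrap on the short interval $J = [-\delta/N(0), \delta/N(0)]$ given by Lemma~\ref{lem2}, yielding the envelope recursion $a_k \lesssim a_k(0) + \eta^{p-1}\sum_j 2^{-\beta|j-k|}a_j$ — but the rest of the plan has a genuine gap and would not close. The recursion from Strichartz plus the paraproduct/fractional Leibniz estimate only produces $\alpha_k \lesssim \alpha_k(0)$ (Claim~\ref{clm1} in the paper): it \emph{propagates} the frequency envelope of $\vec u(0)$ over $J$ but cannot create geometric decay that $\alpha_k(0)$ does not already possess. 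Since $\vec u(0)$ is only assumed to lie in $\energysp$, its envelope $\alpha_k(0)$ is merely $\ell^2$, with no a priori slope. Your step ``passing to the smallest slope-$\epsilon_p$ frequency envelope dominating $\alpha_k$ yields $\alpha_k \lesssim N_0^{\epsilon_p}2^{-\epsilon_p k}$'' does not follow from anything preceding it; envelope domination preserves $\ell^2$ bounds but does not generate decay. Likewise the compactness property gives only qualitative smallness of high frequencies ($\|P_{\geq C(\eta)N(0)}\vec u(0)\|_{\energysp} < \eta$ for some unquantified $C(\eta)$), not a geometric rate.

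The mechanism that actually produces the regularity gain in the paper is the \emph{double Duhamel} argument, which your proposal does not contain. The paper writes the self-pairing $\langle P_k v(0), P_k v(0)\rangle_{\dot H^{s_p}}$ with $v(0)$ expanded by Duhamel both forward to $T_2 \to T_+$ and backward to $T_1 \to T_-$; Lemma~\ref{lem1} (weak vanishing of the linear part as $t \to T_\pm$ for solutions with the compactness property) kills the free-evolution cross-terms in the limit. The surviving bilinear term pairs the forward Duhamel integral against the backward one, which is then split into the near-time/far-field pieces $A, B$ and $A', B'$. The $A$-pieces are handled by the short-time smallness plus radial Sobolev embedding (yielding the $b_k$-sequence with the desired $N(0)^{\epsilon_p}$ weight), while the far-field pieces $\langle B, B'\rangle$ are controlled via non-stationary-phase decay of the kernel of $P_k^2 e^{i(\tau - t)\sqrt{-\Delta}}(-\Delta)^{-(1-s_p)}$ together with the spatial cutoffs $\varphi(x/|t|)$, $\varphi(y/|\tau|)$ — the crucial point being that the spatial supports force $|x-y| \leq \tfrac12(|t|+|\tau|)$, placing the kernel in the decaying regime. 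It is this bilinear interaction across the full time axis, made decaying by compactness, that produces $2^{-\epsilon_p k}$ decay beyond the original regularity of $\vec u(0)$. A purely local-in-time bootstrap, no matter how the paraproduct is threaded through the admissibility constraints, cannot do this: the local well-posedness theory conserves regularity but does not improve it, and the specific value of $\epsilon_p$ in \eqref{ep def} (in particular the constraint $\epsilon_p \leq \beta/4$ needed for the Minkowski step and $\epsilon_p \leq \tfrac14(1-s_p)$ for $\ell^2$-summability of $b_k$) is dictated by the double Duhamel tail estimates, not by ``the maximal gain per bootstrap step.''
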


\begin{proof}[Proof of Proposition \ref{lem6}]
Without loss of generality, we assume that $t_0 = 0$.  Let $\epsilon_p$
be as above.  We seek a frequency envelope $\{\alpha_k (0)\}_k$ so that 
\begin{align}
\| (P_k u(0), P_k \partial_t u(0)) \|_{\dot H^{s_p} \times \dot H^{s_p-1}} &\lesssim  \alpha_k(0)
\end{align}
with $\| \{ 2^{\epsilon_p k} \alpha_k(0) \} \|_{\ell^2} \lesssim N(0)^{\epsilon_p}$.

\begin{clm}\label{clm1}
There exists $\eta_0 > 0$ with the following property.  Let $0 < \eta < \eta_0$, and let $J = [-\delta/N(0), \delta/N(0)]$ where $\delta = \delta(\eta)$ is
chosen as in Lemma \ref{lem2}.  Let $\beta = s_p - 1/2$ if $s_p > 1/2$, and let $\beta = 1/4$ if $s_p = 1/2$.  Define a sequence of real numbers $\{a_k\}_k$ according to the following:
if $s_p > \frac{1}{2}$, define   
\begin{align}
a_k &:= 2^{s_pk} \| P_k u \|_{L^{\infty}_t(J ; L_x^2)} +  
2^{(s_p-1)k} \| P_k \partial_t u \|_{L^{\infty}_t(J ; L_x^2)} + 2^{\beta k} \| P_k u \|_{W(J)}
\end{align}
and if $s_p = \frac{1}{2}$, define
\begin{align}
a_k &:= 
\begin{cases}
2^{k/2} \| P_k u \|_{L^{\infty}_t(J ; L_x^2)} +  
2^{-k/2} \| P_k \partial_t u \|_{L^{\infty}_t(J ; L_x^2)} + 2^{k/4} \| P_k u \|_{L^{20/3}_tL^{5/2}_x(J \times \R^4)} &\mbox{ if } d = 4,\\
2^{k/2} \| P_k u \|_{L^{\infty}_t(J ; L_x^2)} +  
2^{-k/2} \| P_k \partial_t u \|_{L^{\infty}_t(J ; L_x^2)} + 2^{k/4} \| P_k u \|_{L^{6}_tL^{12/5}_x(J \times \R^5)} &\mbox{ if } d = 5.
\end{cases}
\end{align}
Define 
\begin{align*}
 a_k(0) &:= 2^{s_pk} \| P_k u(0) \|_{L^2_x} +  
2^{(s_p-1)k} \| P_k \partial_t u(0) \|_{L_x^2},
\end{align*}
and frequency envelopes $\{\alpha_k\}$ and $\{\alpha_k(0)\}$ by 
\begin{align}
\alpha_k &:= \sum_j 2^{-\frac{\beta}{2}|j-k|} a_j, \\
\alpha_k(0) &:= \sum_j 2^{-\frac{\beta}{2} |j-k|} a_j(0).
\end{align}
Then for $\eta_0$ sufficiently small we have
\begin{align}
a_k &\lesssim a_k(0) + \eta^2 \sum_j 2^{-\beta |j - k|} a_j, \\
\alpha_k &\lesssim \alpha_k(0). 
\end{align}
\end{clm}

\begin{proof}
We first handle the case $s_p > 1/2$.  The Strichartz estimates localized to frequency $\simeq 2^k$ read
\begin{align*}
a_k = 2^{s_pk} & \| P_k u \|_{L^{\infty}_t(J ; L_x^2)} +  
2^{(s_p-1)k} \| P_k \partial_t u \|_{L^{\infty}_t(J ; L_x^2)} + 2^{\beta k} \| P_k u \|_{W(J)} \\
&\lesssim a_k(0) + 2^{\beta k} \| P_k (|u|^{p-1}u) \|_{N(I)}. 
\end{align*}
Let $F(u) = |u|^{p-1}u$.  Choose $\rho \in (0,1)$ so that $\rho > 2 \beta$.  We observe that  
\begin{align*}
\| P_k F(P_{\leq k - 1} u) \|_{L_x^{\frac{2(d+1)}{d+3}}}
\lesssim& 2^{-\rho k} \| P_k |D_x|^{\rho} F(P_{\leq k -1} u ) \|_{L_x^{\frac{2(d+1)}{d+3}}} \\
\lesssim& 2^{-\rho k} \| P_{\leq k -1} u \|_{L_x^{\frac{(p-1)(d+1)}{2}}}^{p-1} \| |D_x|^{\rho} P_{\leq k -1} u \|_{L_x^{\frac{2(d+1)}{d-1}}} \\
\lesssim& 2^{-\rho k} \| u \|_{L_x^{\frac{(p-1)(d+1)}{2}}}^{p-1} \sum_{j \leq k -1} 
\| |D_x|^{\rho} P_j u \|_{L_x^{\frac{2(d+1)}{d-1}}} \\
\lesssim& \| u \|_{L_x^{\frac{(p-1)(d+1)}{2}}}^{p-1} \sum_{j \leq k -1} 2^{-(k - j)\rho}
\| P_j u \|_{L_x^{\frac{2(d+1)}{d-1}}}.
\end{align*}
By H\"older's inequality in time, we have 
\begin{align*}
2^{\beta k} \| P_k F(P_{\leq k - 1} u) \|_{N(J)} &\lesssim 
\| u \|_{S(J)}^{p-1} \sum_{j \leq k -1} 2^{-(\rho - \beta)(k - j)} 2^{\beta j} \| P_j u \|_{W(J)} \\
&\lesssim \eta^{p-1} \sum_{j \leq k -1} 2^{-\beta(k - j)} a_j.
\end{align*}
By the fundamental theorem of calculus 
\begin{align}
P_k ( F(u) - F(P_{\leq k - 1} u) ) 
= P_k \left ( \int_0^1 F' \left ( t P_{\geq k} u  + P_{\leq k -1} u \right ) dt P_{\geq k} u \right ).
\end{align}
Again by Holder's inequality in space and time and Bernstein's inequalities we obtain 
\begin{align*}
2^{\beta k} \| P_k ( F(u) - F(P_{\leq k - 1} u) ) \|_{N(I)} &\lesssim \| u \|_{S(J)}^{p-1} 2^{\beta k}
\| P_{\geq k} u \|_{W(J)} \\
\lesssim \eta^{p-1} \sum_{j \geq k} 2^{-\beta(j - k)} a_j.
\end{align*}
Combining the previous two inequalities, we obtain
\begin{align}
2^{\beta k} \| P_k F(u) \|_{N(I)} \lesssim \eta^{p-1} \sum_j 2^{-\beta |j - k|} a_j.
\end{align}
Thus, 
\begin{align*}
a_j \lesssim a_j(0) + \eta^{p-1} \sum_j 2^{-\beta |j - k|} a_j.  
\end{align*}

For the case $s_p = 1/2$, we use the following frequency localized Strichartz estimates: 
in $4d$
\begin{align*}
a_k = 2^{k/2}& \| P_k u \|_{L^{\infty}_t(J ; L_x^2)} +  
2^{-k/2} \| P_k \partial_t u \|_{L^{\infty}_t(J ; L_x^2)} + 2^{k/4} \| P_k u \|_{L^{20/3}_tL^{5/2}_x(J \times \R^4)} \\
&\lesssim a_k(0) + 2^{k/4} \| P_k  (|u|^{p-1}u) \|_{L^{20/11}_tL^{5/4}_x(J \times \R^4)}, 
\end{align*}
and in $5d$
\begin{align*}
a_k = 2^{k/2}& \| P_k u \|_{L^{\infty}_t(J ; L_x^2)} +  
2^{-k/2} \| P_k \partial_t u \|_{L^{\infty}_t(J ; L_x^2)} + 2^{k/4} \| P_k u \|_{L^{6}_tL^{12/5}_x(J \times \R^5)} \\
&\lesssim a_k(0) + 2^{k/4} \| P_k  (|u|^{p-1}u) \|_{L^{2}_tL^{4/3}_x(J \times \R^5)}.  
\end{align*}
The argument then proceeds exactly as in the case $s_p > 1/2$, and we obtain
\begin{align*}
a_j \lesssim a_j(0) + \eta^{p-1} \sum_j 2^{-\beta |j - k|} a_j.  
\end{align*}

We now estimate $\alpha_k$.  We have
\begin{align*}
\alpha_k &= \sum_j 2^{-\frac{\beta}{2} |j - k|} a_j \\
&\lesssim \sum_j 2^{-\frac{\beta}{2} |j - k|}  \left ( a_j(0) + \eta^{p-1} \sum_l 2^{-\beta |l - j|} a_l \right ) \\
&\lesssim \alpha_k(0) + \eta^{p-1} \sum_l \left ( \sum_j 2^{-\frac{\beta}{2} |j - k|} 2^{-\beta |l - j|} \right ) a_l. 
\end{align*}
By considering the cases $l \leq k$ and $l > k$, it is simple to verify
\begin{align*}
\sum_l \left ( \sum_j 2^{-\frac{\beta}{2} |j - k|} 2^{-\beta |l - j|} \right ) a_l \lesssim
\sum_l 2^{-\frac{\beta}{2}|l - k|} a_l,
\end{align*}
so that 
\begin{align*}
\alpha_k \lesssim \alpha_k(0) + \eta^{p-1} \alpha_k.
\end{align*}
Choosing $\eta_0$ sufficiently small implies 
\begin{align*}
\alpha_k \lesssim \alpha_k(0)
\end{align*}
as desired.
\end{proof}

Returning to the proof of Proposition \ref{prop5}, we see that the proof of the claim also yields the estimate
(with $\beta$ as in Claim \ref{clm1})
\begin{align}
2^{s_p k} \left \| 
P_k \int_0^{\delta /N(0)} \frac{e^{-it\sqrt{\Delta}}}{\sqrt{-\Delta}} |u(t)|^{p-1} u(t) dt
\right \|_{L^2_x} 
\lesssim \eta^{p-1} \sum_j 2^{-\beta |j - k|} a_j. \label{prop11}
\end{align}
The proof contains many elements from the proof of Proposition \ref{prop2}.   Let $\varphi \in C^\infty_0(\R^d)$ be radial such that $\varphi(x) = 1$ for $|x| \leq 1/8$ and $\varphi = 0$ for $|x| \geq 1/4$. 
We claim that given $\sigma \in (0, 1 - s_p]$, we have the estimate
\begin{align}
\int_{\delta / N(0)}^{T_2} 
\left \| 
\frac{e^{it \sqrt{-\Delta}}}{\sqrt{-\Delta}} \left ( 1 - \varphi \left (\frac{x}{t} \right ) \right ) |u(t)|^{p-1} u(t) dt
\right \|_{\dot H^{s_p + \sigma}} \lesssim N(0)^{\sigma} \delta^{-\sigma}, \label{prop12}
\end{align}
where the implied constant is uniform in $T_2 \in (0,T_+)$.  Indeed, by Sobolev embedding 
\begin{align*}
\left \| \frac{e^{it \sqrt{-\Delta}}}{\sqrt{-\Delta}}
\left ( 1 - \varphi \left (\frac{x}{t} \right ) \right ) |u(t)|^{p-1} u(t) 
\right \|_{\dot H^{s_p + \sigma}} &= \left \| 
\left ( 1 - \varphi \left (\frac{x}{t} \right ) \right ) |u(t)|^{p-1} u(t) 
\right \|_{\dot H^{s_p-1 + \sigma}} \\
&\lesssim \left \| \left ( 1 - \varphi \left (\frac{x}{t} \right ) \right ) |u(t)|^{p-1} u(t) 
\right \|_{L^q_x}  
\end{align*}
where $\frac{1}{q} = \frac{1}{2} + \frac{1-s_p - \sigma}{d}$. By radial Sobolev embedding
\begin{align*}
\left \| \left ( 1 - \varphi \left (\frac{x}{t} \right ) \right ) |u(t)|^{p-1}u(t)
\right \|_{L^q_x} &\lesssim |t|^{-1-\sigma} 
\left \| \left ( 1 - \varphi \left (\frac{x}{t} \right )  \right ) |x|^{1 + \sigma} |u(t)|^{p-1} u(t) 
\right \|_{L^q_x} \\
&\lesssim |t|^{-1 - \sigma} \| |x|^{\frac{1+\sigma}{p}} u(t) \|_{L_x^{qp}}^p \\
&\lesssim |t|^{-1 - \sigma} \| u \|_{\dot H^{s_p}}^p.
\end{align*}
Thus, 
\begin{align*}
\int_{\delta / N(0)}^{T_2} 
\left \| 
\frac{e^{it \sqrt{-\Delta}}}{\sqrt{-\Delta}} \left ( 1 - \varphi \left (\frac{x}{t} \right ) \right ) |u(t)|^{p-1} u(t) dt
\right \|_{\dot H^{s_p + \sigma}} &\lesssim 
\int_{\delta / N(0)}^{T_2} |t|^{-1 - \sigma} dt\\
&\lesssim N(0)^{\sigma} \delta^{-\sigma},
\end{align*}
uniformly in $T_2$.

Define $v$ as in the proof of Proposition \ref{prop2},i.e.
\begin{align*}
v(t) = u(t) + \frac{i}{\sqrt{-\Delta}} \partial_t u(t). 
\end{align*}
Then $\| v(0) \|_{\dot H^{s_p}} = \| \vec u(t) \|_{\energysp}$ and $v$ solves
\begin{align*}
\partial_t = -i \sqrt{-\Delta} v - \mu \frac{i}{\sqrt{-\Delta}} |u|^{p-1} u.
\end{align*}

We now estimate $\| P_k v(0) \|_{\dot H^{s_p}}$.  By the Duhamel formula,
for any $T_- < T_1 < 0 < T_2 < T_+$, 
\begin{align*}
\la P_k v(0) &, P_k v(0) \ra_{\dot H^{s_p}} \\
=& \left \la P_k \left ( e^{iT_2 \sqrt{-\Delta}} v(T_2) + i \mu \int_0^{T_2} \frac{e^{it \sqrt{-\Delta}}}{\sqrt{-\Delta}} |u(t)|^{p-1}u(t) dt
\right ), \right. \\
& \left. P_k \left (  e^{iT_1 \sqrt{-\Delta}} v(T_1) - i \mu \int^0_{T_1} \frac{e^{i\tau \sqrt{-\Delta}}}{
\sqrt{-\Delta}} |u(\tau)|^{p-1}u(\tau) d\tau
\right )  \right \ra_{\dot H^{s_p}}.
\end{align*}
As in the proof of Proposition \ref{prop2}, we can conclude via Lemma \ref{lem1} and \eqref{prop12} that 
\begin{align*}
\la& P_k v(0), P_k v(0) \ra_{\dot H^{s_p}} = \\
&\lim_{T_1 \rightarrow T_-} \lim_{T_2 \rightarrow T_+}
\left \la 
P_k  \left ( \int_0^{T_2} \frac{e^{it \sqrt{-\Delta}}}{\sqrt{-\Delta}} |u(t)|^{p-1}u(t) dt \right ), 
P_k  \left ( \int^0_{T_1} \frac{e^{i\tau \sqrt{-\Delta}}}{\sqrt{-\Delta}} |u(\tau)|^{p-1}u(\tau) d\tau \right )
\right \ra_{\dot H^{s_p}}.
\end{align*}
We split the Duhamel integrals into pieces and write the previous $\dot H^{s_p}$ pairing as a sum of $\dot H^{s_p}$ pairings
\begin{align*}
\la A + B, A' \ra + \la A , A' + B' \ra + \la B, B'\ra - \la A , A'\ra,
\end{align*}
where 
\begin{align}
A &= P_k \left ( \int_0^{\delta/N(0)} \frac{e^{it \sqrt{-\Delta}}}{\sqrt{-\Delta}} |u(t)|^{p-1}u(t) dt +
\int_{\delta/N(0)}^{T_2} \frac{e^{it \sqrt{-\Delta}}}{\sqrt{-\Delta}} \left ( 1 - \varphi \left ( \frac{x}{t} \right) \right )
 |u(t)|^{p-1}u(t) dt \right ), \\
B &= P_k \left (\int_{\delta/N(0)}^{T_2} \frac{e^{it \sqrt{-\Delta}}}{\sqrt{-\Delta}} \varphi \left ( \frac{x}{t} \right ) |u(t)|^{p-1}u(t) dt \right ),
\end{align}
and $A',B'$ are the analogous integrals in the negative time direction.  

We first estimate $A$.  By \eqref{prop11} and \eqref{prop12} we have the estimate (with $\beta$ defined in Claim \ref{clm1})
\begin{align*}
\| A \|_{\dot H^{s_p}} \lesssim 
\eta^{p-1} \sum_j 2^{-\beta |j - k|} a_j + 2^{-\epsilon_p k} b_k
\end{align*}
where $b_k = 2^{-k(\sigma(k) - \epsilon_p)} N(0)^{\sigma(k)}$, and we are free to choose 
$\sigma(k) \in (0,1-s_p]$ for each $k$. We choose  
\begin{align*}
\sigma(k) = 
\begin{cases}
1-s_p, &\mbox{if } 2^k \geq N(0), \\
\epsilon_p/2, &\mbox{if } 2^k < N(0).
\end{cases}
\end{align*}
Then 
\begin{align*}
b_k = 
\begin{cases}
2^{-k(1-s_p - \epsilon_p)} N(0)^{1-s_p}, &\mbox{if } 2^k \geq N(0), \\
2^{\epsilon_p k /2} N(0)^{\epsilon_p/2}, &\mbox{if } 2^k < N(0),
\end{cases}
\end{align*}
and (since $\epsilon_p \leq \frac{1}{4}(1 - s_p)$)
\begin{align}
\| \{ b_k \} \|_{\ell^2} \lesssim N(0)^{\epsilon_p}.
\end{align}
In summary, we have the estimate
\begin{align*}
\| A \|_{\dot H^{s_p}} \lesssim \eta^{p-1} \sum_j 2^{-\beta |j - k|} a_j + 2^{-\epsilon_p k} b_k, \quad 
\| \{b_k\}_k \|_{\ell^2} \lesssim N(0)^{\epsilon_p},
\end{align*}
uniformly in $T_2$.  The same estimate holds with $A$ replaced by $A'$ uniformly in $T_1$. This implies that 
\begin{align}
\la A , A' \ra \lesssim \eta^{2(p-1)} \left ( \sum_j 2^{-\beta |j - k|} a_j \right )^2+ 2^{-2\epsilon_p k} b_k^2 \label{prop14}
\end{align}
uniformly in $T_1$ and $T_2$.   

We now estimate $\la A, A' + B' \ra$.  By Lemma \ref{lem1}, 
\begin{align*}
- \mu i \int_{T_1}^0 \frac{e^{i \tau \sqrt{-\Delta}}}{\sqrt{-\Delta}} |u(\tau)|^{p-1} u(\tau) d\tau \rightharpoonup v(0)
\end{align*}
in $\dot H^{s_p}$ as $T_1 \rightarrow T_-$. As in the proof of Proposition \ref{prop2}, we have that 
\begin{align}
\left |\lim_{T_1 \rightarrow T_-} \lim_{T_2 \rightarrow T_+}
\la A , A' + B' \ra \right | 
&= \left |\lim_{T_1 \rightarrow T_-} \lim_{T_2 \rightarrow T_+} \left \la A , P_k v(0) \right \ra \right | \nonumber \\
&\lesssim a_k(0) \left ( 
\eta^{p-1} \sum_j 2^{-\beta |j - k|} a_j + 2^{-\epsilon_pk} b_k
\right ). \label{prop15}
\end{align}
By the same proof, the same estimate holds for $\la A + B , A' \ra$. 

Finally, we estimate the pairing $\la B, B' \ra$ which we write as 
\begin{align*}
\la &B , B' \ra \\ &= \int_{\delta/N(0)}^{T_2} \int^{-\delta/N(0)}_{T_1}
\left \la \varphi \left ( \frac{x}{|t|} \right ) |u(t)|^{p-1}u(t), P_k^2 \frac{e^{i(\tau - t)\sqrt{-\Delta}}}{(-\Delta)^{1-s_p}}
\varphi \left ( \frac{y}{|\tau|} \right ) |u(\tau)|^{p-1}u(\tau) \right \ra_{L^2} d\tau dt.  
\end{align*}
The operator $P_k^2 \frac{e^{i(\tau - t)\sqrt{-\Delta}}}{(-\Delta)^{1-s_p}}$ has a kernel given by 
\begin{align}
K_k ( x ) = K_k (|x|) = c \int_0^{\pi} \int_0^{+\infty} e^{i|x|\rho \cos \theta} e^{i(t- \tau)\rho} 
\psi^2 \left ( \frac{\rho}{2^k}\right ) \rho^{d-3+2s_p} d \rho \sin^{d-2} d\theta,
\end{align}
where  $\psi \in C^{\infty}_0$ is the Littlewood--Paley multiplier with $\supp \psi \subseteq
[1/2,2]$. As in the proof of Proposition \ref{prop2}, we have the estimate 
\begin{align}
|K_k(x - y)| \lesssim_L \frac{2^{(d-2+2s_p)k}}{\la 2^k |\tau - t|\ra^L},
\end{align}
for every $L \geq 0$ so that by Sobolev embedding  
\begin{align}
\left \la \varphi \left ( \frac{x}{|t|} \right ) |u(t)|^{p-1}u(t), P_k^2 \frac{e^{i(\tau - t)\sqrt{-\Delta}}}{(-\Delta)^{1-s_p}}
\varphi \left ( \frac{y}{|\tau|} \right ) |u(\tau)|^{p-1}u(\tau) \right \ra_{L^2}
\lesssim \frac{2^{(d-2+2s_p)k} |t \tau|^{d-\frac{2p}{p-1}}}{\la 2^k (|t| + |\tau|) \ra^L},
\end{align}
for all $\tau < -\delta/N(0) < \delta/N(0) < t$ and $L$ suffiently large.  

 If $2^k \geq N(0)$, we choose $L > d$ sufficiently large to make the following integral converge and
estimate 
\begin{align*}
\la B , B' \ra
&\lesssim_L \int_{\delta/N(0)}^{+\infty} \int^{-\delta/N(0)}_{-\infty} \frac{2^{(d-2+2s_p)k}
|t|^{d-\frac{2p}{p-1}} |\tau|^{d-\frac{2p}{p-1}}}{\la 2^k (|t| + |\tau|) \ra^L} d\tau dt \\
&\lesssim_L 2^{(d-2+2s_p-L)k} N(0)^{-2d + \frac{4p}{p-1} + L - 2} \\
&\lesssim_L 2^{(d-2+2s_p-L)k} N(0)^{-d + L} N(0)^{2p(s_0 - s_p)} \quad \left ( s_0 - s_p = \frac{2}{p-1} - \frac{d+2}{2p} = \frac{1}{p}(1-s_p) \right )\\
&\lesssim_L 2^{-2k(1-s_p)} N(0)^{2(1-s_p)}.
\end{align*}
If $2^k \leq N(0)$, we split the above integral into integration over $|t| + |\tau| \leq 2^{-k}$ and $|t| + |\tau| \geq 2^{-k}$.  In
the former region, we take $L = 0$ and conclude that 
\begin{align*}
\int \int_{|t| + |\tau| \leq 2^{-k}}\frac{2^{(d-2+2s_p)k}
|t|^{d-\frac{2p}{p-1}} |\tau|^{d-\frac{2p}{p-1}}}{\la 2^k (|t| + |\tau|) \ra^L} d\tau dt 
&\lesssim 2^{(d-2+2s_p)k} 2^{-\left (2d - \frac{4p}{p-1} + 2 \right )k} \\ 
&\lesssim 2^{2(s_p - 1)k}2^{2p \left ( \frac{4p}{p-1} - d -2 \right )k} \\
&\lesssim 2^{2(s_p - 1)k}2^{2p(s_0 - s_p)k} \\
&\lesssim 2^{2(s_p - 1)k}2^{2(1-s_p)k} \\
&\lesssim 1.
\end{align*}
In the region $|t| + |\tau| \geq 2^{-k}$ we choose $L > d$ sufficiently large to make the following integral converge and
estimate
\begin{align*}
\int \int_{|t| + |\tau| \geq 2^{-k}} \frac{2^{(d-2 + 2s_p)k}
|t|^{d-\frac{2p}{p-1}} |\tau|^{d-\frac{2p}{p-1}}}{\la 2^k (|t| + |\tau|) \ra^L} d\tau dt
&\lesssim_L 2^{(d-2+2s_p-L)k} 2^{\left( -2d + \frac{4p}{p-1} + L - 2\right) k} \lesssim 1. 
\end{align*}
Thus, we have the estimate 
\begin{align*}
 \la B , B' \ra \lesssim 1
\end{align*}
if $2^k \leq N(0)$.  In summary, we have proved that 
\begin{align}
\la B , B' \ra \lesssim \min \left ( 2^{-2k(1-s_p)}N(0)^{2(1-s_p)}, 1\right ) \label{prop13}.
\end{align}

Combining \eqref{prop14}, \eqref{prop15}, and \eqref{prop13}, we obtain the estimate 

\begin{align*}
a^2_k(0) \lesssim& a_k(0) \left ( \eta^{p-1} \sum_j 2^{-\beta |j - k|} a_j + 2^{-\epsilon_p k} b_k \right )
+ \eta^{2(p-1)} \left ( \sum_j 2^{-\beta |j - k|} a_j \right )^2 \\ &+ 2^{-2\epsilon_p k} b_k^2
+  \min \left ( 2^{-2(1-s_p)k}N(0)^{2(1-s_p)}, 1\right ).
\end{align*}
Hence, we have 
\begin{align*}
a_k(0) \lesssim& \eta^{p-1} \sum_j 2^{-\beta |j - k|} a_j + 2^{-\epsilon_p k} b_k +
\min \left ( 2^{-(1-s_p)k}N(0)^{1-s_p}, 1\right ).
\end{align*}
Using the definitions of $\alpha_k$ and $\alpha_k(0)$ and Claim \ref{clm1} we obtain
\begin{align*}
\alpha_k(0) &\lesssim \eta^{p-1} \alpha_k + \sum_j 2^{-\frac{\beta}{2}|j-k|} 2^{-\epsilon_p j} b_j 
+ \sum_j 2^{-\frac{\beta}{2}|j-k|} 2^{-\epsilon_p j} c_j \\ &\lesssim
\eta^{p-1} \alpha_k(0) + \sum_j 2^{-\frac{\beta}{2}|j-k|} 2^{-\epsilon_p j} b_j
+ \sum_j 2^{-\frac{\beta}{2}|j-k|} 2^{-\epsilon_p j} c_j
\end{align*}
where $c_j 
= N(0)^{\epsilon_p} \min  \left ( 2^{-(1 - s_p - \epsilon_p)j} N(0)^{1-s_p-\epsilon_p}, 2^{\epsilon_p j} N(0)^{-\epsilon_p} \right )$.  Note that
\begin{align*}
\| \{ c_j \} \|_{\ell^2} \lesssim N(0)^{\epsilon_p}
\end{align*}
By choosing $\eta$ sufficiently 
small, we have that 
\begin{align*}
\alpha_k(0) \lesssim \sum_j 2^{-\frac{\beta}{2}|j-k|} 2^{-\epsilon_p j} b_j +
 \sum_j 2^{-\frac{\beta}{2}|j-k|} 2^{-\epsilon_p j} c_j.
\end{align*}
By Minkowski's inequality and the fact that $\epsilon_p \leq \beta/4$, we conclude that 
\begin{align*}
\| 2^{k\epsilon_p} \alpha_k(0) \|_{\ell^2} \lesssim \| \{b_k\} \|_{\ell^2} + 
\| \{c_k\} \|_{\ell^2} \lesssim N(0)^{\epsilon_p}.
\end{align*}
This finishes the proof of Lemma \ref{lem6}.

\end{proof}

We will now show that we gain regularity in increments determined by $\epsilon_p$ and the distance we are from having $\dot H^1 \times L^2$
regularity.  More precisely we will prove the following. 

\begin{lem}\label{lem7}
Let $u$ be as in Proposition \ref{prop5}.  Assume additionally that for all $t \in (T_-,\infty)$, $\vec u(t) \in \dot H^s \times \dot H^{s-1}$.  Define 
\begin{align*}
\epsilon_s = \min\{1-s, \epsilon_p\}.
\end{align*}
Then for every $0 < \epsilon < \epsilon_s$ and 
for all $t \in (T_-,\infty)$, $\vec u(t) \in \dot H^{s + \epsilon} \times 
\dot H^{s + \epsilon}$ and 
\begin{align*}
\| \vec u(t) \|_{\dot H^{s + \epsilon} \times \dot H^{s + \epsilon - 1}}
\lesssim N(t)^{s + \epsilon - s_p}.
\end{align*}  
\end{lem}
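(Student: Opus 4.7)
The plan is to repeat the double Duhamel argument of Lemma \ref{lem6} with the initial regularity shifted from $s_p$ to $s$, and to track how the extra $s - s_p$ regularity propagates through each estimate. Fix $t_0 = 0$ by time translation and let $J = [-\delta/N(0), \delta/N(0)]$ with $\delta = \delta(\eta)$ from Lemma \ref{lem2}. Define the frequency envelopes exactly as in Claim \ref{clm1}, but weighted at level $s$ rather than $s_p$:
$$a_k := 2^{sk} \|P_k u\|_{L^\infty_t(J;L^2_x)} + 2^{(s-1)k} \|P_k \partial_t u\|_{L^\infty_t(J;L^2_x)} + 2^{(s-s_p+\beta)k} \|P_k u\|_{W(J)},$$
with $\beta$ as in Claim \ref{clm1}, together with $a_k(0)$, $\alpha_k$, $\alpha_k(0)$ defined analogously. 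Since $s - s_p \in [0, 1-s_p)$ lies in the range of the persistence-of-regularity theory, the paraproduct/fractional-Leibniz estimates of Claim \ref{clm1} apply verbatim and give, after choosing $\eta$ small, the bootstrap $\alpha_k \lesssim \alpha_k(0)$.

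Next, I expand $\langle P_k v(0), P_k v(0)\rangle_{\dot H^{s+\epsilon}}$ using one forward and one backward Duhamel formula, then split each side into $A + B$ and $A' + B'$ as in Lemma \ref{lem6}: $A$ absorbs the short-time integral on $|t| \leq \delta/N(0)$ together with the exterior-cone integrand $(1 - \varphi(x/t))|u|^{p-1}u$, while $B$ keeps the interior-cone integrand $\varphi(x/t)|u|^{p-1}u$. The short-time piece contributes $\lesssim \eta^{p-1}\sum_j 2^{-\beta|j-k|}a_j$ at level $\dot H^{s+\epsilon}$ by the Strichartz/paraproduct estimate of Claim \ref{clm1}. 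The exterior-cone piece is controlled by the radial Sobolev argument leading to \eqref{prop12}, in which one now takes $\sigma = s - s_p + \epsilon$; this requires $\sigma \leq 1 - s_p$, i.e.\ $\epsilon \leq 1 - s$, which is precisely where the $\epsilon < 1 - s$ constraint enters. Choosing $\sigma(k) = s - s_p + \epsilon/2$ when $2^k < N(0)$ and $\sigma(k) = 1 - s_p$ when $2^k \geq N(0)$ yields $\|A\|_{\dot H^{s+\epsilon}} \lesssim \eta^{p-1}\sum_j 2^{-\beta|j-k|}a_j + 2^{-\epsilon k} b_k$ with an $\ell^2$ sequence $b_k$ satisfying $\|\{b_k\}\|_{\ell^2} \lesssim N(0)^{s + \epsilon - s_p}$, and the same estimate for $A'$ uniformly in $T_1, T_2$.

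The cross terms $\langle A, A' + B'\rangle$ and $\langle A + B, A'\rangle$ are handled exactly as in Lemma \ref{lem6}: Lemma \ref{lem1} converts the unknown Duhamel tail into a pairing with $P_k v(0)$ itself, producing a factor of $a_k(0)$ multiplied by the $A$-bound above. The inner cone pairing $\langle B, B'\rangle$ is dominated by the kernel of $P_k^2 e^{i(\tau - t)\sqrt{-\Delta}}/(-\Delta)^{1-s-\epsilon}$; integration by parts in the radial frequency $\rho$ and Sobolev embedding $\dot H^{s_p} \hookrightarrow L^{d(p-1)/2}$ give the analogue of the bound $\lesssim \min\bigl(2^{-2(1-s-\epsilon)k} N(0)^{2(1-s-\epsilon)}, 1\bigr)$, using the same identity $s_0 - s_p = (1-s_p)/p$ to match powers of $N(0)$. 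Combining the four pieces yields
$$a_k(0) \lesssim \eta^{p-1}\sum_j 2^{-\beta|j-k|} a_j + 2^{-\epsilon k} b_k + \min\bigl(2^{-(1-s-\epsilon)k} N(0)^{1-s-\epsilon}, 1\bigr),$$
and passing to the smoothed envelope $\alpha_k(0)$, choosing $\eta$ small, and summing $\|\{2^{\epsilon k}\alpha_k(0)\}\|_{\ell^2}$ with Minkowski's inequality closes the estimate with bound $\lesssim N(0)^{s + \epsilon - s_p}$.

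The main obstacle is the bookkeeping in the kernel estimate for $\langle B, B'\rangle$: one must verify that the shift of two powers of $-\Delta$ into $(-\Delta)^{1-s-\epsilon}$ and the corresponding change in $\rho^{d-3 + 2(s+\epsilon)}$ in the kernel integrand still produce both the fast off-diagonal decay in $2^k(|t|+|\tau|)$ and a finite time integral after integration over $|t|, |\tau| \geq \delta/N(0)$. The constraint $\epsilon < \epsilon_p$ ensures that the residual loss in the Strichartz bootstrap remains controllable by the same $\eta^{p-1}$ smallness used in Claim \ref{clm1}, while $\epsilon < 1 - s$ is exactly what keeps the exterior-cone $\sigma$ within $(0, 1-s_p]$. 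With these two conditions, the argument closes in the same form as Lemma \ref{lem6}.
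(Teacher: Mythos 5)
Your plan is the same as the paper's: replay the double--Duhamel/frequency-envelope bootstrap of Lemma~\ref{lem6} at regularity level $s$ rather than $s_p$, take the radial-Sobolev exponent $\sigma$ up to $1-s$, and close in $\ell^2$ via the kernel estimate for $\langle B, B' \rangle$, with $\epsilon < 1-s$ controlling the cone-exterior piece and $\epsilon < \epsilon_p$ keeping the bootstrap in range. Two small remarks on claims made in passing: the paraproduct bound for $P_k F(P_{\leq k-1}u)$ in the level-$s$ version (Claim~\ref{clm1s}) uses a full derivative $|D_x|$ rather than $|D_x|^\rho$, so it is not quite ``verbatim'' --- one needs the extra observations $\gamma = \beta + (s-s_p) \geq \beta$ and $1-\gamma \geq \beta$ coming from $s, s_p < 1$; and the paper pairs $P_k v(0)$ against itself in $\dot H^{s}$ and attaches the weight $2^{\epsilon k}$ only at the final $\ell^2$ summation, whereas pairing directly in $\dot H^{s+\epsilon}$ and \emph{also} weighting by $2^{\epsilon k}$ at the end, as your last paragraph suggests, would double-count.

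The one step that, as written, would actually fail is the power of $N(0)$ in the kernel bound. The Sobolev embedding $\dot H^{s_p} \hookrightarrow L^{d(p-1)/2}$ applied to the nonlinearity always produces critical-scaling quantities, so the $N(0)$-power emerging from the time integral is $N(0)^{2(1-s_p)}$ via the identity $s_0 - s_p = (1-s_p)/p$, independently of the Sobolev level at which you pair; only the $2^k$-exponent shifts from $-2(1-s_p)$ to $-2(1-s)$. The correct bound (at the $\dot H^s$ pairing level) is therefore $2^{-2(1-s)k}N(0)^{2(1-s_p)}$ for $2^k \geq N(0)$ and $N(0)^{2(s-s_p)}$ for $2^k \leq N(0)$, not $\min\bigl(2^{-2(1-s-\epsilon)k}N(0)^{2(1-s-\epsilon)},\,1\bigr)$. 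Your version loses the overall $N(0)^{2(s+\epsilon-s_p)}$ factor when you sum: the $\ell^2$ sum over $2^k \geq N(0)$ of $2^{2\epsilon k}\cdot 2^{-2(1-s-\epsilon)k}N(0)^{2(1-s-\epsilon)}$ is a constant $\simeq 1$, not $N(0)^{2(s+\epsilon-s_p)}$, which destroys exactly the $N(t)$-decay that Proposition~\ref{noenergycas} needs to close the frequency-cascade argument. With the correct $N(0)$ powers the geometric sum over $2^k \geq N(0)$ is dominated by $2^k\simeq N(0)$ and yields $N(0)^{2(s+\epsilon-s_p)}$ as required.
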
 
		
\begin{proof}[Proof of Lemma \ref{lem7}]
		
The proof of Lemma \ref{lem7} is very similar  to the proof of Lemma \ref{lem6} so we will omit some details.  Let $\epsilon$ be as above.   
	Without loss of generality, we assume that $t_0 = 0$.  We seek a frequency envelope $\{\alpha_k (0)\}_k$ so that 
	\begin{align}
	\| (P_k u(0), P_k \partial_t u(0)) \|_{\dot H^{s} \times \dot H^{s-1}} &\lesssim  \alpha_k(0)
	\end{align}
	with $\| \{ 2^{\epsilon k} \alpha_k(0) \} \|_{\ell^2} \lesssim N(0)^{s+\epsilon - s_p}$.

	\begin{clm}\label{clm1s}
		There exists $\eta_0 > 0$ with the following property.  Let $0 < \eta < \eta_0$, and let $J = [-\delta/N(0), \delta/N(0)]$ where $\delta = \delta(\eta)$ is
		chosen as in Lemma \ref{lem2}.  Let $\beta$ be as in Lemma \ref{clm1} and define 
		\begin{align*}
		\gamma = \beta + (s - s_p).
		\end{align*}
		  Define a sequence of real numbers $\{a_k\}_k$ according to the following:
		if $s_p > \frac{1}{2}$, define   
		\begin{align}
		a_k &:= 2^{s k} \| P_k u \|_{L^{\infty}_t(J ; L_x^2)} +  
		2^{(s-1)k} \| P_k \partial_t u \|_{L^{\infty}_t(J ; L_x^2)} + 2^{\gamma k} \| P_k u \|_{W(J)}
		\end{align}
		and if $s_p = \frac{1}{2}$, define
		\begin{align}
		a_k &:= 
		\begin{cases}
		2^{sk} \| P_k u \|_{L^{\infty}_t(J ; L_x^2)} +  
		2^{-sk} \| P_k \partial_t u \|_{L^{\infty}_t(J ; L_x^2)} + 2^{k(s-1/4)} \| P_k u \|_{L^{20/3}_tL^{5/2}_x(J \times \R^4)} &\mbox{ if } d = 4,\\
		2^{sk} \| P_k u \|_{L^{\infty}_t(J ; L_x^2)} +  
		2^{-sk} \| P_k \partial_t u \|_{L^{\infty}_t(J ; L_x^2)} + 2^{k(s-1/4)} \| P_k u \|_{L^{6}_tL^{12/5}_x(J \times \R^5)} &\mbox{ if } d = 5.
		\end{cases}
		\end{align}
		Define 
		\begin{align*}
		a_k(0) &:= 2^{s_pk} \| P_k u(0) \|_{L^2_x} +  
		2^{(s_p-1)k} \| P_k \partial_t u(0) \|_{L_x^2},
		\end{align*}
		and frequency envelopes $\{\alpha_k\}$ and $\{\alpha_k(0)\}$ by 
		\begin{align}
		\alpha_k &:= \sum_j 2^{-\frac{\beta}{2}|j-k|} a_j, \\
		\alpha_k(0) &:= \sum_j 2^{-\frac{\beta}{2} |j-k|} a_j(0).
		\end{align}
		Then for $\eta_0$ sufficiently small we have
		\begin{align}
		a_k &\lesssim a_k(0) + \eta^2 \sum_j 2^{-\beta |j - k|} a_j, \\
		\alpha_k &\lesssim \alpha_k(0). 
		\end{align}
	\end{clm}
	
	We remark that the additional regularity assumption in Lemma \ref{lem7}
	and Strichartz estimates imply that the numbers $\alpha_k$ and $\alpha_k(0)$ are finite for each $k$. 
	
	\begin{proof}
		We first handle the case $s_p > 1/2$.  The Strichartz estimates at the $\dot H^s \times \dot H^{s-1}$ regularity localized to frequency $\simeq 2^k$ read
		\begin{align*}
		a_k = 2^{s k} & \| P_k u \|_{L^{\infty}_t(J ; L_x^2)} +  
		2^{(s-1)k} \| P_k \partial_t u \|_{L^{\infty}_t(J ; L_x^2)} + 2^{\gamma k} \| P_k u \|_{W(J)} \\
		&\lesssim a_k(0) + 2^{\gamma k} \| P_k (|u|^{p-1}u) \|_{N(J)}. 
		\end{align*}
		As before, we let $F(u) = |u|^{p-1}u$.  We first observe by Bernstein's inequalities that  
		\begin{align*}
		\| P_k F(P_{\leq k - 1} u) \|_{L_x^{\frac{2(d+1)}{d+3}}}
		\lesssim& 2^{-k} \| P_k |D_x| F(P_{\leq k -1} u ) \|_{L_x^{\frac{2(d+1)}{d+3}}} \\
		\lesssim& 2^{-k} \| P_{\leq k -1} u \|_{L_x^{\frac{(p-1)(d+1)}{2}}}^{p-1} \| |D_x| P_{\leq k -1} u \|_{L_x^{\frac{2(d+1)}{d-1}}} \\
		\lesssim& 2^{-k} \| u \|_{L_x^{\frac{(p-1)(d+1)}{2}}}^{p-1} \sum_{j \leq k -1} 
		\| |D_x| P_j u \|_{L_x^{\frac{2(d+1)}{d-1}}} \\
		\lesssim& \| u \|_{L_x^{\frac{(p-1)(d+1)}{2}}}^{p-1} \sum_{j \leq k -1} 2^{-(k - j)}
		\| P_j u \|_{L_x^{\frac{2(d+1)}{d-1}}}.
		\end{align*}
 By H\"older's inequality in time, we have 
		\begin{align*}
		2^{\gamma k} \| P_k F(P_{\leq k - 1} u) \|_{N(J)} &\lesssim 
		\| u \|_{S(J)}^{p-1} \sum_{j \leq k -1} 2^{-(1 - \gamma)(k - j)} 2^{\gamma j} \| P_j u \|_{W(J)} \\
		&\lesssim \eta^{p-1} \sum_{j \leq k -1} 2^{-(1-\gamma)(k - j)} a_j.
		\end{align*}
		By the fundamental theorem of calculus 
		\begin{align}
		P_k ( F(u) - F(P_{\leq k - 1} u) ) 
		= P_k \left ( \int_0^1 F' \left ( t P_{\geq k} u  + P_{\leq k -1} u \right ) dt P_{\geq k} u \right ).
		\end{align}
		Again by Holder's inequality in space and time and Bernstein's inequalities we obtain 
		\begin{align*}
		2^{\gamma k} \| P_k ( F(u) - F(P_{\leq k - 1} u) ) \|_{N(I)} &\lesssim \| u \|_{S(J)}^{p-1} 2^{\gamma k}
		\| P_{\geq k} u \|_{W(J)} \\
		\lesssim \eta^{p-1} \sum_{j \geq k} 2^{-\gamma(j - k)} a_j.
		\end{align*}
		Combining the previous two inequalities and using the fact that 
		$\gamma \geq \beta$ and $(1-\gamma) \geq \beta$ since $s_p$ and 
		$s$ are less than 1, we see that
		\begin{align}
		2^{\gamma k} \| P_k F(u) \|_{N(I)} \lesssim \eta^{p-1} \sum_j 2^{-\beta |j - k|} a_j.
		\end{align}
		Thus, 
		\begin{align*}
		a_j \lesssim a_j(0) + \eta^{p-1} \sum_j 2^{-\beta |j - k|} a_j.  
		\end{align*}
		As in the proof of Claim \ref{clm1}, we then deduce that 
		\begin{align*}
		\alpha_k \lesssim \alpha_k(0)
		\end{align*}
as desired.

\end{proof}
	
	Returning to the proof of Lemma \ref{lem7}, we see that the proof of the claim also yields the estimate
	(with $\beta$ as in Claim \ref{clm1})
	\begin{align}
	2^{s k} \left \| 
	P_k \int_0^{\delta /N(0)} \frac{e^{-it\sqrt{\Delta}}}{\sqrt{-\Delta}} |u(t)|^{p-1} u(t) dt
	\right \|_{L^2_x} 
	\lesssim \eta^{p-1} \sum_j 2^{-\beta |j - k|} a_j. \label{prop11s}
	\end{align}
Let $\varphi \in C^\infty_0(\R^d)$ be radial such that $\varphi(x) = 1$ for $|x| \leq 1/8$ and $\varphi = 0$ for $|x| \geq 1/4$. 
	By \eqref{prop12} we have for all $\sigma \in (0,1-s]$
	\begin{align}
	\int_{\delta / N(0)}^{T_2} 
	\left \| 
	\frac{e^{it \sqrt{-\Delta}}}{\sqrt{-\Delta}} \left ( 1 - \varphi \left (\frac{x}{t} \right ) \right ) |u(t)|^{p-1} u(t) dt
	\right \|_{\dot H^{s + \sigma}} \lesssim N(0)^{s - s_p + \sigma} \delta^{-(s - s_p + \sigma)}, \label{prop12s}
	\end{align}
	where the implied constant is uniform in $T_2 \in (0,T_+)$.  
	We define $v$ via 
	\begin{align*}
	v(t) = u(t) + \frac{i}{\sqrt{-\Delta}} \partial_t u(t). 
	\end{align*}
	Then $\| v(0) \|_{\dot H^{s}} = \| \vec u(t) \|_{\dot H^s \times \dot H^{s-1}}$ and $v$ solves
	\begin{align*}
	\partial_t = -i \sqrt{-\Delta} v - \mu \frac{i}{\sqrt{-\Delta}} |u|^{p-1} u.
	\end{align*}
	
	We now estimate $\| P_k v(0) \|_{\dot H^{s}}$.  By the Duhamel formula,
	for any $T_- < T_1 < 0 < T_2 < T_+$, 
	\begin{align*}
	\la P_k v(0) &, P_k v(0) \ra_{\dot H^{s}} \\
	=& \left \la P_k \left ( e^{iT_2 \sqrt{-\Delta}} v(T_2) + i \mu \int_0^{T_2} \frac{e^{it \sqrt{-\Delta}}}{\sqrt{-\Delta}} |u(t)|^{p-1}u(t) dt
	\right ), \right. \\
	& \left. P_k \left (  e^{iT_1 \sqrt{-\Delta}} v(T_1) - i \mu \int^0_{T_1} \frac{e^{i\tau \sqrt{-\Delta}}}{
		\sqrt{-\Delta}} |u(\tau)|^{p-1}u(\tau) d\tau
	\right )  \right \ra_{\dot H^{s}}.
	\end{align*}
	As in the proof of Proposition \ref{prop2}, we can conclude via Lemma \ref{lem1}, \eqref{prop12s} and the fact that we are localizing to frequency $2^k$ that 
	\begin{align*}
	\la& P_k v(0), P_k v(0) \ra_{\dot H^{s}} = \\
	&\lim_{T_1 \rightarrow T_-} \lim_{T_2 \rightarrow T_+}
	\left \la 
	P_k  \left ( \int_0^{T_2} \frac{e^{it \sqrt{-\Delta}}}{\sqrt{-\Delta}} |u(t)|^{p-1}u(t) dt \right ), 
	P_k  \left ( \int^0_{T_1} \frac{e^{i\tau \sqrt{-\Delta}}}{\sqrt{-\Delta}} |u(\tau)|^{p-1}u(\tau) d\tau \right )
	\right \ra_{\dot H^{s}}.
	\end{align*}
	We split the Duhamel integrals into pieces and write the previous $\dot H^{s}$ pairing as a sum of $\dot H^{s}$ pairings
	\begin{align*}
	\la A + B, A' \ra + \la A , A' + B' \ra + \la B, B'\ra - \la A , A'\ra,
	\end{align*}
	where 
	\begin{align}
	A &= P_k \left ( \int_0^{\delta/N(0)} \frac{e^{it \sqrt{-\Delta}}}{\sqrt{-\Delta}} |u(t)|^{p-1}u(t) dt +
	\int_{\delta/N(0)}^{T_2} \frac{e^{it \sqrt{-\Delta}}}{\sqrt{-\Delta}} \left ( 1 - \varphi \left ( \frac{x}{t} \right) \right )
	|u(t)|^{p-1}u(t) dt \right ), \\
	B &= P_k \left (\int_{\delta/N(0)}^{T_2} \frac{e^{it \sqrt{-\Delta}}}{\sqrt{-\Delta}} \varphi \left ( \frac{x}{t} \right ) |u(t)|^{p-1}u(t) dt \right ),
	\end{align}
	and $A',B'$ are the analogous integrals in the negative time direction.  
	
	We first estimate $A$.  By \eqref{prop11s} and \eqref{prop12s} we have the estimate (with $\beta$ defined in Claim \ref{clm1})
	\begin{align*}
	\| A \|_{\dot H^{s}} \lesssim 
	\eta^{p-1} \sum_j 2^{-\beta |j - k|} a_j + 2^{-\epsilon k} b_k
	\end{align*}
	where $b_k = 2^{-k(\sigma(k) - \epsilon)} N(0)^{s - s_p +\sigma(k)}$, and we are free to choose 
	$\sigma(k) \in (0,1-s]$ for each $k$. We choose  
	\begin{align*}
	\sigma(k) = 
	\begin{cases}
	1-s, &\mbox{if } 2^k \geq N(0), \\
	\epsilon/2, &\mbox{if } 2^k < N(0).
	\end{cases}
	\end{align*}
	Then 
	\begin{align*}
	b_k = 
	\begin{cases}
	2^{-k(1-s - \epsilon)} N(0)^{1-s_p} &\mbox{if } 2^k \geq N(0), \\
	2^{\epsilon k/2} N(0)^{s - s_p + \epsilon/2} &\mbox{if } 2^k < N(0).
	\end{cases}
	\end{align*}
	Since $\epsilon < 1 -s$ we have $(b_k) \in \ell^2$ and 
	\begin{align}
	\| \{ b_k \} \|_{\ell^2} \lesssim N(0)^{s - s_p + \epsilon}.
	\end{align}
	In summary, we have the estimate
	\begin{align*}
	\| A \|_{\dot H^{s}} \lesssim \eta^{p-1} \sum_j 2^{-\beta |j - k|} a_j + 2^{-\epsilon k} b_k, \quad 
	\| \{b_k\}_k \|_{\ell^2} \lesssim N(0)^{s - s_p + \epsilon},
	\end{align*}
	uniformly in $T_2$.  The same estimate holds with $A$ replaced by $A'$ uniformly in $T_1$. This implies that 
	\begin{align}
	\la A , A' \ra \lesssim \eta^{2(p-1)} \left ( \sum_j 2^{-\beta |j - k|} a_j \right )^2+ 2^{-2\epsilon k} b_k^2 \label{prop14s}
	\end{align}
	uniformly in $T_1$ and $T_2$.   
	
	We now estimate $\la A, A' + B' \ra$.  By Lemma \ref{lem1}, 
	\begin{align*}
	- \mu i \int_{T_1}^0 \frac{e^{i \tau \sqrt{-\Delta}}}{\sqrt{-\Delta}} |u(\tau)|^{p-1} u(\tau) d\tau \rightharpoonup v(0)
	\end{align*}
	in $\dot H^{s_p}$ as $T_1 \rightarrow T_-$. As in the proof of Lemma \ref{lem6}, we have that 
	\begin{align}
	\left |\lim_{T_1 \rightarrow T_-} \lim_{T_2 \rightarrow T_+}
	\la A , A' + B' \ra \right | 
	&= \left |\lim_{T_1 \rightarrow T_-} \lim_{T_2 \rightarrow T_+} \left \la A , P_k v(0) \right \ra \right | \nonumber \\
	&\lesssim a_k(0) \left ( 
	\eta^{p-1} \sum_j 2^{-\beta |j - k|} a_j + 2^{-\epsilon_pk} b_k
	\right ). \label{prop15s}
	\end{align}
	By the same proof, the same estimate holds for $\la A + B , A' \ra$. 
	
	Finally, we estimate the pairing $\la B, B' \ra$ which we write as 
	\begin{align*}
	\la &B , B' \ra \\ &= \int_{\delta/N(0)}^{T_2} \int^{-\delta/N(0)}_{T_1}
	\left \la \varphi \left ( \frac{x}{|t|} \right ) |u(t)|^{p-1}u(t), P_k^2 \frac{e^{i(\tau - t)\sqrt{-\Delta}}}{(-\Delta)^{1-s_p}}
	\varphi \left ( \frac{y}{|\tau|} \right ) |u(\tau)|^{p-1}u(\tau) \right \ra_{L^2} d\tau dt.  
	\end{align*}
	
	The operator $P_k^2 \frac{e^{i(\tau - t)\sqrt{-\Delta}}}{(-\Delta)^{1-s}}$ has a kernel given by 
	\begin{align}
	K_k ( x ) = K_k (|x|) = c \int_0^{\pi} \int_0^{+\infty} e^{i|x|\rho \cos \theta} e^{i(t- \tau)\rho} 
	\psi^2 \left ( \frac{\rho}{2^k}\right ) \rho^{d-3+2s} d \rho \sin^{d-2} d\theta,
	\end{align}
	where  $\psi \in C^{\infty}_0$ is the Littlewood--Paley multiplier with $\supp \psi \subseteq
	[1/2,2]$. As in the proof of Proposition \ref{prop2}, we have the estimate 
	\begin{align}
	|K_k(x - y)| \lesssim_L \frac{2^{(d-2+2s)k}}{\la 2^k |\tau - t|\ra^L},
	\end{align}
	for every $L \geq 0$ so that by Sobolev embedding  
	\begin{align}
	\left \la \varphi \left ( \frac{x}{|t|} \right ) |u(t)|^{p-1}u(t), P_k^2 \frac{e^{i(\tau - t)\sqrt{-\Delta}}}{(-\Delta)^{1-s}}
	\varphi \left ( \frac{y}{|\tau|} \right ) |u(\tau)|^{p-1}u(\tau) \right \ra_{L^2}
	\lesssim \frac{2^{(d-2+2s)k} |t \tau|^{d-\frac{2p}{p-1}}}{\la 2^k (|t| + |\tau|) \ra^L},
	\end{align}
	for all $\tau < -\delta/N(0) < \delta/N(0) < t$ and $L$ sufficiently large.  
	
	If $2^k \geq N(0)$, we choose $L > d$ sufficiently large to make the following integral converge and
	estimate 
	\begin{align*}
	\la B , B' \ra
	&\lesssim_L \int_{\delta/N(0)}^{+\infty} \int^{-\delta/N(0)}_{-\infty} \frac{2^{(d-2+2s)k}
		|t|^{d-\frac{2p}{p-1}} |\tau|^{d-\frac{2p}{p-1}}}{\la 2^k (|t| + |\tau|) \ra^L} d\tau dt \\
	&\lesssim_L 2^{(d-2+2s-L)k} N(0)^{-2d + \frac{4p}{p-1} + L - 2} \\
	&\lesssim_L 2^{(d-2+2s-L)k} N(0)^{-d + L} N(0)^{2p(s_0 - s_p)} \quad \left ( s_0 - s_p = \frac{2}{p-1} - \frac{d+2}{2p} = \frac{1}{p}(1-s_p) \right )\\
	&\lesssim_L 2^{-2k(1-s)} N(0)^{2(1-s_p)}.
	\end{align*}
	If $2^k \leq N(0)$, we split the above integral into integration over $|t| + |\tau| \leq 2^{-k}$ and $|t| + |\tau| \geq 2^{-k}$.  In
	the former region, we take $L = 0$ and conclude that 
	\begin{align*}
	\int \int_{|t| + |\tau| \leq 2^{-k}}\frac{2^{(d-2+2s)k}
		|t|^{d-\frac{2p}{p-1}} |\tau|^{d-\frac{2p}{p-1}}}{\la 2^k (|t| + |\tau|) \ra^L} d\tau dt 
	&\lesssim 2^{(d-2+2s)k} 2^{-\left (2d - \frac{4p}{p-1} + 2 \right )k} \\ 
	&\lesssim 2^{2(s - 1)k}2^{\left ( \frac{4p}{p-1} - d -2 \right )k} \\
	&\lesssim 2^{2(s - 1)k}2^{2p(s_0 - s_p)k} \\
	&\lesssim 2^{2(s - 1)k}2^{2(1-s_p)k} \\
	&\lesssim 2^{2(s - s_p)} \\
	&\lesssim N(0)^{2(s-s_p)}.
	\end{align*}
	In the region $|t| + |\tau| \geq 2^{-k}$ we choose $L > d$ sufficiently large to make the following integral converge and
	estimate
	\begin{align*}
	\int \int_{|t| + |\tau| \geq 2^{-k}} \frac{2^{(d-2 + 2s_p)k}
		|t|^{d-\frac{2p}{p-1}} |\tau|^{d-\frac{2p}{p-1}}}{\la 2^k (|t| + |\tau|) \ra^L} d\tau dt
	&\lesssim_L 2^{(d-2+2s_p-L)k} 2^{\left( -2d + \frac{4p}{p-1} + L - 2\right) k} \lesssim 2^{2(s-s_p)} \lesssim N(0)^{2(s-s_p)}. 
	\end{align*}
	Thus, we have the estimate 
	\begin{align*}
	\la B , B' \ra \lesssim N(0)^{s-s_p}
	\end{align*}
	if $2^k \leq N(0)$.  In summary, we have proved that 
	\begin{align}
	\la B , B' \ra \lesssim 2^{-2\epsilon} c_k^2 \label{prop13s}
	\end{align}
	where 
	\begin{align*}
	c_k := 
	\begin{cases}
	2^{-(1-s-\epsilon)k} N(0)^{1-s_p} &\mbox{ if } 2^k > N(0), \\
	2^{\epsilon k} N(0)^{s-s_p} &\mbox{ if } 2^k \leq N(0).
	\end{cases}
	\end{align*}
	In particular, since $\epsilon < 1 - s$ we see that $\{ c_j \} \in \ell^2$ and
	\begin{align}\label{cj ell2}
	\| \{ c_j \} \|_{\ell^2} \lesssim N(0)^{s-s_p + \epsilon}.
	\end{align}
	
	Combining \eqref{prop14s}, \eqref{prop15s}, and \eqref{prop13s}, we obtain the estimate 
	
	\begin{align*}
	a^2_k(0) \lesssim& a_k(0) \left ( \eta^{p-1} \sum_j 2^{-\beta |j - k|} a_j + 2^{-\epsilon k} b_k \right )
	+ \eta^{2(p-1)} \left ( \sum_j 2^{-\beta |j - k|} a_j \right )^2 \\ &+ 2^{-2\epsilon k} b_k^2
	+  2^{-2\epsilon k} c_k^2
	\end{align*}
	Hence, we have 
	\begin{align*}
	a_k(0) \lesssim& \eta^{p-1} \sum_j 2^{-\beta |j - k|} a_j + 2^{-\epsilon k} b_k +
	2^{-\epsilon k} c_k.
	\end{align*}
	Using the definitions of $\alpha_k$ and $\alpha_k(0)$ and Claim \ref{clm1} we obtain
	\begin{align*}
	\alpha_k(0) &\lesssim \eta^{p-1} \alpha_k + \sum_j 2^{-\frac{\beta}{2}|j-k|} 2^{-\epsilon_p j} b_j +
	 \sum_j 2^{-\frac{\beta}{2}|j-k|} 2^{-\epsilon_p j} c_j \\ &\lesssim
	\eta^{p-1} \alpha_k(0) + \sum_j 2^{-\frac{\beta}{2}|j-k|} 2^{-\epsilon_p j} b_j +
	 \sum_j 2^{-\frac{\beta}{2}|j-k|} 2^{-\epsilon_p j} c_j
	\end{align*}
	By choosing $\eta$ sufficiently 
	small, we have that 
	\begin{align*}
	\alpha_k(0) \lesssim \sum_j 2^{-\frac{\beta}{2}|j-k|} 2^{-\epsilon j} b_j
	+ \sum_j 2^{-\frac{\beta}{2}|j-k|} 2^{-\epsilon j} c_j.
	\end{align*}
	By Minkowski's inequality and the fact that $\epsilon \leq \beta/4$, we conclude that 
	\begin{align*}
	\| 2^{k\epsilon_p} \alpha_k(0) \|_{\ell^2} \lesssim \| \{b_k\} \|_{\ell^2} + 
	\| \{c_k\} \|_{\ell^2} \lesssim N(0)^{s-s_p + \epsilon}.
	\end{align*}
	This finishes the proof of Lemma \ref{lem7}.
\end{proof}

From Lemma \ref{lem6} and Lemma \ref{lem7}, we immediately deduce 
Proposition \ref{prop5} and Proposition \ref{prop1}.

\qed

\subsection{Rigidity argument for the frequency cascade case}

Based on Proposition \ref{prop2}, we can quickly show that in the frequency cascade case 
\begin{align*}
\liminf_{t \rightarrow +\infty} N(t) = 0, 
\end{align*}
a solution to \eqref{nlw} with the compactness property must be identically 0. 

\begin{ppn}\label{sol}\label{noenergycas}
Let $u \in C(\R ; (\dot H^{s_p} \times \dot H^{s_p-1}) \cap (\dot H^1 \times L^2))$ be a solution to \eqref{nlw} on 
$I_{\max}(u) = (T_-, +\infty)$ with the compactness property such that 
\begin{align}
\liminf_{t \rightarrow +\infty} N(t) = 0.
\end{align}
Then $u = 0$.
\end{ppn}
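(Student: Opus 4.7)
The plan is to combine Proposition \ref{prop2} with a scaling computation and conservation of energy. The cascade hypothesis furnishes a sequence $t_n \to +\infty$ with $N(t_n) \to 0$, and Proposition \ref{prop2} then gives
\[
\|\nabla u(t_n)\|_{L^2}^2 + \|\partial_t u(t_n)\|_{L^2}^2 \lesssim N(t_n)^{2(1-s_p)} \longrightarrow 0.
\]
The goal is to show that $\|u(t_n)\|_{L^{p+1}}^{p+1}$ also tends to $0$, since then conservation of the energy $E(\vec u(t))$ (which is now legitimate because $\vec u(t) \in \dot H^1 \times L^2$) forces $E(\vec u) \equiv 0$.

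To control the potential part, I would introduce the natural rescaling
\[
v_n(x) := N(t_n)^{-\frac{2}{p-1}}\, u\!\left(t_n,\, \frac{x}{N(t_n)}\right),
\]
under which a direct computation combined with Proposition \ref{prop2} yields
\[
\|v_n\|_{\dot H^1} \;=\; N(t_n)^{s_p - 1}\,\|u(t_n)\|_{\dot H^1} \;\lesssim\; 1.
\]
The subcritical assumption $s_p < 1$ is equivalent to $p+1 < \tfrac{2d}{d-2}$, so the Sobolev embedding $\dot H^1 \hookrightarrow L^{p+1}$ gives $\|v_n\|_{L^{p+1}} \lesssim 1$. Unwinding the scaling,
\[
\|u(t_n)\|_{L^{p+1}}^{p+1} \;=\; N(t_n)^{\frac{2(p+1)}{p-1} - d}\,\|v_n\|_{L^{p+1}}^{p+1},
\]
and subcriticality guarantees $\tfrac{2(p+1)}{p-1} - d = \tfrac{(d+2) - p(d-2)}{p-1} > 0$. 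Hence $\|u(t_n)\|_{L^{p+1}}^{p+1} \to 0$, and together with the kinetic bound we obtain $E(\vec u(t_n)) \to 0$, so $E(\vec u) \equiv 0$ by conservation.

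It remains to pass from $E(\vec u) = 0$ to $u \equiv 0$. In the defocusing case $\mu = 1$ the energy is a sum of three nonnegative quantities, each of which must therefore vanish pointwise in $t$, so $u \equiv 0$ immediately. In the focusing case $\mu = -1$, I would invoke Proposition \ref{prop3}: we have $E(\vec u) = 0 \leq 0$, $\vec u(t) \in (\dot H^{s_p} \times \dot H^{s_p-1}) \cap (\dot H^1 \times L^2)$, and $I_{\max}(u) = (T_-, +\infty)$ is not a bounded interval; Proposition \ref{prop3} then forces $u \equiv 0$.

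The only nonformal step is the second one: verifying that $v_n$ is bounded in a Sobolev space embedding into $L^{p+1}$ and that the scaling exponent $\tfrac{2(p+1)}{p-1} - d$ is strictly positive. Both facts rest crucially on the subcritical regime $s_p < 1$. Everything else is a direct consequence of the gain of regularity in Proposition \ref{prop2}, conservation of energy, and the previously quoted blow-up alternative in Proposition \ref{prop3}.
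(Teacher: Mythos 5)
Your overall strategy matches the paper's: use Proposition~\ref{prop2} to control the $\dot H^1 \times L^2$ norm by $N(t)^{1-s_p}$, deduce that the (now well-defined and conserved) energy is nonpositive, and finish with Proposition~\ref{prop3} in the focusing case. However, there is a concrete error in the step controlling the potential energy. You assert that ``the Sobolev embedding $\dot H^1 \hookrightarrow L^{p+1}$'' applies because $p+1 < \tfrac{2d}{d-2}$, but this is false for the \emph{homogeneous} space: $\dot H^1(\R^d)$ embeds only into the single endpoint $L^{2d/(d-2)}$, not into $L^q$ for any $q < \tfrac{2d}{d-2}$. (Functions like $|x|^{-a}$ with $\tfrac{d-2}{2} < a < \tfrac{d}{p+1}$, suitably smoothed at the origin, are in $\dot H^1$ but not $L^{p+1}$ --- the failure is in the low-frequency/decay tail.) So as written, $\|v_n\|_{\dot H^1} \lesssim 1$ alone does not give $\|v_n\|_{L^{p+1}} \lesssim 1$.

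The fix is available from information you already have but did not use: the rescaled functions $v_n(x) = N(t_n)^{-2/(p-1)} u(t_n, x/N(t_n))$ are exactly the first components of the precompact set $K$ in the definition of the compactness property, so $\|v_n\|_{\dot H^{s_p}} \lesssim 1$ uniformly in $n$. Interpolating between the two bounds $\|v_n\|_{\dot H^{s_p}} \lesssim 1$ and $\|v_n\|_{\dot H^1} \lesssim 1$ gives $\|v_n\|_{\dot H^\sigma} \lesssim 1$ for every $\sigma \in [s_p,1]$; choosing $\sigma = \tfrac{d}{2} - \tfrac{d}{p+1}$, which lies in $[s_p, 1]$ precisely because $s_p \leq 1$, and using the legitimate embedding $\dot H^\sigma \hookrightarrow L^{p+1}$ recovers $\|v_n\|_{L^{p+1}} \lesssim 1$. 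This is, up to the change of variables, exactly the interpolation bound $\|u(t)\|_{L^{p+1}} \lesssim \|\vec u(t)\|_{\energysp}^{1-\theta}\|\vec u(t)\|_{\dot H^1 \times L^2}^{\theta} \lesssim N(t)^{\theta(1-s_p)}$ that the paper uses directly, without ever introducing $v_n$; that route is shorter since one need not track the scaling exponent $\tfrac{2(p+1)}{p-1} - d$ separately. With the embedding step repaired, the remainder of your argument is correct and coincides with the paper's.
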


\begin{proof}
By Proposition \ref{prop2}, we have the bound
\begin{align}
\| \vec u(t) \|_{\dot H^1 \times L^2} \lesssim N(t)^{1-s_p}, \quad t \in I_{\max}(u).  \label{ppn31}
\end{align}
By Sobolev embedding and interpolation, we have for some $\theta \in (0,1)$ 
\begin{align}
\| u(t) \|_{L^{p+1}_x} \lesssim \| \vec u(t) \|_{\energysp}^{1-\theta} \| \vec u(t) \|_{\dot H^1 \times L^2}^{\theta}
\lesssim N(t)^{\theta(1-s_p)}, \quad t \in I_{\max}(u). \label{ppn32}
\end{align}
By \eqref{ppn31} and \eqref{ppn32} the energy $E(\vec u)$ is well defined, conserved, and by our assumption on $N(t)$ it follows
that  
\begin{align}
\vec E(u(0)) = \liminf_{t \rightarrow +\infty} E(\vec u(t)) \lesssim 
\liminf_{t \rightarrow +\infty} \left (\| \vec u(t) \|_{\dot H^1 \times L^2}^2 + \| u(t) \|_{L^{p+1}_x}^{p+1} \right )
= 0
\end{align}
Hence $E(\vec u) \leq 0$.  In the defocusing case $\mu = 1$, this immediately implies that 
$u \equiv 0$.  In the focusing case, this implies by Proposition \ref{prop3} that either $I_{\max}$ is finite or 
$u \equiv 0$.  Since $I_{\max} = (T_-,+\infty)$, we have that $u \equiv 0$ in the focusing case as well. 
\end{proof}

\section{No Soliton--like Solutions Via a Virial Identity}

We now consider a solution $u$ of \eqref{nlw} with the compactness property that is soliton--like, i.e. $N(t) \equiv 1$.  
We first show that in this case, $\vec u(t) \in C(\R; \dot H^{1+s_p} \times \dot H^{s_p})$ with a bound
uniform in $t$.  This implies by interpolation that the trajectory $\{ \vec u(t) : t \in \R\}$ is precompact in $\dot H^1 \times
L^2$.  In the second part of this section, we conclude that $u \equiv 0$.

\subsection{Higher regularity for soliton--like solutions}

In this section, we prove the following.

\begin{ppn}\label{prop4}
Let $u$ be a solution to \eqref{nlw} on $\R$ with the compactness property such that $N(t) \equiv 1$.  Then there exists $\epsilon > 0$ such that for every $t \in \R$,
$\vec u(t) \in \dot H^{1+\epsilon} \times \dot H^{\epsilon}$ and there exists a constant $C > 0$ independent of $t$ such that
\begin{align}
\| \vec u(t) \|_{\dot H^{1+\epsilon} \times \dot H^{\epsilon}} \leq C.
\end{align}
\end{ppn}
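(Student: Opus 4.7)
\medskip

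The plan is to mimic the double Duhamel frequency envelope bootstrap of Lemma \ref{lem7}, but now with base regularity at (or just below) the level of $\dot H^1\times L^2$ provided by Proposition \ref{prop2}. Since $N(t)\equiv 1$, all the scaling factors of the form $N(0)^{\sigma}$ collapse to constants, and by time translation it suffices to produce a bound at $t=0$ that depends only on $\sup_t\|\vec u(t)\|_{\dot H^{s_p}\times\dot H^{s_p-1}}$ and the compactness modulus of the trajectory. Combined with Proposition \ref{prop5}, interpolation gives $\vec u(t)\in\dot H^s\times \dot H^{s-1}$ uniformly in $t$ for every $s\in[s_p,1]$, with a compact trajectory in each such space; these uniform bounds replace the rescaled bounds used in the proofs of Proposition \ref{prop1} and Lemma \ref{lem7}.

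The concrete steps are as follows. First, fix small $\eta>0$ and apply Lemma \ref{lem2} to choose $\delta=\delta(\eta)$ with $\|u\|_{S(J)}<\eta$ on $J=[-\delta,\delta]$. Introduce Littlewood--Paley pieces at frequency $2^k$ and define sequences $a_k$, $a_k(0)$ and frequency envelopes $\alpha_k$, $\alpha_k(0)$ adapted to the regularity $s=1+\epsilon$ for a small $\epsilon>0$ to be chosen. As in Claims \ref{clm1} and \ref{clm1s}, frequency--localized Strichartz estimates together with the fractional chain rule yield $\alpha_k\lesssim \alpha_k(0)+\eta^{p-1}\sum_j 2^{-\beta|j-k|}a_j$, hence $\alpha_k\lesssim \alpha_k(0)$ once $\eta$ is small. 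Second, set $v=u+i(-\Delta)^{-1/2}\partial_t u$ and expand $\la P_k v(0),P_k v(0)\ra_{\dot H^{1+\epsilon}}$ using the double Duhamel formula with the spatial cutoff $\varphi(x/t)$, splitting each Duhamel integral into pieces $A,B$ (and $A',B'$ in the negative time direction) exactly as in the proofs of Propositions \ref{prop2} and \ref{prop5}. Estimate $\la B,B'\ra$ by the kernel bound
\[
|K_k(x-y)|\lesssim_L \frac{2^{(d-2+2(1+\epsilon))k}}{\la 2^k|t-\tau|\ra^L},
\]
paired against $|u|^{p-1}u$ controlled by Sobolev; the computation mirrors that of Lemma \ref{lem7} and produces $\la B,B'\ra\lesssim 2^{-2\epsilon k}c_k^2$ with $\{c_k\}\in\ell^2$. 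Finally, weakly pass to the limits $T_1\to T_-$, $T_2\to+\infty$ by Lemma \ref{lem1}, handle $\la A,A'+B'\ra$ and $\la A+B,A'\ra$ as Cauchy-Schwarz pairings against $P_k v(0)$ to recover the sought frequency envelope inequality, and close the bootstrap by choosing $\eta$ small. This produces $\|\{2^{\epsilon k}\alpha_k(0)\}\|_{\ell^2}\lesssim 1$, which is the claim.

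The main obstacle is the exterior estimate analogous to \eqref{prop12s}. In Lemma \ref{lem7} the bound
\[
\int_{\delta/N(0)}^{T_2}\left\|\frac{e^{it\sqrt{-\Delta}}}{\sqrt{-\Delta}}\bigl(1-\varphi(x/t)\bigr)|u|^{p-1}u\right\|_{\dot H^{s+\sigma}}dt
\]
was obtained by the dual Sobolev embedding $L^q\hookrightarrow \dot H^{s-1+\sigma}$, which required $\sigma\leq 1-s$ and so becomes vacuous for $s=1+\epsilon$. The way around is to exploit the uniform $\dot H^1$ bound from Proposition \ref{prop2} together with the radial pointwise estimate $|u(t,x)|\lesssim |x|^{-(d-2)/2}\|u(t)\|_{\dot H^1}$ valid for $d=4,5$. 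On the support of $1-\varphi(x/t)$ one has $|x|\gtrsim |t|$, so one can trade some of the factors of $|u|$ for $|x|^{-(d-2)/2}\lesssim |t|^{-(d-2)/2}$ and estimate the surviving factors using Sobolev embedding of $\dot H^{s_p}$ into $L^{d(p-1)/2}$ combined with the fractional Leibniz rule to move the $|D_x|^{1-s+\sigma}$ onto $u$ rather than onto $|u|^{p-1}$. Because $s_p<1$ forces $p>(d+2)/(d-2)$, the radial pointwise decay gives a time factor $|t|^{-(1+\epsilon')}$ with $\epsilon'>0$, making the resulting integral converge uniformly in $T_2$ and providing the analogue of \eqref{prop12s} needed at regularity $1+\epsilon$ for all sufficiently small $\epsilon>0$. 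Once this step is in place, the rest of the argument is a direct transcription of the proof of Lemma \ref{lem7}, and Proposition \ref{prop4} follows.
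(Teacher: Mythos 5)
Your proposal follows essentially the same route as the paper's proof: start from the uniform $\dot H^1\times L^2$ bound of Proposition \ref{prop2}, rerun the double Duhamel/frequency envelope machinery of Lemma \ref{lem7}, and supply a new version of the exterior estimate \eqref{prop12s} at a higher level of regularity using the radial pointwise decay $|u(t,x)|\lesssim |x|^{-(d-2)/2}\|u(t)\|_{\dot H^1}$. You have correctly isolated that exterior estimate as the genuinely new ingredient — the paper establishes \eqref{prop41} at $\sigma=2-s_p$ by precisely this device (Leibniz rule, $|\nabla\varphi_R|\lesssim |t|^{-1}$, and the weighted $L^\infty$ bound), then interpolates with the already known range $\sigma\in(0,1-s_p]$.

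There are, however, two points that need repair. First, the parameter justification for integrability in time has the inequality reversed: you write ``Because $s_p<1$ forces $p>(d+2)/(d-2)$,'' but $s_p=\frac d2-\frac{2}{p-1}<1$ is equivalent to $p<\frac{d+2}{d-2}$. The condition you actually need for convergence of $\int_\delta^\infty |t|^{(1-p)(d-2)/2}\,dt$ is $(p-1)(d-2)>2$, i.e.\ $p>\frac{d}{d-2}$, and this follows from $s_p\geq 1/2$ (which gives $p\geq \frac{d+3}{d-1}$, and $\frac{d+3}{d-1}>\frac{d}{d-2}$ for $d=4,5$), not from $s_p<1$. The conclusion is still true, but the reason given is false. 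Second, you propose running the envelope scheme ``adapted to the regularity $s=1+\epsilon$,'' but the envelope sums $\alpha_k(0)=\sum_j 2^{-\frac{\beta}{2}|j-k|}a_j(0)$ built with weights $2^{(1+\epsilon)j}$ need not converge a priori: finiteness would require exactly the $\dot H^{1+\epsilon}\times\dot H^{\epsilon}$ bound you are trying to prove. The paper avoids this circularity by taking $s=1$ (so the a priori input from Proposition \ref{prop2} makes the envelopes well-defined) and then extracting the $\epsilon_p$ gain a posteriori from the weighted bound $\|2^{\epsilon_p k}\alpha_k(0)\|_{\ell^2}\lesssim 1$. Taking this variant of your scheme, the argument closes; as written it does not quite.
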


\begin{proof}[Proof of Proposition \ref{prop4}]
The proof is essentially the same as the proof of Proposition \ref{prop1}.  It is simpler now since
$N(t) \equiv 1$ and we have the a priori bound (from 
Proposition \ref{prop2})
\begin{align*}
\sup_{t \in \R} \| \vec u(t) \|_{\dot H^s \times \dot H^{s-1}} \lesssim 1, \quad s \in [s_p, 1].
\end{align*}
Let $a_k, a_k(0), \alpha_k$ and $\alpha_k(0)$ be as in Claim \ref{clm1s}
with $s = 1$, and let $\delta$ be as in Claim \ref{clm1s}.  We recall the estimate
\begin{align}
2^{k} \left \| 
P_k \int_0^{\delta} \frac{e^{-it\sqrt{\Delta}}}{\sqrt{-\Delta}} |u(t)|^{p-1} u(t) dt
\right \|_{L^2_x} 
\lesssim \eta^{p-1} \sum_j 2^{-\beta |j - k|} a_j. \label{prop43}
\end{align}
Let $\varphi \in C^\infty_0(\R^d)$ such that $\varphi(x) = 1$ for $|x| \leq 1/8$ and $\varphi = 0$ for $|x| \geq 1/4$. 
We claim that given $\sigma \in (0, 2 - s_p]$, we have the estimate
\begin{align}
\int_{\delta}^{T_2} 
\left \| \frac{e^{it\sqrt{-\Delta}}}{\sqrt{-\Delta}}
\left ( 1 - \varphi \left (\frac{x}{t} \right ) \right ) |u(t)|^{p-1} u(t) dt
\right \|_{\dot H^{s_p + \sigma}} \lesssim 1, \label{prop41}
\end{align}
where the implied constant is uniform in $T_2 \in (0,T_+)$.  Indeed, the case $\sigma \in (0, 1- s_p]$ was covered 
in the proof of Proposition \ref{prop1}.  For $\sigma = 2 - s_p$, we first recall the estimate \eqref{prop42}
\begin{align*}
\| |u(t)|^{p-1} u(t) \|_{L^2(|x| \simeq |t|)} \lesssim |t|^{s_p-2},
\end{align*}
This estimate and the radial Sobolev embedding $\| |x|^{(d-2)/2} u \|_{L^\infty_x} \lesssim \| u \|_{\dot H^1}$
yield
\begin{align*}
\left \| \left ( 1 - \varphi \left ( \frac{x}{t} \right) \right )
 |u(t)|^{p-1}u(t) \right \|_{\dot H^1} 
\lesssim& \left \| \nabla \left ( 1 - \varphi \left ( \frac{x}{t} \right) \right )
 |u(t)|^{p-1} u(t) \right \|_{L^2}  \\
&+ \left \| \left ( 1 - \varphi \left ( \frac{x}{t} \right) \right )
 |u(t)|^{p-1} \nabla u(t) \right \|_{L^2} \\
\lesssim& |t|^{s_p- 3} 
+ |t|^{(1-p)(d-2)/2} \left \|
 ||x|^{(d-2)/2} u(t)|^{p-1} \nabla u(t) \right \|_{L^2}  \\
\lesssim& |t|^{s_p - 3} + |t|^{(1-p)(d-2)/2}. 
\end{align*}

Hence
\begin{align}
\int_{\delta}^{T_2} \left \| \frac{e^{it\sqrt{-\Delta}}}{\sqrt{-\Delta}} \left ( 1 - \varphi \left ( \frac{x}{t} \right) \right )
 |u(t)|^{p-1}u(t) \right \|_{\dot H^{2}} dt \lesssim 
\int_\delta^{+\infty} |t|^{s_p - 3} + |t|^{(1-p)(d-2)/2} dt \lesssim 1,
\end{align}
uniformly in $T_2$.  Interpolating this estimate with the known estimate for $\sigma \in (0,1-s_p]$, we obtain 
\eqref{prop41}.

Define $v$ as in the proof of Proposition \ref{prop2} and Proposition \ref{prop1}.  We now estimate $\| P_k v(0) \|_{\dot H^{s_p}}$.  By the Duhamel formula,
for any $T_- < T_1 < 0 < T_2 < T_+$, 
\begin{align*}
\la P_k v(0) &, P_k v(0) \ra_{\dot H^1} \\
=& \left \la P_k \left ( e^{iT_2 \sqrt{-\Delta}} v(T_2) + i \mu \int_0^{T_2} \frac{e^{it \sqrt{-\Delta}}}{\sqrt{-\Delta}} |u(t)|^{p-1}u(t) dt
\right ), \right. \\
& \left. P_k \left (  e^{iT_1 \sqrt{-\Delta}} v(T_1) - i \mu \int^0_{T_1} \frac{e^{i\tau \sqrt{-\Delta}}}{
\sqrt{-\Delta}} |u(\tau)|^{p-1}u(\tau) d\tau
\right )  \right \ra_{\dot H^{1}}.
\end{align*}
As in the proofs of Proposition \ref{prop2} and Proposition \ref{prop1}, we can conclude via Lemma \ref{lem1} that 
\begin{align*}
\la& P_k v(0), P_k v(0) \ra_{\dot H^{1}} = \\
&\lim_{T_1 \rightarrow T_-} \lim_{T_2 \rightarrow T_+}
\left \la 
P_k  \left ( \int_0^{T_2} \frac{e^{it \sqrt{-\Delta}}}{\sqrt{-\Delta}} |u(t)|^{p-1}u(t) dt \right ), 
P_k  \left ( \int^0_{T_1} \frac{e^{i\tau \sqrt{-\Delta}}}{\sqrt{-\Delta}} |u(\tau)|^{p-1}u(\tau) d\tau \right )
\right \ra_{\dot H^{1}}.
\end{align*}
We note that since we are localizing to frequencies at $2^k$, we are still able to apply Lemma \ref{lem1} to the $\dot H^1$ pairing rather than an $\dot H^{s_p}$ pairing. 
We split the Duhamel integrals into pieces and write the previous $\dot H^{s_p}$ pairing as a sum of pairings 
\begin{align*}
\la A + B, A' \ra + \la A , A' + B' \ra + \la B, B'\ra - \la A , A'\ra,
\end{align*}
where 
\begin{align}
A &:= P_k \left ( \int_0^{\delta} \frac{e^{it \sqrt{-\Delta}}}{\sqrt{-\Delta}} |u(t)|^{p-1}u(t) dt +
\int_{\delta}^{T_2} \frac{e^{it \sqrt{-\Delta}}}{\sqrt{-\Delta}} \left ( 1 - \varphi \left ( \frac{x}{t} \right) \right )
 |u(t)|^{p-1}u(t) dt \right ), \\
B &:= P_k \left (\int_{\delta}^{T_2} \frac{e^{it \sqrt{-\Delta}}}{\sqrt{-\Delta}} \varphi \left ( \frac{x}{t} \right ) |u(t)|^{p-1}u(t) dt \right ),
\end{align}
and $A',B'$ are the analogous integrals in the negative time direction.  

We now estimate $A$.  By \eqref{prop43} and \eqref{prop41} we have the estimate (with $\beta$ defined in Claim \ref{clm1})
\begin{align*}
\| A \|_{\dot H^{1}} \lesssim 
\eta^{p-1} \sum_j 2^{-\beta |j - k|} a_j + 2^{-\epsilon_p k} b_k
\end{align*}
where $b_k = 2^{-k(\sigma(k) - \epsilon_p)}$, and we are free to choose 
$\sigma(k) \in [0,1-s_p]$ for each $k$. We choose 
\begin{align*}
\sigma(k) = 
\begin{cases}
1 - s_p, &\mbox{if } k \geq 0, \\
0, &\mbox{if } k < 0,
\end{cases}
\end{align*}
so that 
\begin{align*}
b_k = 
\begin{cases}
2^{-k(1-s_p-\epsilon_p)} &\mbox{ if } k \geq 0, \\
2^{\epsilon k} &\mbox{ if } k < 0.
\end{cases}
\end{align*}
Since $\epsilon_p < 1 - s_p$, we see that $\{ b_k \} \in \ell^2$ and 
\begin{align}
\| \{ b_k \} \|_{\ell^2} \lesssim 1.
\end{align}
In summary, we have the estimate
\begin{align*}
\| A \|_{\dot H^{1}} \lesssim \eta^{p-1} \sum_j 2^{-\beta |j - k|} a_j + 2^{-\epsilon_p k} b_k, \quad 
\| \{b_k\}_k \|_{\ell^2} \lesssim 1.
\end{align*}
The same estimate holds with $A$ replaced by $A'$. This implies that 
\begin{align}
\la A , A' \ra \lesssim \eta^{2(p-1)} \left ( \sum_j 2^{-\beta |j - k|} a_j \right )^2+ 
2^{-2\epsilon_p k} b_k^2. \label{prop45}
\end{align}
 
We now estimate $\la A, A'+ B' \ra$.  By Lemma \ref{lem1}, 
\begin{align*}
- \mu i P_k \int_{T_1}^0 \frac{e^{i \tau \sqrt{-\Delta}}}{\sqrt{-\Delta}} |u(\tau)|^{p-1} u(\tau) d\tau \rightharpoonup P_k v(0)
\end{align*}
in $\dot H^{1}$ as $T_1 \rightarrow T_-$. As in the proof of Proposition \ref{prop2}, we have that 
\begin{align}
\left |\lim_{T_1 \rightarrow T_-} \lim_{T_2 \rightarrow T_+}
\la A , A' + B' \ra \right | 
&= \left |\lim_{T_1 \rightarrow T_-} \lim_{T_2 \rightarrow T_+} \left \la A , P_k v(0) \right \ra \right | \nonumber \\
&\lesssim a_k(0) \left ( 
\eta^{p-1} \sum_j 2^{-\beta |j - k|} a_j + 2^{-\epsilon_p k} b_k
\right ). \label{prop46}
\end{align}
By the same proof, the same estimate holds for $\la A + B , A' \ra$. 

Finally, we estimate the pairing $\la B, B' \ra$ which we write as 
\begin{align*}
\la &B , B' \ra \\ &= \int_{\delta}^{T_2} \int^{-\delta}_{T_1}
\left \la \varphi \left ( \frac{x}{|t|} \right ) |u(t)|^{p-1}u(t), P_k^2 e^{i(\tau - t)\sqrt{-\Delta}}
\varphi \left ( \frac{y}{|\tau|} \right ) |u(\tau)|^{p-1}u(\tau) \right \ra_{L^2} d\tau dt.  
\end{align*}
The operator $P_k^2 e^{i(\tau - t)\sqrt{-\Delta}}$ has a kernel given by 
\begin{align}
K_k ( x ) = K_k (|x|) = c \int_0^{\pi} \int_0^{+\infty} e^{i|x|\rho \cos \theta} e^{i(t- \tau)\rho} 
\psi^2 \left ( \frac{\rho}{2^k}\right ) \rho^{d-1} d \rho \sin^{d-2} \theta d\theta,
\end{align}
where  $\psi \in C^{\infty}_0$ is the Littlewood--Paley multiplier with $\supp \psi \subseteq
[1/2,2]$. As in the proof of Proposition \ref{prop1}, we have the estimate 
\begin{align}
|K_k(x - y)| \lesssim_L \frac{2^{dk}}{\la 2^k |\tau - t|\ra^L},
\end{align}
for every $L \geq 1$, which implies (as in the proof of Proposition \ref{prop1})
\begin{align*}
\la B, B' \ra \lesssim_L 
\begin{cases}
2^{(d - L)k}, &\mbox{if } k \geq 0, \\
1, &\mbox{if } k < 0,
\end{cases} 
\end{align*}
for all $L$ sufficiently large. Fixing $L$ sufficiently large, we obtain the estimate
\begin{align}
\la B, B' \ra \lesssim 
\begin{cases}
2^{-4k}, &\mbox{if } k \geq 0, \\
1, &\mbox{if } k < 0.
\end{cases} \label{prop44}
\end{align}

Combining \eqref{prop45}, \eqref{prop46}, and \eqref{prop44}, we obtain the estimate 

\begin{align*}
a^2_k(0) \lesssim& a_k(0) \left ( \eta^{p-1} \sum_j 2^{-\beta |j - k|} a_j + 2^{- \epsilon_p k} b_k \right )
+ \eta^{2(p-1)} \left ( \sum_j 2^{-\beta |j - k|} a_j \right )^2 \\ &+ 2^{-2 \epsilon_p k} b_k^2
+  \min \left ( 2^{-4k}, 1\right ).
\end{align*}
Hence, we have 
\begin{align*}
a_k(0) \lesssim& \eta^{p-1} \sum_j 2^{-\beta |j - k|} a_j + 2^{- \epsilon k} b_k +
\min \left ( 2^{-2k}, 1\right ).
\end{align*}
Using the definitions of $\alpha_k$ and $\alpha_k(0)$ and Claim \ref{clm1} we obtain
\begin{align*}
\alpha_k(0) &\lesssim \eta^{p-1} \alpha_k + \sum_j 2^{-\frac{\beta}{2}|j-k|} 2^{-\epsilon_p j} b_j 
+ \sum_j 2^{-\frac{\beta}{2}|j-k|} 2^{- j} c_j \\ &\lesssim
\eta^{p-1} \alpha_k(0) + \sum_j 2^{-\frac{\beta}{2}|j-k|} 2^{-\epsilon_p j} b_j 
+ \sum_j 2^{-\frac{\beta}{2}|j-k|} 2^{- \epsilon_p j} c_j
\end{align*}
where 
\begin{align*}
c_j := 
\begin{cases}
2^{-k(1-\epsilon_p)} &\mbox{ if } k \geq 0, \\
2^{\epsilon k} &\mbox{ if } k < 0.
\end{cases}
\end{align*}
By choosing $\eta$ sufficiently 
small, we have that 
\begin{align*}
\alpha_k(0) \lesssim \sum_j 2^{-\frac{\beta}{2}|j-k|} 2^{- \epsilon j} b_j +
 \sum_j 2^{-\frac{\beta}{2}|j-k|} 2^{-\epsilon j} c_j.
\end{align*}
Since $\epsilon_p \leq \beta/4$ we see by Minkowski's inequality that 
\begin{align*}
\| 2^{\epsilon_p k} \alpha_k(0) \|_{\ell^2} \lesssim \| \{b_k\} \|_{\ell^2} + 
\| \{c_k\} \|_{\ell^2} \lesssim 1.
\end{align*}
This finishes the proof of Proposition \ref{prop4}.
\end{proof}

\subsection{Rigidity argument for the soliton--like case}

We now prove the following rigidity result using a simple virial argument. 

\begin{ppn}\label{sol}
Let $u \in C(\R ; (\dot H^{s_p} \times \dot H^{s_p-1}) \cap (\dot H^1 \times L^2))$ be a global solution to \eqref{nlw} such that
the trajectory
\begin{align*}
\{ \vec u(t) : t \in \R \}
\end{align*}
is precompact in $(\dot H^{s_p} \times \dot H^{s_p-1}) \cap (\dot H^1 \times L^2)$.  Then $u = 0$.
\end{ppn}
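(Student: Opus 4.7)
The plan is to apply a truncated virial argument in the spirit of \cite{dkm1}, using two localized virials together with the conservation of mass-type and scaling-type identities. Since $\{\vec u(t)\}$ is precompact in $(\energysp) \cap (\dot H^1 \times L^2)$ and the subcritical condition $s_p < 1$ yields $p+1 < 2d/(d-2)$, the Sobolev embedding $\dot H^1 \hookrightarrow L^{p+1}$ on bounded sets combined with precompactness gives the key tightness property: for every $\eta > 0$, there exists $R(\eta)$ so that
\begin{align*}
\sup_{t \in \R} \int_{|x| > R(\eta)} \bigl( |\nabla u|^2 + |\partial_t u|^2 + |u|^{p+1} \bigr) \, dx < \eta.
\end{align*}

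Fix a radial $\phi \in C^\infty_c(\R^d)$ with $\phi \equiv 1$ on $|x| \leq 1$ and $\phi \equiv 0$ on $|x| \geq 2$, and set $\phi_R(x) = \phi(x/R)$. I will work with the two truncated virials
\begin{align*}
V_R(t) := \int \phi_R \, u \, \partial_t u \, dx, \qquad Z_R(t) := \int \phi_R \, \partial_t u \, \bigl( x \cdot \nabla u + \tfrac{d}{2} u \bigr) dx.
\end{align*}
Cauchy--Schwarz together with the local bound $\| u \|_{L^2(|x| \leq 2R)} \lesssim R \| u \|_{\dot H^1}$ (from $L^{2d/(d-2)}$ Sobolev on balls) shows $|V_R(t)| + |Z_R(t)| \leq C(R)$ uniformly in $t$. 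Using the equation and integration by parts, a standard computation produces
\begin{align*}
V_R'(t) &= \int |\partial_t u|^2 \, dx - \int |\nabla u|^2 \, dx - \mu \int |u|^{p+1} \, dx + \mathcal{E}^V_R(t), \\
Z_R'(t) &= -\int |\nabla u|^2 \, dx - \mu \, \frac{d(p-1)}{2(p+1)} \int |u|^{p+1} \, dx + \mathcal{E}^Z_R(t),
\end{align*}
where the remainders $\mathcal{E}^V_R, \mathcal{E}^Z_R$ arise entirely from derivatives of $\phi_R$, which are supported on $|x| \sim R$; by tightness they satisfy $\sup_t \bigl( |\mathcal{E}^V_R(t)| + |\mathcal{E}^Z_R(t)| \bigr) \to 0$ as $R \to +\infty$.

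Integrating over $[0, T]$ and using $|V_R|, |Z_R| \leq C(R)$ yields $\bigl| T^{-1} \int_0^T V_R' \, dt \bigr| \leq 2C(R)/T$, and similarly for $Z_R$. Sending first $T \to +\infty$ and then $R \to +\infty$, the Cesaro averages $\overline{f} := \limsup_{T \to \infty} T^{-1} \int_0^T f(t) \, dt$ must satisfy
\begin{align*}
\overline{\textstyle \int |\partial_t u|^2} - \overline{\textstyle \int |\nabla u|^2} - \mu \, \overline{\textstyle \int |u|^{p+1}} = 0, \qquad \overline{\textstyle \int |\nabla u|^2} + \mu \, \frac{d(p-1)}{2(p+1)} \, \overline{\textstyle \int |u|^{p+1}} = 0.
\end{align*}
In the defocusing case $\mu = 1$ the second identity immediately forces $\overline{\int |\nabla u|^2} = \overline{\int |u|^{p+1}} = 0$, and the first then gives $\overline{\int |\partial_t u|^2} = 0$. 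In the focusing case $\mu = -1$, eliminating $\overline{\int |\nabla u|^2}$ from the two identities produces
\begin{align*}
\overline{\textstyle \int |\partial_t u|^2} = \frac{(d-2)p - (d+2)}{2(p+1)} \, \overline{\textstyle \int |u|^{p+1}};
\end{align*}
the subcritical hypothesis $p < (d+2)/(d-2)$ makes the coefficient strictly negative, so nonnegativity of the left side forces $\overline{\int |u|^{p+1}} = 0$ and hence all three averages vanish.

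In either case there exists a sequence $t_n \to +\infty$ along which $\| \vec u(t_n) \|_{\dot H^1 \times L^2} \to 0$. Extracting a convergent subsequence using precompactness in $\energysp$ and noting that the strong limit must agree with the distributional limit $0$ upgrades this to $\| \vec u(t_n) \|_{\energysp} \to 0$; the small data theory then yields $\| u \|_{S(\R)} \lesssim \| \vec u(t_n) \|_{\energysp} \to 0$, forcing $u \equiv 0$. The main technical obstacle is justifying the two virial derivative identities above together with uniform-in-$t$ smallness of the cutoff errors $\mathcal{E}^V_R, \mathcal{E}^Z_R$, which requires careful use of tightness and the local bound $\| u \|_{L^2(|x| \leq 2R)} \lesssim R \| u \|_{\dot H^1}$ to absorb every boundary contribution from $\nabla \phi_R$; the algebraic closure in the focusing case is where the subcritical assumption $s_p < 1$ is used in an essential way.
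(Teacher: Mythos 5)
Your argument runs along essentially the same truncated-virial path as the paper's proof of Proposition \ref{sol}: the quantity $\rho(R)$ implicit in your tightness statement, the pair of localized virial identities, the uniform-in-$t$ bound $|V_R|+|Z_R| \lesssim R$, and the Cesaro averaging with $R\to\infty$ are all the same ingredients. The only genuine algebraic difference is packaging: the paper takes a single linear combination $y_R = c\int\varphi_R u\,\partial_t u + \int\varphi_R\,x\cdot\nabla u\,\partial_t u$ with $c = \tfrac{d-2}{2}$ (focusing) or $c = \tfrac{d-1}{2}$ (defocusing) chosen so that $y_R'$ is directly a nonpositive sum plus error, then integrates once with $R=\sqrt T$; you keep $V_R$ and $Z_R$ separate and solve the resulting $2\times2$ system in the time averages. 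Both routes use $s_p<1$, i.e.\ $p<\tfrac{d+2}{d-2}$, in exactly the same way in the focusing case; as a small side benefit, your choice $Z_R = \int\varphi_R\,\partial_t u\,(x\cdot\nabla u+\tfrac d2 u)$ kills the $\int|\partial_t u|^2$ term, so your defocusing argument does not invoke $s_p\geq\tfrac12$, whereas the paper's bookkeeping via $E(\vec u)$ does. Your endgame in the focusing case is also a bit more direct than the paper's (you get $\|\vec u(t_n)\|_{\dot H^1\times L^2}\to0$ outright, rather than extracting a profile and running Fatou against the local theory).

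Two points need repair before this is a proof. First, you define $\overline f = \limsup_T T^{-1}\int_0^T f\,dt$ and then manipulate $\overline{\int|\partial_t u|^2}$, $\overline{\int|\nabla u|^2}$, $\overline{\int|u|^{p+1}}$ as if $\overline{\,\cdot\,}$ were linear when you form and solve the ``two identities''; $\limsup$ is only subadditive, so the elimination step as written is invalid. The cure is standard: all three integrals are bounded in $t$ by compactness, so pass to a subsequence $T_k\to\infty$ along which each Cesaro mean converges to some $P,G,U\geq0$, and run the algebra on these genuine limits; alternatively, adopt the paper's single-$y_R$ device, which never requires splitting a $\limsup$. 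Second, you attribute the smallness of the cutoff errors $\mathcal E^V_R,\mathcal E^Z_R$ to the local bound $\|u\|_{L^2(|x|\leq2R)}\lesssim R\|u\|_{\dot H^1}$, but that bound is only what gives $|V_R|+|Z_R|\lesssim R$. The error terms (e.g.\ $\int u\,\nabla\varphi_R\cdot\nabla u$) live on $|x|\sim R$ and need $\sup_t\int_{|x|>R}|u(t)|^2/|x|^2\to0$ as $R\to\infty$; this follows from precompactness of $\{u(t)\}$ in $\dot H^1$ together with Hardy's inequality, and the paper builds it in by including $|u|^2/|x|^2$ in the definition of $\rho(R)$. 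You should state the Hardy-weighted tightness explicitly rather than leaning on the local $L^2$ bound, which does not decay in $R$.
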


\begin{proof}

Let $\varphi \in C^{\infty}_0(\R^d)$ be radial with $\varphi = 1$ for $|x| \leq 1$ and $\varphi = 0$ if $|x| \geq 2$.  For $R > 0$,
denote $\varphi_R = \varphi(\cdot / R)$.  We define
\begin{align}
\rho(R) = \sup_{t \in \R} \int_{|x| \geq R} |\nabla u(t)|^2 + |\partial_t u(t)|^2 + \frac{|u(t)|^2}{|x|^2} + |u(t)|^{p+1} dx.
\end{align}
Since $\{ \vec u(t) : t \in \R \}$ is precompact in $(\dot H^{s_p} \times \dot H^{s_p-1}) \cap (\dot H^1 \times L^2)$, we have by
Sobolev embedding and Hardy's inequality that $\rho(R) < +\infty$ and
$$
\lim_{R \rightarrow +\infty} \rho(R) = 0.
$$
Using \eqref{nlw} and integration by parts, it is standard to verify the two virial identities
\begin{align}
\frac{d}{dt} \int \varphi_R u(t) \partial_t u(t) dx =&
\int |\partial_t u(t)|^2 dx - \int |\nabla u(t)|^2 dx - \mu \int |u(t)|^{p+1} dx \label{vir1} \\ &+ O(\rho(R)), \nonumber\\
\frac{d}{dt} \int \varphi_R x \cdot \nabla u(t) \partial_t u(t) dx =&
- \frac{d}{2} \int |\partial_t u(t)|^2 dx + \frac{d-2}{2} \int |\nabla u(t)|^2 dx \label{vir2}\\ &+ \mu \frac{d}{p+1} \int |u(t)|^{p+1} dx
 + O(\rho(R)), \nonumber
\end{align}
where the big--oh terms are uniform in $t$.  Indeed, for \eqref{vir1} we have 
\begin{align*}
\frac{d}{dt} \int \varphi_R u(t) \partial_t u(t) dx =& 
\int \varphi_R | \partial_t u(t)|^2 dx + \int \varphi_R u(t) \partial^2_t u(t) dx \\
=& \int \varphi_R | \partial_t u(t)|^2 dx + \int \varphi_R u(t) \Delta u(t) dx + - \mu\int \varphi_R | u(t) |^{p+1} dx \\
=& \int \varphi_R | \partial_t u(t)|^2 dx - \int \varphi_R |\nabla u(t)|^2 dx - \int u(t) \nabla \varphi_R \cdot \nabla u(t) dx \\
&- \mu \int \varphi_R | u(t) |^{p+1} dx \\
=& \int |\partial_t u(t)|^2 dx - \int |\nabla u(t)|^2 dx - \mu \int |u(t)|^{p+1} dx + A_R(t)
\end{align*}
where 
\begin{align*}
A_R(t) := \int (\varphi_R - 1) \left ( |\partial_t u(t)|^2 - |\nabla u(t) |^2 - \mu |u(t)|^{p+1} \right ) dx 
- \int u(t) \nabla \varphi_R \cdot \nabla u(t) dx.
\end{align*}
The first integral in the definition of $A_R(t)$ is easily seen to be 
$O(\rho(R))$.  Since $|\nabla \varphi_R(x)| \lesssim |x|^{-1} \chi_{\{|x| \simeq R\}}$, we have by Cauchy-Schwarz and Hardy's inequality
\begin{align*}
\int |u(t)| |\nabla \varphi_R | |\nabla u(t)| dx \leq C \rho(R).
\end{align*}
Thus, $|A_R(t)| \leq C \rho(R)$ uniformly in $t$.  The calculation for 
\eqref{vir2} is similar and is omitted. 

We treat the cases $\mu = -1$ and $\mu = 1$ separately.

\subsubsection*{Case 1: $\mu = -1$}
Let
\begin{align}
y_R(t) = \frac{d-2}{2} \int \varphi_R u(t) \partial_t u(t) dx + \int \varphi_R x \cdot \nabla u(t) \partial_t u(t) dx.
\end{align}
Note that by Hardy's inequality and the compactness of the trajectory, we have the uniform in $t$ bound
\begin{align}
|y_R(t)| &\leq \frac{d-2}{2} \int |\varphi_R ||u(t)||\partial_t u(t)| dx + \int |\varphi_R x| |\nabla u(t)| |\partial_t u(t)| dx \nonumber \\
&\leq C R \| \vec u(t) \|_{\dot H \times L^2}^2 \nonumber \\
&\leq C R. \label{sol1}
\end{align}
By the virial identities \eqref{vir1} and \eqref{vir2}, we have that
$$
y_R'(t) = -\int |\partial_t u(t)|^2 dx - d \left (\frac{1}{p+1} - \frac{d-2}{2d} \right ) \int |u(t)|^{p+1} dx
+ O(\rho(R)).
$$
where the big--oh term is uniform in $t$.  Since $s_p < 1$, we have that $p < \frac{d+2}{d-2}$ so $\frac{1}{p+1} - \frac{d-2}{2d} > 0$
and 
$$
\int |u(t)|^{p+1} dx \leq C \left ( -y_R'(t) + O(\rho(R)) \right ).
$$
Integrating the previous expression from $0$ to $T$ and dividing by $T$ we have
\begin{align*}
\frac{1}{T} \int_0^T \int |u(t)|^{p+1} dx dt &\leq C \frac{1}{T} |y_R(T) - y_R(0)| + C \rho(R) \\
&\leq  C \frac{R}{T} + C \rho(R).
\end{align*}
Setting $R = \sqrt{T}$, we obtain
\begin{align}
\lim_{T \rightarrow +\infty} \frac{1}{T} \int_0^T \int | u(t)|^{p+1} dx dt = 0. \label{sol2}
\end{align}

We claim that there exists a sequence of natural numbers $\{k_n\}_n$ such that
\begin{align}
\lim_{n \rightarrow \infty} \int_{k_n}^{k_n + 1}  \int |u(t)|^{p+1} dx dt = 0. \label{sol3}
\end{align}
Suppose not. Then there exists $\epsilon > 0$ so that for all $k \geq 0$
\begin{align*}
\int_{k}^{k+1} \int |u(t)|^{p+1} dx dt \geq \epsilon.
\end{align*}
Summing the previous expression over all $k$ from $0$ to $N-1$ implies
\begin{align*}
\int_{0}^{N} \int |u(t)|^{p+1}dx dt \geq N \epsilon, \quad \forall N \geq 1,
\end{align*}
which contradicts \eqref{sol2}.  This proves \eqref{sol3}.

Denote $t_n = k_n$ and write \eqref{sol3} as
\begin{align}
\lim_{n \rightarrow \infty} \int_0^1 \int |u(t_n + t)|^{p+1} dx dt = 0. \label{sol4}
\end{align}
By compactness of the trajectory, there exists $(U_0,U_1) \in (\dot H^{s_p} \times \dot H^{s_p-1}) \cap (\dot H^1 \times L^2)$
such that
$$
(u(t_n), \partial_t u(t_n)) \rightarrow (U_0,U_1) \quad \mbox{in } (\dot H^{s_p} \times \dot H^{s_p-1}) \cap (\dot H^1 \times L^2).
$$
Let $U(t)$ be the solution to \eqref{nlw} with initial data $(U_0,U_1)$.  By the local theory for \eqref{nlw}, for $0 < t_0 < 1$ sufficiently small,
$$
\lim_{n \rightarrow \infty} \sup_{t \in [0,t_0]} \| \vec u(t_n + t) - \vec U(t) \|_{\energysp} = 0.
$$
This fact, Fatou's lemma, and \eqref{sol4} imply that
$$
\int_0^{t_0} \int | U(t)|^{p+1} dx dt = 0,
$$
Thus, $U = 0$ so that
$$
\lim_{t \rightarrow +\infty} \| \vec u(t) \|_{(\energysp) \cap (\dot H^1 \times L^2)} = 0.
$$
By the small data theory $u \equiv 0$.

\subsubsection*{Case 2: $\mu = 1$}  Similar to the focusing case, we define
\begin{align}
y_R(t) = \frac{d-1}{2} \int \varphi_R u(t) \partial_t u(t) dx + \int \varphi_R x \cdot \nabla u(t) \partial_t u(t) dx,
\end{align}
and note that
\begin{align*}
|y_R(t)| \leq C R
\end{align*}
for all $t \in \R$.  Using the virial identities we have that
\begin{align*}
y_R'(t) =& -\frac{1}{2} \int |\partial_t u(t)|^2 dx - \frac{1}{2} \int |\nabla u(t)|^2 dx \\ &-\left ( \frac{d-1}{2}
- \frac{d}{p+1} \right )\int |u(t)|^{p+1} dx
+ O(\rho(R)) \\
=& - E(\vec u(t)) - \left (  \frac{d-1}{2} - \frac{d+1}{p+1} \right ) \int |u(t)|^{p+1} dx + O(\rho(R)).
\end{align*}
The assumption $s_p \geq 1/2$ implies that $\frac{d-1}{2} - \frac{d+1}{p+1} \geq 0$.  Hence
\begin{align}
E(\vec u(t)) \leq - y'_R(t) + O(\rho(R)).
\end{align}
Since $\mu = 1$, we have by conservation of energy that
$$
\| \vec u(0) \|_{\dot H^1 \times L^2}^2 \leq 2 E(\vec u(0)) = 2 E(\vec u(t)) \leq - 2y'_R(t) + O(\rho(R)).
$$
Integrating the previous expression from $0$ to $T$, dividing by $T$, and choosing $R = \sqrt{T}$, we obtain
\begin{align*}
\| \vec u(0) \|_{\dot H^1 \times L^2}^2 \leq C \left ( \frac{1}{\sqrt{T}} + \rho(\sqrt{T}) \right ) \rightarrow 0
\end{align*}
as $T \rightarrow +\infty$.  Hence $\| \vec u(0) \|_{\dot H^1 \times L^2} = 0$ so that $u = 0$.
\end{proof}

\begin{proof}[Proof of Theorem \ref{thm1}]
If the conclusion of Theorem \ref{thm1} was false, then by Proposition \ref{contra} there exists a nonzero solution $u$ to 
\eqref{nlw} with the compactness property.  By Proposition \ref{prop2} and Proposition \ref{noenergycas}
we see that the scaling parameter must satisfy $N(t) \equiv 1$. But by Proposition \ref{prop4} and 
Proposition \ref{sol}, we have that $u = 0$, which is a contradiction.  Thus, the conclusion of Theorem \ref{thm1} must hold. 
\end{proof}


\begin{thebibliography}{9}

\bibitem{bahger} Bahouri, Hajer; G\'erard, Patrick. High frequency approximation of solutions to critical nonlinear wave equations. \emph{Amer. J. Math.} 121 (1999),
no. 1, 131--175.

\bibitem{bul1} Bulut, Aynur. The defocusing cubic nonlinear wave equation in the energy-supercritical regime. \emph{Recent advances in harmonic analysis and partial differential equations}, 1--11, 
\emph{Contemp. Math.}, 581, Amer. Math. Soc., Providence, RI, 2012.

\bibitem{bul2} Bulut, Aynur. Global well-posedness and scattering for the defocusing energy-supercritical cubic nonlinear wave equation. 
\emph{J. Funct. Anal.} 263 (2012), no. 6, 1609--1660.
 
\bibitem{ckls} C\^ote, Rapha\"el; Kenig, Carlos E.; Lawrie, Andrew; Schlag, Wilhelm. Profiles for the radial focusing 4d energy--criticial wave equation. \emph{Preprint}, 02 2014.

\bibitem{cks} C\^ote, Rapha\"el; Kenig, Carlos E.; Schlag, Wilhelm. Energy partition for the linear radial wave equation. \emph{Math. Ann.} 358 (2014), no. 3-4, 573--607.

\bibitem{dodl1} Dodson, Benjamin; Lawrie, Andrew. Scattering for the radial 3D cubic wave equation. \emph{Anal. PDE} 8 (2015), no. 2, 467--497.

\bibitem{dodl2} Dodson, Benjamin; Lawrie, Andrew. Scattering for radial, semi--linear, super--critical wave equations with bounded critical norm. \emph{Arch. Ration. Mech. Anal.} 218 (2015), no. 3, 1459--1529.

\bibitem{dkm1} Duyckaerts, Thomas; Kenig, Carlos; Merle, Frank. Universality of blow--up profile for small radial type II blow--up solutions of the energy--critical wave equation. \emph{J. Eur. Math. Soc.} (JEMS) 13 (2011), no. 3, 533--599.

\bibitem{dkm4} Duyckaerts, Thomas; Kenig, Carlos; Merle, Frank. Classification of radial solutions to of the focusing, energy critical wave equation. \emph{Cambridge Journal Mathematics} 1 (2013), no. 1, 74--144.

\bibitem{dkm5} Duyckaerts, Thomas; Kenig, Carlos; Merle, Frank. Scattering for radial, bounded solutions of focusing supercritical wave equations. \emph{Int. Math. Res. Not. IMRN} 2014, no. 1, 224--258.

\bibitem{dkm6} Duyckaerts, Thomas; Kenig, Carlos; Merle, Frank. Profiles for bounded solutions of dispersive equations, with applications to energy-critical wave and Schr\"odinger equations.
    \emph{Preprint}, 03 2014.

\bibitem{dkm7} Duyckaerts, Thomas; Kenig, Carlos; Merle, Frank. Global existence for solutions of the focusing wave equation with the compactness property.
    \emph{Preprint}, 04 2015.

\bibitem{fanelli} Fanelli, Luca; Visciglia, Nicola. The lack of compactness in the Sobolev-Strichartz inequalities.
\emph{J. Math. Pures Appl.} (9) 99 (2013), no. 3, 309--320.

\bibitem{gin} Ginibre, J.; Velo, G. Generalized Strichartz inequalities for the wave equation. 
\emph{Partial differential operators and mathematical physics} (Holzhau, 1994), 153--160, \emph{Oper. Theory Adv. Appl.}, 78, Birkhäuser, Basel, 1995.

\bibitem{glass} Glassey, R. T. On the blowing up of solutions to the Cauchy problem for nonlinear Schr\"odinger equations. 
\emph{J. Math. Phys.} 18 (1977), no. 9, 1794--1797.

\bibitem{gril} Grillakis, Manoussos G. Regularity and asymptotic behaviour of the wave equation with a critical nonlinearity. 
\emph{Ann. of Math.} (2) 132 (1990), no. 3, 485--509.

\bibitem{hillr} Hillairet, Matthieu; Raphael, Pierre. Smooth type II blow-up solutions to the four-dimensional energy-critical wave equation. \emph{Anal. PDE} 5 (2012), no. 4, 777--829.

\bibitem{jend} Jendrej, Jacek. Construction of type II blow--up solutions for the energy--critical wave equation in dimension 5, \emph{Preprint}, 03 2015. 

\bibitem{jia} Jia, Hao; Kenig, Carlos E.  Asymptotic decomposition for semilinear wave and equivariant wave map equations. 
\emph{Preprint}, 03 2015. 

\bibitem{keel} Keel, Markus; Tao, Terence. Endpoint Strichartz estimates. \emph{Amer. J. Math.} 120 (1998), no. 5, 955--980.

\bibitem{kpv} Kenig, Carlos E.; Ponce, Gustavo; Vega, Luis. Well--posedness and scattering results for the generalized Korteweg--de Vries equation via the contraction principle. 
\emph{Comm. Pure Appl. Math.} 46 (1993), no. 4, 527--620. 

\bibitem{km06}  Kenig, Carlos E.; Merle, Frank. Global well--posedness, scattering and blow-up for the energy-critical, focusing,
non-linear Schr\"odinger equation in the radial case. \emph{Invent. Math.} 166 (2006), no. 3, 645--675.

\bibitem{km08} Kenig, Carlos E.; Merle, Frank. Global well-posedness, scattering and blow-up for the energy-critical focusing non-linear wave equation.
\emph{Acta Math.} 201 (2008), no. 2, 147--212.

\bibitem{km10} Kenig, Carlos E.; Merle, Frank. Scattering for $\dot H^{1/2}$ bounded solutions to the cubic, defocusing NLS in 3 dimensions. 
\emph{Trans. Amer. Math. Soc.} 362 (2010), no. 4, 1937--1962.

\bibitem{km11} Kenig, Carlos E.; Merle, Frank. Nondispersive radial solutions to energy 
supercritical non--linear wave equations, with applications. \emph{Amer. J. Math.} 133 (2011), no. 4, 1029--1065.

\bibitem{km112} Kenig, Carlos E.; Merle, Frank. Radial solutions to energy supercritical wave equations in odd dimensions. 
\emph{Discrete Contin. Dyn. Syst.} 31 (2011), no. 4, 1365--1381.

\bibitem{kill} Killip, Rowan; Stovall, Betsy; Visan, Monica. Blowup behaviour for the nonlinear Klein-Gordon equation. 
\emph{Math. Ann.} 358 (2014), no. 1-2, 289--350.

\bibitem{kill0} Killip, Rowan; Tao, Terence; Visan, Monica. The cubic nonlinear Schr\"odinger equation in 
two dimensions with radial data. \emph{J. Eur. Math. Soc.} (JEMS) 11 (2009), no. 6, 1203--1258. 

\bibitem{killnls1} Killip, Rowan; Visan, Monica. Energy--supercritical NLS: critical $\dot H^{s}$--bounds imply scattering. 
\emph{Comm. Partial Differential Equations} 35 (2010), no. 6, 945--987.

\bibitem{killnls2} The focusing energy--critical nonlinear Schr\"odinger equation in dimensions five and higher. 
\emph{Amer. J. Math.} 132 (2010), no. 2, 361--424.

\bibitem{kill1} Killip, Rowan; Visan, Monica. The defocusing energy-supercritical nonlinear wave equation in three space dimensions. 
\emph{Trans. Amer. Math. Soc.} 363 (2011), no. 7, 3893--3934.

\bibitem{kill2} Killip, Rowan; Visan, Monica. The radial defocusing energy-supercritical nonlinear wave equation in all space dimensions. 
\emph{Proc. Amer. Math. Soc.} 139 (2011), no. 5, 1805--1817. 

\bibitem{klls1}  Kenig, Carlos E.; Lawrie, Andrew; Liu, Baoping; Schlag, Wilhelm. Channels of energy for the linear radial wave equation.  \emph{Preprint}, 09 2014.

\bibitem{krs} Krieger, Joachim; Schlag, Wilhelm; Full range of blow--up exponents for the quintic wave equation in
three dimensions. \emph{Preprint}, 09 2012.

\bibitem{krst} Krieger, Joachim; Schlag, Wilhelm; Tataru, Daniel. Slow blow--up solutions for the $H^1(\R^3)$
critical focusing semilinear wave equation. \emph{Duke Math. J.} 147 (2009), no. 1, 1--53.

\bibitem{levine} Levine, Howard A. Instability and nonexistence of global solutions to nonlinear wave equations 
of the form $Pu_{tt}=-Au+{\mathcal F}(u)$. \emph{Trans. Amer. Math. Soc.} 192 (1974), 1--21.

\bibitem{lind} Lindblad, Hans; Sogge, Christopher D. On existence and scattering with minimal regularity for semilinear wave equations.
\emph{J. Funct. Anal.} 130 (1995), no. 2, 357--426.

\bibitem{murphy} 
Murphy, Jason. Intercritical NLS: critical $\dot{H}^{s}$-bounds 
imply scattering. \emph{SIAM J. Math. Anal.} 46 (2014), no. 1, 939--997.

\bibitem{ramos} Ramos, Javier. A refinement of the Strichartz inequality for the wave equation with applications. 
\emph{Adv. Math.} 230 (2012), no. 2, 649--698.

\bibitem{me} Rodriguez, Casey.  Profiles for the radial focusing energy-critical wave equation in odd dimensions. \emph{Preprint},
12 2014.

\bibitem{shen} Shen, Ruipeng. On the energy subcritical, nonlinear wave equation in $\R^3$ with radial data. \emph{Anal. PDE} 6 (2013), no. 8, 1929--1987.

\bibitem{stru} Struwe, Michael. Globally regular solutions to the $u^5$ Klein-Gordon equation. 
\emph{Ann. Scuola Norm. Sup. Pisa Cl. Sci.} (4) 15 (1988), no. 3, 495--513 (1989). 

\bibitem{tag} Taggart, Robert J. Inhomogeneous Strichartz estimates. \emph{Forum Math.} 22 (2010), no. 5, 825--853.

\bibitem{taobook} Tao, Terence. Nonlinear dispersive equations. Local and global analysis. 
\emph{CBMS Regional Conference Series in Mathematics, 106. Published for the Conference Board of the 
Mathematical Sciences, Washington, DC; by the American Mathematical Society, Providence, RI,} 2006. xvi+373 pp.

\bibitem{taosolo} Tao, Terence. A (concentration-)compact attractor for high-dimensional non-linear Schrödinger equations. 
\emph{Dyn. Partial Differ. Equ.} 4 (2007), no. 1, 1--53.

\bibitem{tao} Tao, Terence; Visan, Monica; Zhang, Xiaoyi. Global well-posedness and scattering for the defocusing mass-critical nonlinear Schr\"odinger equation for radial data in high dimensions. 
\emph{Duke Math. J.} 140 (2007), no. 1, 165--202.

\bibitem{tao2} Tao, Terence; Visan, Monica; Zhang, Xiaoyi. Minimal-mass blowup solutions of the mass-critical NLS.
\emph{Forum Math.} 20 (2008), no. 5, 881--919. 

\end{thebibliography}
\end{document}